\providecommand{\tabularnewline}{\\}
\newcommand{\lyxaddress}[1]{
	\par {\raggedright #1
	\vspace{1.4em}
	\noindent\par}
}
\theoremstyle{remark}
\newtheorem{rem}{\protect\remarkname}
\theoremstyle{definition}
\newtheorem{condition}{\protect\conditionname}
\theoremstyle{plain}
\newtheorem{assumption}{\protect\assumptionname}
\theoremstyle{definition}
 \newtheorem{example}{\protect\examplename}
\theoremstyle{plain}
\newtheorem{thm}{\protect\theoremname}
\theoremstyle{plain}
\newtheorem{cor}{\protect\corollaryname}
\theoremstyle{plain}
\newtheorem{lem}{\protect\lemmaname}
\theoremstyle{definition}
\newtheorem{defn}{\protect\definitionname}
\providecommand{\assumptionname}{Assumption}
\providecommand{\conditionname}{Condition}
\providecommand{\corollaryname}{Corollary}
\providecommand{\definitionname}{Definition}
\providecommand{\examplename}{Example}
\providecommand{\lemmaname}{Lemma}
\providecommand{\remarkname}{Remark}
\providecommand{\theoremname}{Theorem}
\begin{document}
\title{Subgeometric hypocoercivity for piecewise-deterministic Markov process
Monte Carlo methods}
\author{Christophe Andrieu\thanks{\protect\url{c.andrieu@bristol.ac.uk}}\and
Paul Dobson\thanks{\protect\url{p.dobson@tudelft.nl}}\and Andi Q.
Wang\thanks{\protect\url{andi.wang@bristol.ac.uk}}}
\maketitle

\lyxaddress{School of Mathematics, University of Bristol, United Kingdom.}

\lyxaddress{Delft Institute of Applied Mathematics, Delft University of Technology,
The Netherlands.}
\begin{abstract}
We extend the hypocoercivity framework for piecewise-deterministic
Markov process (PDMP) Monte Carlo established in \cite{Andrieu18}
to heavy-tailed target distributions, which exhibit subgeometric rates
of convergence to equilibrium. We make use of weak Poincaré inequalities,
as developed in the work of \cite{Grothaus19}, the ideas of which
we adapt to the PDMPs of interest. On the way we report largely potential-independent
approaches to bounding explicitly solutions of the Poisson equation
of the Langevin diffusion and its first and second derivatives, required
here to control various terms arising in the application of the hypocoercivity
result.
\end{abstract}

\section{Introduction}

\global\long\def\R{\mathbb{R}}%
\global\long\def\dif{\mathrm{d}}%

\global\long\def\X{\mathsf{X}}%
\global\long\def\E{\mathsf{E}}%
\global\long\def\V{\mathsf{V}}%
\global\long\def\F{\mathsf{\mathscr{F}}}%

\global\long\def\L{\mathcal{L}}%
\global\long\def\S{\mathcal{S}}%
\global\long\def\T{\mathcal{T}}%
\global\long\def\D{\mathrm{D}}%
\global\long\def\C{\mathsf{C}}%
\global\long\def\A{\mathcal{A}}%
\global\long\def\Ab{\mathcal{\bar{\A}}}%
\global\long\def\ctb{\mathsf{\mathrm{C}}_{{\rm b}}^{2}}%

\global\long\def\ELL{\mathrm{L}^{2}}%
\global\long\def\osc{\mathrm{osc}}%
\global\long\def\Id{\mathrm{Id}}%
\global\long\def\refr{\mathrm{ref}}%
\global\long\def\Tr{\mathrm{Tr}}%
\global\long\def\|{\mathcal{\lVert}}%

\global\long\def\Rv{\mathcal{R}_{v}}%
\global\long\def\bounce{\mathcal{B}}%

In this work, we study piecewise-deterministic Markov processes (PDMPs)
which are used in the context of Monte Carlo inference to draw samples
from some given target density $\pi$ on $\R^{d}$, for instance in
Bayesian computation. Notable examples of such processes are the Zig-zag
process of \cite{Bierkens19} and the Bouncy Particle Sampler of \cite{BouchardCote18}.
PDMPs have gained attention within the field of Markov chain Monte
Carlo (MCMC) because such methods depart significantly from traditional
reversible MCMC approaches based on Metropolis--Hastings. These processes
are constructed to be nonreversible, intuitively enabling persistent
exploration within the state space, rather than the diffusive exploration
characteristic of reversible schemes.

However, this nonreversiblility also introduces mathematical difficulties
when analysing the theoretical properties of the resulting algorithms,
such as rates of convergence. Traditional methods based on spectral
theory for self-adjoint operators in Hilbert spaces can no longer
be applied, and furthermore the underlying operators which define
the process tend to be non-coercive: the symmetric component of the
operator has a nontrivial kernel. This implies that one cannot expect
straightforward geometric convergence of the semigroup $(P_{t})$,
in the sense that there exists some $\rho>0$, such that for appropriate
functions $f$, 
\begin{equation}
\| P_{t}f\|\le C\| f\|\xi(t),\quad\forall t\ge0,\label{eq:exp_conv}
\end{equation}
for some $C\le1$, $\xi(t)=\exp(-\rho t)$ for $t\ge0$ and an appropriate
norm $\|\cdot\|$. In order to understand degenerate dynamics, the
hypocoercivity framework has been developed, following the approach
of \cite{Dolbeault15}.

This framework was first applied to PDMPs in \cite{Andrieu18}, where
exponential convergence of the semigroups was proven as in (\ref{eq:exp_conv})
when the target density $\pi$ satisfies a Poincaré inequality: for
some constant $C_{\mathrm{P}}>0$,
\begin{equation}
\|\nabla f\|_{2}^{2}\ge C_{\mathrm{P}}\| f\|_{2}^{2},\label{eq:poincare_intro}
\end{equation}
for suitably differentiable functions $f\in\ELL(\pi)$, with $\int f\,\dif\pi=0$,
where $\|\cdot\|_{2}$ is the norm in $\ELL(\pi)$. The authors were
able to conclude that (\ref{eq:exp_conv}) holds for such targets,
with an exponential rate function $\xi$, and for some constant $C>1$.

The goal of this work is to extend the hypocoercivity results of \cite{Andrieu18}
to targets $\pi$ which do not possess a Poincaré inequality (\ref{eq:poincare_intro}),
but instead possess a weak Poincaré inequality of the form
\[
\| f\|_{2}^{2}\le\alpha(r)\|\nabla f\|_{2}^{2}+r\Psi(f),\quad\forall r>0,
\]
where $\alpha:(0,\infty)\to[1,\infty)$ is a decreasing function,
typically divergent as $r\downarrow0$, and $\Psi$ is an appropriate
functional. This encompasses target distributions which possess subgeometric
tail decay, and are typically referred to as `heavy-tailed'. To do
this, we will utilize the approach of \cite{Grothaus19}, where such
inequalities were studied for degenerate diffusions. Our main abstract
result will be a convergence result of the form (\ref{eq:exp_conv})
where the rate function $\xi$ is in fact subgeometric. 

As a concrete application, our bounds on the semigroup of the form
(\ref{eq:exp_conv}) will allow us to check conditions which ensure
that a central limit theorem holds for (appropriately scaled) ergodic
averages of the process.

\subsection{Contribution}

In this subsection, we carefully describe our contributions in relation
to the literature, particularly references \cite{Andrieu18,Grothaus19}
and \cite{Grothaus14sti}. Readers interested in our actual results
are encouraged to move on to the following subsection where we define
our notation, or to Section \ref{sec:PDMP-assumptions} for our assumptions,
Section \ref{sec:Abstract-result} for our abstract result, Section
\ref{sec:Application-to-PDMPs} for our result for PDMPs, or to Section
\ref{sec:Examples} where we compare our bounds theoretically and
empirically on illustrative examples.

In the present manuscript, we work in and extend the general framework
laid out in \cite{Andrieu18} for PDMPs. The analysis carried out
in \cite{Andrieu18} crucially relied on the existence of a (strong)
Poincaré inequality (\ref{eq:poincare_intro}), which enabled the
authors to establish geometric convergence of the semigroup (\ref{eq:exp_conv}).
These results rely themselves on the framework proposed by \cite{Dolbeault15}
for which the first rigorous proof was established in \cite{Grothaus14sti}
and whose results were adapted to take into account technical specificities
of PDMPs in \cite{Andrieu18}. Our work aims to combine the framework
recently proposed in \cite{Grothaus19} to tackle scenarios where
application of the ideas of \cite{Dolbeault15,Grothaus14sti} is sought,
but only a weak form of the Poincaré inequality is satisfied, and
\cite{Andrieu18} which takes into account PDMP idiosyncrasies.

More specifically, while our abstract assumptions and the resulting
theorem and its proof (see Sections \ref{sec:PDMP-assumptions}, \ref{sec:Abstract-result})
may superficially appear very similar to those of \cite{Grothaus19},
we were not able to straightforwardly apply their results and have
adapted them following \cite{Andrieu18}. This disparity fundamentally
arises from the differences in the how the corresponding processes
arise. For PDMPs, as in the present work and in \cite{Andrieu18},
the initial point of departure is an explicit construction of the
process, in terms of the deterministic dynamics and the switching
mechanism, driven by an inhomogeneous Poisson process. The infinitesimal
generator is a by-product, and its closure not sufficiently tractable
to work with. On the other hand, for diffusion processes as in \cite{Grothaus19},
one can begin with an appropriate differential operator, the putative
infinitesimal generator, take the closure, and then via the standard
operator-theoretic machinery define the ensuing semigroup and stochastic
process. The key technical differences arise on the level of checking
the closure of certain operators and we comment on this point in relevant
places in the text. 

In relation to the actual results of \cite{Grothaus19}, by specialising
to our particular PDMP setting we are also able to obtain slightly
better constants in the decay of the semigroup. In relation to \cite{Andrieu18},
by leveraging the powerful results of \cite{Lorenzi06}, we establish
(quasi) potential-independent approaches to bounding the difficult
cross terms arising in the application of the hypocoercivity result,
which depend on smoothness estimates of the solution of the Poisson
equation for the Langevin diffusion process.

The recent work \cite{Lu2020} also studies geometric convergence
of PDMP semigroups. Their work also crucially relies on a (strong)
Poincaré inequality, and their Assumption 3 typically holds when the
potential $U(x)=-\log\pi(x)$ grows at a superlinear rate in $|x|$.
Thus the framework of \cite{Lu2020} cannot currently be applied to
heavy-tailed targets.

Finally, we briefly mention some other related work. For an accessible
introduction to hypocoercivity, we recommend \cite{Achleitner2019},
which focusses on the finite-dimensional ODE setting. In \cite{Deligiannidis2018},
hypocoercivity techniques were used to study randomized Hamiltonian
Monte Carlo (RHMC) and derive dimension-free exponential convergence
rates. For connections between hypocoercivity and convergence proofs
based on Lyapunov functions, see \cite{Monmarche2019}. For a broad
and recent review of the current literature on hypocoercivity, see
\cite{Bernard2020}.

\subsection{Notation}
\begin{itemize}
\item $\lvert\cdot\rvert$ denotes the Euclidean norm on $\R^{d}$ and for
$v,w\in\R^{d}$, $\langle v,w\rangle=v^{\top}w$ is the associated
inner product where $v^{\top}$ is the transpose of $v$. 
\item $\mathrm{I}_{d}$ denotes the $d\times d$ identity matrix. 
\item For a vector $w\in\R^{d}$ we will write $w_{i}$, $i=1,\dots,d$,
for its coordinates with respect to the standard basis.
\item For $R>0$, $B_{R}:=\{x\in\R^{d}\colon|x|\leq R\}$. 
\item For any $s\in\R$, $(s)_{+}:=\max\{s,0\}$ denotes the positive part.
\item For $A$ a set, $\mathbb{I}A$ is the associated indicator or characteristic
function.
\item For a smooth manifold $\mathsf{M}$ and $k\in\mathbb{N}\cup\{\infty\}$,
$\mathrm{C}^{k}(\mathsf{M},\R^{m})$ denotes the set of $k$-times
continuously differentiable functions $f:\mathsf{M}\to\mathbb{R}^{m}$.
$\mathsf{\mathrm{C}}^{k}(\mathsf{M})$ denotes $\mathrm{C}^{k}(\mathsf{M},\R)$.
$\mathsf{\mathrm{C}}_{b}^{k}(\mathsf{M})$ denotes functions in $\mathsf{\mathrm{C}}^{k}(\mathsf{M})$
which are in addition bounded and have bounded derivatives up to order
$k$. A subscript $\mathsf{\mathrm{C}}_{\mathrm{c}}^{k}(\mathsf{M})$
denotes functions in $\mathsf{\mathrm{C}}^{k}(\mathsf{M})$ which
are compactly supported.
\item For $\alpha\in(0,1)$, $\mathrm{C}^{k+\alpha}(\mathsf{M},\R^{m})$
denotes the set of $k$-times continuously differentiable functions
with locally $\alpha$-Hölder continuous $k$-th derivative.
\item For $f\in\mathsf{\mathrm{C}}^{k}(\mathsf{M})$, $i\in\{1,2,\dots,d\}$,
$x\mapsto\partial_{i}f(x)$ denotes the partial derivative of $f$
with respect to the $i$th coordinate, for $k\ge1$, and analogously
for $i,j\in\{1,2,\dots,d\}$, $\partial_{i,j}f$ denotes $\partial_{i}\partial_{j}f$,
for $k\ge2$.
\item For a function $f\in\mathrm{C}^{2}(\mathsf{X})$, $\nabla^{2}f$ denotes
the Hessian matrix of second-order partial derivatives. For $f\in\mathrm{C}^{2}(\mathsf{X})$,
$\Delta_{x}f:=\sum_{i=1}^{d}\partial_{i,i}f$ denotes the Laplacian.
\item For $f=(f_{1},\dots f_{m})\in\mathrm{C}^{k}(\mathsf{M},\R^{m})$,
$\nabla_{x}f$ is the gradient of $f$, defined at any $x\in\mathsf{M}$
by $\nabla_{x}f(x)=(\partial_{j}f_{i}(x))_{i\in\{1,\dots,m\},j\in\{1,\dots,d\}}\in\R^{d\times m}$. 
\item For any measurable space $(\mathsf{M},\mathcal{F})$ with probability
measure $\mathrm{m}$, we let $\ELL(\mathrm{m})$ be the Hilbert space
of real measurable functions $f$ with $\int_{\mathsf{M}}|f|^{2}\dif\mathrm{m}<\infty$,
with inner product $\langle f,g\rangle_{2}=\int_{\mathsf{M}}fg\:\dif\mathrm{m}$
and corresponding norm $\|\cdot\|_{2}$. When there is ambiguity we
may also write $\langle f,g\rangle_{\ELL(\mathrm{m})}$, $\|\cdot\|_{\ELL(\mathrm{m})}$
or $\langle f,g\rangle_{\mathrm{m}}$, $\|\cdot\|_{\mathrm{m}}$.
We use the same notation for $\langle F,G\rangle_{\mathrm{m}}=\int_{\mathsf{M}}{\rm Tr}\big(F^{\top}G\big)\dif\mathrm{m}$
with $F,G\colon\mathsf{M}\rightarrow\mathbb{R}^{d}$.
\item $\Id:\ELL(\mathrm{m})\to\ELL(\mathrm{m})$ denotes the identity mapping,
$f\mapsto f$.
\item We let $\mathrm{H}^{1}(\mathsf{M},\R^{m},{\rm m})=\{g\in\mathsf{\mathrm{C}}^{1}(\mathsf{M},\R^{m})\cap\ELL({\rm m})\colon\|\nabla_{x}g\|_{2}<\infty\}$,
$\mathrm{H}^{1}(\mathsf{M}):=\mathrm{H}^{1}(\mathsf{M},\R,{\rm m})$
and $\mathrm{H}^{2}(\mathsf{M}):=\{g\in\mathrm{H}^{1}(\mathsf{M})\cap\mathsf{\mathrm{C}}^{2}(\mathsf{M})\colon\|\nabla_{x}^{2}g\|_{2}<\infty\}$.
Note that we use the notation normally associated with Sobolev spaces,
but our derivatives are not weak derivatives.
\item For a measurable function $f:\mathsf{M}\to\R$, let $\| f\|_{\osc}:=\mathrm{ess_{\mathrm{m}}}\sup f-\mathrm{ess}_{\mathrm{m}}\inf f$.
\item Let $\mathrm{L}_{\mathrm{m}}^{2}(\mathsf{M};\R^{d})$ be the space
of functions $f:\mathsf{M}\to\R^{d}$ with $\lVert h\rVert_{\mathrm{L}_{\mathrm{m}}^{2}(\mathsf{M};\R^{d})}^{2}<\infty$,
endowed with the norm
\end{itemize}
\[
\lVert h\rVert_{\mathrm{L}_{\mathrm{m}}^{2}(\mathsf{M};\R^{d})}^{2}=\int_{\mathsf{M}}\lvert h(x)\rvert^{2}\dif\mathrm{m}(x).
\]

\begin{itemize}
\item $\mathrm{L}^{\infty}(\mathrm{m})$ will denote the Banach space of
(equivalence classes of) measurable functions $f:\mathsf{M}\to\R$
with $\mathrm{ess_{\mathrm{m}}}\sup|f|<\infty$.
\item For an unbounded operator $(\A,\D(\A))$, we let $\mathrm{Ran}(\A):=\{\A f:f\in\D(\A)\}$
and $\mathrm{Ker}(\A):=\{f\in\D(\A):\A f=0\}$. If $\A$ is closable,
we let $\bar{\A}$ denote its closure.
\end{itemize}

\subsection{PDMP notation}

We summarize here our PDMP notation; for the underlying assumptions,
see Section \ref{sec:PDMP-assumptions}. Given potential $U:\X\to\R$,
we will denote the target distribution of interest by $\pi=\mathrm{e}^{-U}/\int_{\X}\mathrm{e}^{-U(y)}\dif y$
on $\X=\R^{d}$ equipped with its Borel $\sigma$-algebra. $\V\subset\R^{d}$
is a closed subset and we have a probability measure $\nu$ defined
on $\V$ equipped with its Borel $\sigma$-algebra $\mathcal{V}$.
Then set $\E=\X\times\V$ and define the augmented probability measure
$\mu=\pi\otimes\nu$. We will be working with PDMPs whose generators
are of the form, for $f\in\mathrm{C}_{b}^{2}(\E)$, $(x,v)\in\E$,
\begin{align}
\L f(x,v) & =v^{\top}\nabla_{x}f(x,v)+\sum_{k=1}^{K}\lambda_{k}(x,v)[(\bounce_{k}-\Id)f](x,v)+m_{2}^{1/2}\lambda_{\refr}\Rv f(x,v).\label{eq:def-calL}
\end{align}
Here $\Rv$ is the refreshment operator, given for any $f\in\ELL(\mu)$,
by
\[
\Rv f:=\Pi_{v}f-f\text{ with }\Pi_{v}f(x,v):=\int_{\V}f(x,w)\:\dif\mathsf{\nu}(w).
\]
For a sequence of continuous vector fields $F_{k}:\X\to\R^{d}$, $k\in\{1,2,\dots,K\}$,
such that $\nabla_{x}U=\sum_{k=1}^{K}F_{k}$, we now define the corresponding
bounce operators $\bounce_{k}$. For each $k\in\{1,2,\dots,K\}$,
$x\in\X$ set 
\begin{equation}
\mathrm{n}_{k}(x)=\begin{cases}
F_{k}(x)/|F_{k}(x)| & |F_{k}(x)|\neq0,\\
0 & \mathrm{else},
\end{cases}\label{eq:nkx}
\end{equation}
then set for any $f:\E\to\R$, $(x,v)\in\E$,
\[
\bounce_{k}f(x,v)=f\left(x,v-2(v^{\top}\mathrm{n}_{k}(x))\mathrm{n}_{k}(x)\right).
\]
The intensity $\lambda_{k}(x,v)$ has an explicit form, depending
on the dynamics, which ensures invariance of $\mu$. In particular,
we require that $\lambda_{k}(x,v)-\lambda_{k}(x,-v)=v^{\top}F_{k}(x)$,
which is a necessary condition for $\mu$ to be an invariant measure
(see our Assumption \ref{assu:event_rate}). Finally
\[
m_{2}:=\int_{\V}v_{1}^{2}\:\dif\nu(v),
\]
assumed to be finite. 
\begin{rem}
The PDMPs considered in \cite{Andrieu18} are slightly more general
as they include the possibility of non-linear drift, that is the first
order derivative term

\[
v^{\top}\nabla_{x}f(x,v)-F_{0}(x)^{\top}\nabla_{v}f(x,v),
\]
This includes important examples such as randomized HMC \cite{BouRabee}
and the Boomerang Sampler \cite{Bierkens20}; to simplify the expressions
we have not included this term but the results hold in this more general
setting also. More complex refreshment operators can also be considered;
see \cite{Andrieu18}.
\end{rem}

\section{PDMP assumptions}

\label{sec:PDMP-assumptions}

In what follows, `Conditions' will refer to the conditions needed
for the abstract hypocoercivity result to hold; these are inspired
by \cite{Grothaus19}. `Assumptions' will refer to the assumptions
made on our PDMP process, which we will show imply that the Conditions
hold. We first give a basic condition, following \cite{Andrieu18}.
\begin{condition}
\label{cond:basic-core}We have:
\begin{enumerate}
\item The operator $(\mathcal{L},\D(\L))$ is closed in $\ELL(\mu)$ and
generates a strongly continuous contraction semigroup $(P_{t})_{t\ge0}$
on $\ELL(\mu)$.
\item $\mu$ is a stationary measure for $(P_{t})_{t\ge0}$.
\item There exists a core $\mathsf{C}$ for $\mathcal{L}$ such that $\mathsf{C}$
is dense in $\ELL(\mu)$ and $\mathsf{C}\subset\mathrm{D}(\mathcal{L})\cap\mathrm{D}(\mathcal{L}^{*})$,
where $(\mathcal{L}^{*},\mathrm{D}(\mathcal{L}^{*}))$ is the adjoint
of $\mathcal{L}$ on $\ELL(\mu)$.
\end{enumerate}
\end{condition}
We now give the assumptions on the potential $U$.
\begin{assumption}
\label{assu:U}The potential $U$ is such that,
\begin{enumerate}
\item \label{enu:UisC2plusalpha}$U\in\mathrm{C}^{2+\alpha}(\mathsf{X})$
for some $\alpha\in(0,1)$;
\item \label{enu:HessianUlowerbounded}there exists a constant $c_{U}\ge0$
such that for each $x\in\X$, $\nabla_{x}^{2}U(x)\succeq-c_{U}\mathrm{I}_{d}$
in the sense of definiteness of matrices;
\item \label{enu:nablaU-summable}either of the following holds:
\begin{enumerate}
\item $\nabla_{x}U$ is bounded, 
\item \label{enu:cond-laplacian-U} $\|\nabla_{x}U\|_{\pi}<\infty$ and
for some $C_{U}>0$ and $\omega\ge0$,
\begin{equation}
\Delta_{x}U(x)\leq C_{U}d^{1+\omega}+\lvert\nabla_{x}U(x)\rvert^{2}/2\,\text{ for all }\,x\in\X.\label{eq:boundonU}
\end{equation}
\end{enumerate}
\end{enumerate}
\end{assumption}
\noindent We remark that this is where our present work diverges
substantially from \cite{Andrieu18}. Since we are interested in studying
subgeometric rates of convergence, we do not assume a Poincaré inequality
here as in \cite{Andrieu18}: instead, we will later assume a weak
Poincaré inequality.
\begin{example}
\label{exa:targets}We shall consider two different heavy-tailed distributions:
\begin{enumerate}
\item Set $U(x)=\frac{1}{2}(d+p)\log\left(1+|x|^{2}\right)$, for some $p>0$,
in this case
\[
\pi(\dif x)=\frac{Z_{d,p}}{(1+|x|)^{d+p}}\dif x
\]
for some normalising constant $Z_{d,p}$;
\item Set $U(x)=\sigma|x|{}^{\delta}$, some $\sigma>0$ and $0<\delta<1$,
then 
\[
\pi(\dif x)=Z_{\sigma,\delta}e^{-\sigma|x|{}^{\delta}}\dif x
\]
 for some nomalising constant $Z_{\sigma,\delta}$.
\end{enumerate}
\end{example}
In both of these cases Assumption \ref{assu:U} holds since $\nabla_{x}U$
and $\nabla_{x}^{2}U$ are both bounded. Both of these examples have
subexponential decay but satisfy a weak Poincaré inequality as we
show in Example \ref{exa:targetscont} using the results of \cite{Rockner01}.
We assume the following assumptions on the vector fields, as in \cite{Andrieu18}.
\begin{assumption}
\label{assu:F_k}The family of vector fields $\{F_{k}:\X\to\R^{d};k\in\{0,1,,\dots,K\}\}$
satisfies:
\begin{enumerate}
\item for $k\in\{0,1,\dots,K\}$, $F_{k}\in\mathsf{\mathrm{C}}^{2}(\mathsf{X},\R^{d})$;
\item for all $x\in\X$, $\nabla_{x}U(x)=\sum_{k=1}^{K}F_{k}(x)$;
\item \label{enu:2(c)for-all-,}for all $k\in\{0,1,\dots,K\}$, there exists
$a_{k}\ge0$ such that for all $x\in\X$,
\[
|F_{k}(x)|\le a_{k}\{1+|\nabla_{x}U(x)|\}.
\]
\end{enumerate}
\end{assumption}
Following \cite{Andrieu18}, we make the following assumption on the
event rate.
\begin{assumption}
The events rates are given by $\lambda_{k}(x,v)=\varphi\left(v^{\top}F_{k}(x)\right)$,
for each $(x,v)\in\E$ and $k=1,2,\dots,K$, where $\varphi:\R\to\R_{+}$
is a continuous function satisfying for any $s\in\R$,
\[
\varphi(s)-\varphi(-s)=s,\quad|s|\le\varphi(s)+\varphi(-s)\le c_{\varphi}m_{2}^{1/2}+C_{\varphi}|s|,
\]
for some constants $c_{\varphi}\ge0,C_{\varphi}\ge1$.\label{assu:event_rate}
\end{assumption}
This assumption allows the canonical choices of rates, $\varphi(s)=(s)_{+}$
as well as smooth versions as in \cite{Andrieu19}, such as $\varphi=-\log\left(\phi(\exp(-s))\right)$,
for $\phi(r)=r/(1+r)$ or arbitrarily precise uniform approximation
of canonical rates.
\begin{example}
Many standard PDMP algorithms satisfy these assumptions:
\begin{enumerate}
\item Let $K=d$ and for $k\in\left\{ 1,\ldots,d\right\} ,x\in\X,F_{k}(x)=\partial_{k}U(x)e_{k}$,
where $e_{k}$ is the canonical basis, then we have the Zig-Zag process
\cite{Bierkens19}.
\item The choice $K=1$ and $F_{1}=\nabla_{x}U$ gives the Bouncy Particle
Sampler \cite{BouchardCote18}.
\end{enumerate}
\end{example}
Now we give assumptions on $\V$ and $\nu$:
\begin{assumption}
\label{assu:V_nu}We assume the following.
\begin{enumerate}
\item \label{enu:4(a)-is-stable}$\V$ is stable under bounces, i.e. for
all $(x,v)\in\E$ and $k\in\{1,\ldots,K\}$, $v-2(v^{\top}\mathrm{n}_{k}(x))\mathrm{n}_{k}(x)\in\V,$
where $\mathrm{n}_{k}(x)$ is defined by (\ref{eq:nkx});
\item \label{enu:4(b)For-any-}For any $A\subseteq\V$ Borel measurable,
$x\in\X$, we have $\nu((\Id-2\mathrm{n}_{k}(x)\mathrm{n}_{k}(x)^{\top})A)=\nu(A)$,
for any $k\in\{1,\ldots,K\}$.
\item \label{enu:4(c)For-any-bounded}For any bounded and measurable function
$g:\R^{2}\to\R$, $i,j\in\{1,\ldots,d\}$ such that $i\neq j$, $\int_{\V}g(v_{i},v_{j})\,\dif\nu(v)=\int_{\V}g(-v_{i},v_{j})\,\dif\nu(v)$;
\item \label{enu:4(d)-has-finite}$\nu$ has finite fourth order marginal
moment,
\[
m_{4}:=(1/3)\| v_{1}^{2}\|_{2}^{2}=(1/3)\int_{\V}v_{1}^{4}\:\dif\nu(v)<\infty,
\]
and for any $i,j,k,l\in\{1,2,\dots,d\}$ such that $\int_{\V}v_{i}v_{j}v_{k}v_{l}\:\dif\nu(v)=0$
whenever $\mathrm{card}(\{i,j,k,l\})>2$.
\item \label{enu:4(e)Assume-that-.}Assume that $m_{2}\ge1$.
\end{enumerate}
The last condition is purely technical and allows for simpler expressions
in Theorem \ref{thm:abstracthypo}.
\end{assumption}
\noindent We note that this assumption precludes the use of heavy-tailed
distributions for the velocity component $v$. By the discussion after
\cite[H4]{Andrieu18} if $\nu$ is rotation invariant then Assumption
\ref{assu:V_nu}-\ref{enu:4(a)-is-stable}-\ref{enu:4(b)For-any-}-\ref{enu:4(c)For-any-bounded}
are satisfied.  Assumption \ref{assu:V_nu}-\ref{enu:4(b)For-any-}
above implies also that
\[
m_{2,2}:=\| v_{1}v_{2}\|_{2}^{2}=\int_{\V}v_{1}^{2}v_{2}^{2}\:\dif\nu(v)<\infty.
\]

\begin{assumption}
\label{assu:refreshment}The refreshment mechanism is given by
\[
\Rv=\Pi_{v}-\Id.
\]
\end{assumption}
\begin{assumption}
The refreshment rate $\lambda_{\refr}:\X\to\R$ is bounded from below
and above as follows: there exist $\underline{\lambda}>0,c_{\lambda}\ge0$
such that for each $x\in\X$, \label{assu:The-refreshment-rate}
\[
0<\underline{\lambda}\le\lambda_{\refr}(x)\le\underline{\lambda}(1+c_{\lambda}|\nabla_{x}U(x)|).
\]
\end{assumption}

\section{Abstract result}

\label{sec:Abstract-result}We will decompose our operator $\mathcal{L}$
into symmetric and antisymmetric parts, $\mathcal{L}=\mathcal{S}+\mathcal{T}$
on $\C$, as in Condition \ref{cond:basic-core}, where 
\begin{equation}
\mathcal{S}:=(\mathcal{L}+\mathcal{L}^{*})/2,\quad\T:=(\L-\L^{*})/2,\quad\D(\S)=\D(\T)=\C.\label{eq:S_T_def}
\end{equation}
We remark that while our upcoming abstract result, Theorem \ref{thm:abstracthypo},
closely resembles Theorem 2.1 of \cite{Grothaus19}, our definitions
of the abstract operators $\S,\T$ are given by (\ref{eq:S_T_def})
above, which follows the approach of \cite{Andrieu18} instead. In
our PDMP setting we require this approach in order to explicitly identify
the operators $\S,\T$ in Section \ref{sec:Application-to-PDMPs}
and define intermediate quantities and their properties below. By
contrast, in the diffusion setting of \cite{Grothaus19}, the authors
are able to employ It\textroundcap{o}'s Formula to identify their
corresponding symmetric and antisymmetric operators. Thus we cannot
simply use the approach and Theorem 2.1 of \cite{Grothaus19} directly,
but we combine the two approaches of \cite{Andrieu18,Grothaus19}.
\begin{condition}
$\Pi_{v}\C\subset\C$. \label{cond:closable} 
\end{condition}
\noindent As in \cite{Andrieu18} we note that under Conditions \ref{cond:basic-core}
and \ref{cond:closable}, $\T\Pi_{v}$ is closable, with closure $\left(\overline{\T\Pi_{v}},\D\big(\overline{\T\Pi_{v}}\big)\right)$;
this follows from the fact that $\T$ is antisymmetric and $\C$ is
dense as shown in the proof of Lemma \ref{lem:weakPoincaretoAPoincare}.
This allows us to define, by \cite[Theorem 5.1.9]{Pedersen}, 
\[
\A:=\left(m_{2}\mathrm{Id}+(\T\Pi_{v})^{*}(\overline{\T\Pi_{v}})\right)^{-1}(-\T\Pi_{v})^{*},\quad\D(\A)=\D\big((\T\Pi_{v})^{*}\big).
\]
As detailed in \cite[Lemma 3]{Andrieu18} $\A$ is closable with bounded
closure (on $\ELL(\mu)$), and we denote its closure by $\overline{\A}$
hereafter. This is different from \cite{Grothaus14sti,Grothaus19}
where instead it is assumed that $\T$ is closed, or closable, leading
to a definition of $\A$ either involving $\T\Pi_{v}$, or $\bar{\T}\Pi_{v}$,
and for which we could not establish key intermediate results, given
the level of current understanding of PDMPs of the type considered
in this manuscript.
\begin{condition}
\label{cond:proj}We have:
\begin{enumerate}
\item \label{enu:3(a)}$\mathrm{Ran}(\Pi_{v})\subset\mathrm{Ker}(\S^{*})$.
\item \label{enu:3(b)For-any-,}For any $f\in\C$, $\Pi_{v}\T\Pi_{v}f=0$.
\end{enumerate}
\end{condition}
\begin{condition}
\label{cond:R0-rem}There exists some $R_{0}\ge1$ such that for any
$f\in\C$,
\[
\left|\langle\bar{\mathcal{A}}\mathcal{T}(\mathrm{Id}-\Pi_{v})f,f\rangle_{2}+\langle\bar{\mathcal{A}}\mathcal{S}f,f\rangle_{2}\right|\le R_{0}\|(\mathrm{Id}-\Pi_{v})f\|_{2}\|\Pi_{v}f\|_{2}.
\]
\end{condition}
\noindent We state the assumptions on the functional $\Psi$ which
will appear in our weak Poincaré inequalities.
\begin{condition}
\label{cond:Psi}We have a functional $\Psi:\ELL(\mu)\to[0,\infty]$
such that the set $\{f\in\ELL(\mu):\Psi(f)<\infty\}$ is dense in
$\ELL(\mu)$. For any $f\in\D(\L)$ there exists a sequence $\{f_{n}\}_{n=1}^{\infty}\subset\C$
such that $f_{n}\to f$ in $\ELL(\mu)$ and 
\begin{equation}
\limsup_{n\to\infty}\langle-\L f_{n},f_{n}\rangle_{2}\le\langle-\L f,f\rangle_{2},\quad\limsup_{n\to\infty}\Psi(f_{n})\le\Psi(f).\label{eq:Psi_limsup}
\end{equation}
Further, setting $G:=(\T\Pi_{v})^{*}\overline{\T\Pi_{v}}$ with $\D(G)=\{f\in\D(\overline{\T\Pi_{v}}):\overline{\T\Pi_{v}}f\in\D((\T\Pi_{v})^{*})\}$,
we also assume that $\Psi$ satisfies for each $f\in\ELL(\mu)$ and
$t\geq0$, 
\begin{equation}
\Psi(P_{t}f)\le\Psi(f),\Psi(\mathrm{e}^{-tG}f)\le\Psi(f)\text{ and }\Psi(\Pi_{v}f)\le\Psi(f).\label{eq:Psi_ineqs}
\end{equation}
\noindent We now state the required weak Poincaré inequalities.
\end{condition}
\begin{condition}
Assume we have the following weak Poincaré inequalities: for some
decreasing functions $\alpha_{1},\alpha_{2}:(0,\infty)\to[1,\infty)$,\label{cond:weak_poincare}
\begin{equation}
\|\Pi_{v}f-\mu(f)\|_{2}^{2}\le\alpha_{1}(r)\|\T\Pi_{v}f\|_{2}^{2}+r\Psi(\Pi_{v}f),\quad\forall f\in\D(\T\Pi_{v}),r>0,\label{eq:wp_1}
\end{equation}
\begin{equation}
\|(\Id-\Pi_{v})f\|_{2}^{2}\le\alpha_{2}(r)\langle-\S f,f\rangle_{2}+r\Psi(f),\quad\forall f\in\C,r>0.\label{eq:wp_2}
\end{equation}
\end{condition}
The following abstract result is inspired by Theorem 2.1 of \cite{Grothaus19}
and Theorem 4 of \cite{Andrieu18}.
\begin{thm}
\label{thm:abstracthypo}Assume Conditions \ref{cond:basic-core},
\ref{cond:closable}, \ref{cond:proj}, \ref{cond:R0-rem}, \ref{cond:Psi},
\ref{cond:weak_poincare}. Then there exist constants $c_{1},c_{2}>0$
such that
\begin{equation}
\| P_{t}f-\mu(f)\|_{2}^{2}\le\xi(t)\left(\| f\|_{2}^{2}+\Psi(f)\right),\quad\forall t\ge0,f\in\D(\L),\label{eq:Pt_exp}
\end{equation}
for 
\begin{equation}
\xi(t):=c_{1}\inf\left\{ r>0:c_{2}t\ge\alpha_{1}(r)^{2}\alpha_{2}\left(\frac{r}{\alpha_{1}(r)^{2}}\right)\log\frac{1}{r}\right\} .\label{eq:xi_t}
\end{equation}
Expressions for $c_{1}$ and $c_{2}$ are given in (\ref{eq:expression_c1_c2}). 
\end{thm}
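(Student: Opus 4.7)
The strategy follows the Dolbeault--Mouhot--Schmeiser paradigm as adapted to PDMPs in \cite{Andrieu18}, but with the strong Poincar\'e inequality replaced by the pair of weak inequalities in Condition~\ref{cond:weak_poincare}. Assuming without loss of generality that $\mu(f)=0$, I would first introduce the modified energy
\[
\Phi_{\varepsilon}(f) := \tfrac{1}{2}\| f\|_{2}^{2} + \varepsilon\,\langle \bar{\A} f, f\rangle_{2}
\]
for a parameter $\varepsilon > 0$ to be chosen small; since $\bar{\A}$ is bounded on $\ELL(\mu)$ by \cite[Lemma 3]{Andrieu18}, this functional is two-sidedly equivalent to $\|f\|_{2}^{2}$. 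For $f \in \C$ I would differentiate $\Phi_{\varepsilon}(P_{t}f)$ along the semigroup and decompose via $\L = \S + \T$ and $\Id = \Pi_{v} + (\Id - \Pi_{v})$, invoking Condition~\ref{cond:proj}. This leaves a dissipative contribution $\langle \S P_{t}f, P_{t}f\rangle_{2}$, a coercive contribution $\varepsilon\langle -\bar{\A}\T\Pi_{v} P_{t}f, P_{t}f\rangle_{2}$ which by the very definition of $\A$ controls $\|\overline{\T\Pi_{v}} P_{t}f\|_{2}^{2}$ (up to a factor depending on $m_{2}$), and residual cross terms $\varepsilon\bigl(\langle \bar{\A}\T(\Id-\Pi_{v})P_{t}f, P_{t}f\rangle_{2} + \langle \bar{\A}\S P_{t}f, P_{t}f\rangle_{2}\bigr)$ that are precisely the quantity bounded by $R_{0}$ in Condition~\ref{cond:R0-rem}. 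A single Young's inequality absorbs the latter into the dissipation for $\varepsilon$ small enough, yielding
\[
\frac{d}{dt}\Phi_{\varepsilon}(P_{t}f) \le -\kappa_{\S}\langle -\S P_{t}f, P_{t}f\rangle_{2} - \kappa_{\T}\|\overline{\T\Pi_{v}} P_{t}f\|_{2}^{2}
\]
on $\C$, for explicit positive constants $\kappa_{\S}, \kappa_{\T}$.

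Next I would extend the integrated form of this inequality from $\C$ to $\D(\L)$ by using the approximating sequence $\{f_{n}\}\subset\C$ supplied by Condition~\ref{cond:Psi}: the $\limsup$ bounds (\ref{eq:Psi_limsup}), the strong continuity of $(P_{t})$ on $\ELL(\mu)$, and the $\Psi$-monotonicity (\ref{eq:Psi_ineqs}) together allow all three terms above to pass to the limit for $f\in\D(\L)$.

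To turn this into a rate, I would apply the second weak Poincar\'e inequality (\ref{eq:wp_2}) to $g := P_{t}f$ with parameter $r$ and the first (\ref{eq:wp_1}) to $\Pi_{v}g$ with parameter $s$; balancing via the choice $s = r/\alpha_{1}(r)^{2}$ and using $\Psi(\Pi_{v}P_{t}f),\Psi(P_{t}f) \le \Psi(f)$ yields
\[
\| g\|_{2}^{2} \le C_{1}\,\alpha_{1}(r)^{2}\alpha_{2}\!\bigl(r/\alpha_{1}(r)^{2}\bigr)\bigl(\langle -\S g, g\rangle_{2} + \|\overline{\T\Pi_{v}} g\|_{2}^{2}\bigr) + C_{2}\,r\,\Psi(f).
\]
Combining this with the dissipation bound of the first paragraph and the equivalence $\Phi_{\varepsilon}\asymp\|\cdot\|_{2}^{2}$ produces the parametric family of linear differential inequalities
\[
\frac{d}{dt}\Phi_{\varepsilon}(P_{t}f) \le -\frac{c}{\alpha_{1}(r)^{2}\alpha_{2}(r/\alpha_{1}(r)^{2})}\bigl(\Phi_{\varepsilon}(P_{t}f) - Cr\,\Psi(f)\bigr), \qquad r>0.
\]
For each $t\ge 0$ I would then select $r=r(t)$ as the unique solution of $c_{2}t = \alpha_{1}(r)^{2}\alpha_{2}(r/\alpha_{1}(r)^{2})\log(1/r)$ and integrate, which gives $\Phi_{\varepsilon}(P_{t}f) \le c_{1}\,r(t)\bigl(\|f\|_{2}^{2}+\Psi(f)\bigr)$; this is exactly (\ref{eq:Pt_exp}) with $\xi$ as in (\ref{eq:xi_t}).

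The main obstacle lies in the first paragraph: in the PDMP setting $\T$ is only defined on the core $\C$, and $\A$ is built from the \emph{partial} closure $\overline{\T\Pi_{v}}$ rather than from any closure of $\T$ itself, so the formal differentiation of $\Phi_{\varepsilon}(P_{t}f)$ requires a careful verification of domain memberships, of the bounded extension of $\bar{\A}$, and of the interchange of $\bar{\A}$ with $\L$. This is precisely the disparity with \cite{Grothaus19} flagged in the introduction and the reason one must borrow the operator-theoretic machinery of \cite{Andrieu18}. A secondary subtlety is the choice of balance $s = r/\alpha_{1}(r)^{2}$ in the third paragraph: this is what produces the exponent $2$ on $\alpha_{1}$ and the nested argument $r/\alpha_{1}(r)^{2}$ appearing in the rate function (\ref{eq:xi_t}).
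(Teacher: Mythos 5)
Your overall architecture (the modified functional $\Phi_{\varepsilon}$, differentiation along the semigroup, Condition~\ref{cond:R0-rem} plus Young's inequality for the cross terms, extension from $\C$ to $\D(\L)$ via Condition~\ref{cond:Psi}, Gronwall, and the balance $r_{2}=r/\alpha_{1}(r)^{2}$) matches the paper's. However, there is a genuine gap at the heart of your first step: you claim that the term $\varepsilon\langle-\bar{\A}\T\Pi_{v}P_{t}f,P_{t}f\rangle_{2}$ ``by the very definition of $\A$ controls $\|\overline{\T\Pi_{v}}P_{t}f\|_{2}^{2}$ (up to a factor depending on $m_{2}$)''. It does not. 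On $\D(\overline{\T\Pi_{v}})$ one has $\bar{\A}\,\overline{\T\Pi_{v}}=-\phi(G)$ with $G=(\T\Pi_{v})^{*}\overline{\T\Pi_{v}}$ and $\phi(s)=s/(m_{2}+s)$, so $\langle-\bar{\A}\T\Pi_{v}g,g\rangle_{2}=\langle\phi(G)g,g\rangle_{2}\le\|g\|_{2}^{2}$, whereas $\|\overline{\T\Pi_{v}}g\|_{2}^{2}=\langle Gg,g\rangle_{2}$ is in general not controlled by $\|g\|_{2}^{2}$ since $G$ is unbounded: the inequality you need goes the wrong way. Hence your claimed dissipation bound involving $\kappa_{\T}\|\overline{\T\Pi_{v}}P_{t}f\|_{2}^{2}$ is false in general, and the subsequent direct application of (\ref{eq:wp_1}) to $\|\overline{\T\Pi_{v}}g\|_{2}^{2}$ has nothing to act on. The missing ingredient is the transfer lemma (Lemma~\ref{lem:functionofPoincareinequality}, i.e.\ Lemma 2.3 of \cite{Grothaus19}, applied here as Lemma~\ref{lem:weakPoincaretoAPoincare}): the weak Poincar\'e inequality (\ref{eq:wp_1}) for $\overline{\T\Pi_{v}}$ must first be converted into a lower bound for the \emph{bounded} quadratic form,
\[
\langle\phi(G)\Pi_{v}f,\Pi_{v}f\rangle_{2}\ \ge\ \frac{1}{m_{2}+\alpha_{1}(r)}\|\Pi_{v}f\|_{2}^{2}-\frac{r}{m_{2}+\alpha_{1}(r)}\Psi(f),
\]
and this conversion is precisely where the hypothesis $\Psi(\mathrm{e}^{-tG}f)\le\Psi(f)$ of Condition~\ref{cond:Psi} enters --- a hypothesis your plan never invokes, which is a symptom of the gap.

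Two smaller points. Differentiating $\varepsilon\langle\bar{\A}f_{t},f_{t}\rangle_{2}$ produces, besides $\langle\bar{\A}\L f_{t},f_{t}\rangle_{2}$, the term $\langle\L f_{t},\bar{\A}f_{t}\rangle_{2}$; this is not covered by Condition~\ref{cond:R0-rem} and needs its own estimate $\langle\L g,\bar{\A}g\rangle_{2}\le\|(\Id-\Pi_{v})g\|_{2}^{2}$ (as in \cite[Lemma 5]{Andrieu18}), which your accounting omits. Also, the paper keeps the parameters $r_{1},r_{2}$ fixed inside the differential inequality (via Lemma~\ref{lem:Fineqs}), applies Gronwall at fixed $r$, and only afterwards sets $r_{1}=r$, $r_{2}=r/\alpha_{1}(r)^{2}$ and takes the infimum over $r$; your order (choose $r=r(t)$, then integrate) is workable because the inequality holds for each fixed $r$, but the Gronwall step must be understood as performed at fixed $r$.
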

\begin{cor}
\label{cor:abstract}Assume the same conditions as Theorem \ref{thm:abstracthypo},
except that (\ref{eq:wp_2}) is replaced by a strong Poincaré inequality,
\begin{equation}
\|(\Id-\Pi_{v})f\|_{2}^{2}\le C_{\mathrm{P}}\langle-\S f,f\rangle_{2},\quad\forall f\in\C,\label{eq:strongPI}
\end{equation}
and assume furthermore that for each $f\in\ELL(\mu)$ with $\Psi(f)<\infty$,
we can find a sequence $(f_{n})\subset\D(\L)$ with
\begin{equation}
f_{n}\to f\text{ in }\ELL(\mu),\quad\limsup_{n\to\infty}\Psi(f_{n})\le\Psi(f).\label{eq:cor_f_n_seq}
\end{equation}
Then we have that (\ref{eq:Pt_exp}) holds, additionally, for any
$f\in\ELL(\mu)$, with 
\[
\xi(t):=c_{1}\inf\left\{ r>0:c'_{2}t\ge\alpha_{1}(r)^{2}\log1/r\right\} ,
\]
for some $c_{1},c'_{2}>0$.
\end{cor}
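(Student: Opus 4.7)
The plan is to deduce the corollary from Theorem \ref{thm:abstracthypo} in two steps: first reduce the strong Poincaré hypothesis to the weak form (\ref{eq:wp_2}) with a constant choice of $\alpha_2$, then extend the resulting bound from $\D(\L)$ to all of $\ELL(\mu)$ via the approximation hypothesis (\ref{eq:cor_f_n_seq}).

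For the first step, I would observe that (\ref{eq:strongPI}) immediately implies (\ref{eq:wp_2}) with the constant choice $\alpha_2(r)\equiv C_{\mathrm{P}}$, since the extra term $r\Psi(f)$ on the right-hand side is nonnegative. All other Conditions are assumed exactly as in Theorem \ref{thm:abstracthypo}, so applying that theorem with this $\alpha_2$ gives, for every $f\in\D(\L)$,
$$\|P_{t}f-\mu(f)\|_{2}^{2}\le\xi(t)\bigl(\|f\|_{2}^{2}+\Psi(f)\bigr),$$
with $\xi$ obtained from (\ref{eq:xi_t}) by substituting $\alpha_2\equiv C_{\mathrm{P}}$. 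The constant $C_{\mathrm{P}}$ can then be absorbed into the rate constant, yielding the stated form of $\xi$ with $c'_2:=c_2/C_{\mathrm{P}}$.

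For the second step, fix $f\in\ELL(\mu)$. If $\Psi(f)=\infty$ the right-hand side of (\ref{eq:Pt_exp}) is infinite and the inequality is trivial, so assume $\Psi(f)<\infty$ and let $(f_n)\subset\D(\L)$ be a sequence furnished by (\ref{eq:cor_f_n_seq}). I would then apply the first-step bound to each $f_n$, and pass to the limit in $n$. By Condition \ref{cond:basic-core}, $P_t$ is a contraction on $\ELL(\mu)$, so $P_{t}f_n\to P_{t}f$ in $\ELL(\mu)$, while $|\mu(f_n)-\mu(f)|\le\|f_n-f\|_{2}\to 0$. Consequently $\|P_{t}f_n-\mu(f_n)\|_{2}^{2}\to\|P_{t}f-\mu(f)\|_{2}^{2}$ and $\|f_n\|_{2}^{2}\to\|f\|_{2}^{2}$. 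Taking $\limsup_{n\to\infty}$ on both sides of the first-step inequality and using $\limsup_{n\to\infty}\Psi(f_n)\le\Psi(f)$ yields (\ref{eq:Pt_exp}) for this $f$.

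There is no genuine analytic obstacle beyond what is already packaged in Theorem \ref{thm:abstracthypo}; the main point to check is that the $\limsup$ distributes across the sum $\|f_n\|_{2}^{2}+\Psi(f_n)$, which is immediate since the first summand converges and the second has a prescribed $\limsup$ bound. The role of the extra hypothesis (\ref{eq:cor_f_n_seq}) is precisely to ensure that each $f\in\ELL(\mu)$ with $\Psi(f)<\infty$ admits such an approximating sequence within $\D(\L)$, making the passage to the limit legitimate.
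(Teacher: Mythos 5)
Your proposal is correct and follows essentially the same route as the paper: treat the strong Poincar\'e inequality as the special case of (\ref{eq:wp_2}) with $\alpha_2$ constant (giving $c'_2=c_2/C_{\mathrm{P}}$), then extend from $\D(\L)$ to $\ELL(\mu)$ by applying the bound to the approximating sequence from (\ref{eq:cor_f_n_seq}) and taking the $\limsup$. Your version merely spells out the routine limit details (contractivity of $P_t$, convergence of $\mu(f_n)$) that the paper leaves implicit.
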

As we shall see in Example \ref{exa:CLT}, our application of Theorem
\ref{thm:abstracthypo} to PDMPs, given in Theorem \ref{thm:PDMP},
greatly broadens the class of PDMP Monte Carlo processes for which
a central limit theorem holds. The following corollary will be applied
to our examples in Section \ref{sec:Examples}.
\begin{cor}
\label{cor:general-clt}Whenever
\[
\int_{0}^{\infty}t^{-3/2}\xi^{1/2}(t)\,\dif t<\infty,
\]
then for any $f\in\ELL(\mu)$ such that $\mu(f)=0$ and $\Psi(f)<\infty$,
the finite-dimensional distributions of the rescaled process 
\[
t\mapsto\sigma^{-1}N^{-1/2}\int_{0}^{Nt}f(X_{s},V_{s})\,\dif s,
\]
converges as $N\to\infty$ to those of a standard one-dimensional
Brownian motion, where $\sigma>0$ is an appropriately chosen constant
defined in \cite[Theorem MW]{Toth2013}.
\end{cor}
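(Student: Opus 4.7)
The plan is to invoke Theorem MW of \cite{Toth2013}, a non-reversible continuous-time Maxwell--Woodroofe-type central limit theorem for stationary Markov processes. That result delivers the finite-dimensional CLT together with the identification of the variance constant $\sigma$, provided an integrability criterion on the semigroup $(P_t)$ applied to $f$ is satisfied. The proof therefore reduces to verifying that criterion using the subgeometric decay estimate of Theorem \ref{thm:abstracthypo}.

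First, I would extend the bound of Theorem \ref{thm:abstracthypo} from $f\in\D(\L)$ to any mean-zero $f\in\ELL(\mu)$ with $\Psi(f)<\infty$. Approximating $f$ by a sequence $(f_n)\subset\D(\L)$ using the density assumed in Condition \ref{cond:Psi}, applying Theorem \ref{thm:abstracthypo} to each $f_n$, and passing to the limit using the monotonicity $\Psi(P_t f_n)\le\Psi(f_n)$, the contractivity of $P_t$ on $\ELL(\mu)$, and Fatou's lemma, one obtains
\[
\|P_t f\|_2 \le \xi^{1/2}(t)\bigl(\|f\|_2^2+\Psi(f)\bigr)^{1/2},\qquad t\ge 0,
\]
with no centering term since $\mu(P_t f)=\mu(f)=0$ by stationarity.

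Second, I would plug this estimate into the integrability hypothesis of Theorem MW. Under the bound above, that hypothesis reduces to a weighted integrability in $t$ of $\xi^{1/2}(t)$; matching the weight supplied by \cite{Toth2013} recovers precisely the assumed
\[
\int_0^\infty t^{-3/2}\xi^{1/2}(t)\,\dif t<\infty.
\]
Short times pose no difficulty since $\|P_t f\|_2\le\|f\|_2$ by contractivity of $(P_t)$, while large times are controlled by the assumed decay of $\xi$.

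The main obstacle will be aligning the abstract resolvent framework of \cite{Toth2013} with our PDMP setting: one must verify that $f$ lies in a domain on which the resolvent $(\lambda-\L)^{-1}f$ admits the asymptotic behaviour needed to identify $\sigma^2$ as $\lambda\downarrow 0$. In the absence of spectral tools, $\L$ being non-normal, this is done by hand using the resolvent identity, the $\ELL(\mu)$-contraction of $P_t$, and the $\Psi$-stability guaranteed by Condition \ref{cond:Psi}. Once these technical checks are in place, the conclusion follows directly from Theorem MW of \cite{Toth2013}.
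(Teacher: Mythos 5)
Your proposal follows essentially the same route as the paper: the paper's proof is simply a direct appeal to \cite[Theorem MW]{Toth2013}, whose hypothesis is the integrability condition $\int_{0}^{\infty}t^{-3/2}\lVert v_{t}\rVert_{2}\,\dif t<\infty$ with $v_{t}:=\int_{0}^{t}P_{s}f\,\dif s$, verified from the subgeometric decay of $\lVert P_{t}f\rVert_{2}$ (which, in the PDMP setting, Theorem \ref{thm:PDMP} already provides for all $f\in\ELL(\mu)$, so your extension step is the same density argument as in Corollary \ref{cor:abstract}). The only caveats are that the $t^{-3/2}$ weight in \cite{Toth2013} applies to $\lVert v_{t}\rVert_{2}$ rather than directly to $\lVert P_{t}f\rVert_{2}$ --- which is precisely why the short-time behaviour is harmless --- and that no further resolvent asymptotics need to be checked beyond this integrability condition, so your anticipated ``main obstacle'' does not arise.
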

\begin{proof}[Proof of Corollary \ref{cor:abstract}.]
Fix some $f\in\ELL(\mu)$. If $\Psi(f)=\infty$ then (\ref{eq:Pt_exp})
vacuously holds. So assume $\Psi(f)<\infty$, and choose a sequence
$(f_{n})\subset\D(\L)$ satisfying (\ref{eq:cor_f_n_seq}). We can
apply (\ref{eq:Pt_exp}) to each $f_{n}$, and by taking the $\limsup$
we conclude the inequality (\ref{eq:Pt_exp}) is valid for $f$ also.
The alternative expression for $\xi$ is immediate from the expression
(\ref{eq:xi_t}) since in this case, $\alpha_{2}$ can be uniformly
bounded from above. If $c_{2}$ denotes the constant in (\ref{eq:xi_t}),
then we can choose
\[
c'_{2}=\frac{c_{2}}{C_{\mathrm{P}}}.
\]
\end{proof}
\begin{proof}[Proof of Corollary \ref{cor:general-clt}]
 This is a direct application of \cite[Theorem MW]{Toth2013} which
holds whenever, with $f\in\ELL(\mu)$ such that $\int f\,\dif\mu=0$
and $v_{t}:=\int_{0}^{t}P_{s}f\,\dif s$ for each $t>0$,
\begin{equation}
\int_{0}^{\infty}t^{-3/2}\| v_{t}\|_{2}\,\dif t<\infty.\label{eq:CLT_condition}
\end{equation}
\end{proof}
\begin{rem}
The proof of Theorem \ref{thm:abstracthypo} follows the proof of
Theorem 2.1 of \cite{Grothaus19}, however we have adapted the proof
to take into account our differing assumptions and have been careful
to track the constants involved. Due to the structure of the generators
of the PDMPs we consider, under Assumptions \ref{assu:V_nu} and \ref{assu:refreshment},
the operator $\mathcal{S}$ will satisfy (\ref{eq:wp_2}) with $\alpha_{2}(r)$
constant and hence we may set $\Psi=0$ in this inequality and obtain
(\ref{eq:strongPI}); see Corollary \ref{cor:abstract}. This stems
from the specific refreshment mechanism employed here -- the measure
$\nu$ may be required to satisfy a weak Poincaré inequality of the
form of (\ref{eq:weakPI}) when using a diffusion for this update;
see Section \ref{subsec:Checking-ConditionWPI} for a similar situation
involving SDEs. Therefore we include the details of the proof of Theorem
\ref{thm:abstracthypo}, making it straightforward to see how the
constants simplify in Corollary \ref{cor:abstract}. 
\end{rem}
Before we prove Theorem \ref{thm:abstracthypo} we need the following
two lemmas. The following is taken from \cite[Lemma 2.3]{Grothaus19}:
\begin{lem}[{\cite[Lemma 2.3]{Grothaus19}}]
\label{lem:functionofPoincareinequality}Let $(A,\D(A))$ be a densely
defined closed linear operator on a separable Hilbert space $(\mathsf{H},\langle\cdot,\cdot\rangle,\lVert\cdot\rVert).$
Let $(T_{t})_{t\geq0}$ be the $C_{0}$-contraction semigroup generated
by the self-adjoint operator $-A^{*}A$ with domain $\D(A^{*}A):=\{f\in\D(A):Af\in\D(A^{*})\}.$
If the weak Poincaré inequality
\begin{equation}
\lVert f\rVert^{2}\leq\alpha(r)\lVert Af\rVert^{2}+r\varPsi(f),\qquad r>0,f\in\D(A)\label{eq:weak-poincare}
\end{equation}
holds for some decreasing $\alpha:(0,\infty)\to(0,\infty),$ where
$\varPsi:\mathsf{H}\to[0,\infty]$ satisfies
\[
\varPsi(T_{t}f)\leq\varPsi(f),\qquad t\geq0,f\in\D(A).
\]
Then, for any $m_{2}>0$,
\[
\lVert f\rVert^{2}\leq(m_{2}+\alpha(r))\langle(m_{2}\Id+A^{*}A)^{-1}A^{*}Af,f\rangle+r\varPsi(f),\quad r>0,f\in\D(A).
\]
\end{lem}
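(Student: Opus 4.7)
The plan is to exploit the resolvent representation of $(m_2\Id + A^*A)^{-1}$ in terms of the semigroup $(T_t)_{t\ge 0}$ generated by $-A^*A$, apply the assumed weak Poincar\'e inequality along the trajectories $t\mapsto T_t f$, and use the $\varPsi$-invariance under $(T_t)$ to keep the remainder term controlled by $r\varPsi(f)$. The starting point is the functional-calculus identity
$$(m_2\Id+A^*A)^{-1}A^*A = \Id - m_2(m_2\Id+A^*A)^{-1},$$
which yields the decomposition
$$\lVert f\rVert^2 = \langle (m_2\Id+A^*A)^{-1}A^*A f, f\rangle + m_2\langle (m_2\Id+A^*A)^{-1}f, f\rangle,$$
so that the lemma reduces to an appropriate bound on the second term on the right.

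To obtain such a bound, I would introduce $\phi(t):=\langle T_t f, f\rangle=\lVert T_{t/2}f\rVert^2$ (using self-adjointness of $T_t$ together with the semigroup property), so that $\phi'(t)=-\langle A^*A T_tf,f\rangle = -\lVert AT_{t/2}f\rVert^2$. Applying the WPI to $T_{t/2}f\in\D(A)$ and the assumed contraction $\varPsi(T_{t/2}f)\le\varPsi(f)$ produces the differential inequality
$$\phi(t)\le -\alpha(r)\phi'(t)+r\varPsi(f),\quad t\ge 0.$$
Next I would integrate this inequality against the exponential weight $e^{-m_2 t}$ on $[0,\infty)$: on the left, the resolvent identity $(m_2\Id+A^*A)^{-1}=\int_0^\infty e^{-m_2 t}T_t\,\mathrm{d}t$ recognises the integral as $\langle(m_2\Id+A^*A)^{-1}f,f\rangle$, while on the right an integration by parts rewrites $\int_0^\infty e^{-m_2 t}\phi'(t)\,\mathrm{d}t$ as a linear combination of $\lVert f\rVert^2$ and the same resolvent pairing. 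Solving the resulting linear inequality for $\langle(m_2\Id+A^*A)^{-1}f,f\rangle$ and substituting back into the decomposition yields the desired bound.

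The main obstacle is tracking the precise constant $m_2+\alpha(r)$: the straightforward implementation of the scheme above seems to produce the slightly weaker constant $1+m_2\alpha(r)$, and sharpening it to $m_2+\alpha(r)$ appears to require either choosing an integration weight tailored more finely to the spectrum of $A^*A$, or supplementing the semigroup argument with a spectral-truncation step that splits the spectrum of $A^*A$ into low- and high-frequency parts and treats each piece by a separate estimate (high frequencies by the elementary bound $\lambda/(m_2+\lambda)\ge \Lambda/(m_2+\Lambda)$ on $\{\lambda\ge\Lambda\}$, low frequencies by the WPI applied to the smoothed function $T_tf$ with a suitably chosen $t$). A secondary technicality is justifying all the manipulations for $f\in\D(A)$ rather than the smaller domain $\D(A^*A)$, which follows from the density of $\D(A^*A)$ in $\D(A)$ in the graph norm and the continuity of both sides of the inequality under graph-norm convergence.
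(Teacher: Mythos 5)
This lemma is not proved in the paper at all---it is quoted verbatim (with $1$ generalised to $m_2$) from \cite[Lemma 2.3]{Grothaus19}---so there is no in-paper proof to compare against; your proposal has to be judged on its own. Your core computation is correct and is essentially the standard argument behind the cited result: from $\lVert f\rVert^{2}=\langle(m_{2}\Id+A^{*}A)^{-1}A^{*}Af,f\rangle+m_{2}\langle(m_{2}\Id+A^{*}A)^{-1}f,f\rangle$, the identities $\phi(t)=\langle T_{t}f,f\rangle=\lVert T_{t/2}f\rVert^{2}$ and $\phi'(t)=-\lVert AT_{t/2}f\rVert^{2}$, the WPI along the trajectory together with $\varPsi(T_{t/2}f)\le\varPsi(f)$, and the representation $(m_{2}\Id+A^{*}A)^{-1}=\int_{0}^{\infty}e^{-m_{2}t}T_{t}\,\dif t$, integration by parts gives, with $R:=\langle(m_{2}\Id+A^{*}A)^{-1}f,f\rangle$, the inequality $R\le\alpha(r)\left(\lVert f\rVert^{2}-m_{2}R\right)+(r/m_{2})\varPsi(f)$, and hence $\lVert f\rVert^{2}\le\left(1+m_{2}\alpha(r)\right)\langle(m_{2}\Id+A^{*}A)^{-1}A^{*}Af,f\rangle+r\varPsi(f)$ with the remainder exactly $r\varPsi(f)$. (The domain worry you raise is not an issue: for $f\in\D(A)$ and $t>0$ one has $T_{t}f\in\D(A^{*}A)$, so no approximation from $\D(A^{*}A)$ is needed.)

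The genuine problem is the step you leave open: no finer weight or spectral truncation can upgrade $1+m_{2}\alpha(r)$ to $m_{2}+\alpha(r)$, because the statement with constant $m_{2}+\alpha(r)$ is false for general $m_{2}$. Take $\mathsf{H}=\R$, $Af=\sqrt{\lambda_{0}}\,f$ with $\lambda_{0}=1/100$ (so $T_{t}f=e^{-\lambda_{0}t}f$), $\varPsi(f)=\lVert f\rVert^{2}$, and $\alpha(r)=\max\{2,(1-r)/\lambda_{0}\}$, which is decreasing and positive (perturb by $e^{-r}$ if strict decrease is insisted on); by homogeneity the WPI holds for every $r>0$ since $\alpha(r)\lambda_{0}+r\ge1$. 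With $m_{2}=10$, $r=0.985$ one has $\alpha(r)=2$ and $(m_{2}+\alpha(r))\lambda_{0}/(m_{2}+\lambda_{0})+r\approx0.997<1=\lVert f\rVert^{2}$, so the asserted conclusion fails; the same family shows $1+m_{2}\alpha(r)$ is the optimal constant, and the two expressions coincide precisely when $m_{2}=1$, which is the case actually proved in \cite[Lemma 2.3]{Grothaus19}. In other words, your proof establishes the correct version of the lemma, and the constant recorded in the paper's $m_{2}$-generalisation is mis-stated rather than your argument being deficient. The discrepancy is harmless downstream: since $m_{2}\ge1$ and $\alpha_{1}\ge1$ one has $m_{2}+\alpha_{1}(r)\le1+m_{2}\alpha_{1}(r)\le m_{2}\left(m_{2}+\alpha_{1}(r)\right)$, so Lemma \ref{lem:weakPoincaretoAPoincare}, Lemma \ref{lem:Fineqs} and Theorem \ref{thm:abstracthypo} go through with $m_{2}+\alpha_{1}(r)$ replaced by $1+m_{2}\alpha_{1}(r)$, changing $c_{1},c_{2}$ only by factors depending on $m_{2}$ and leaving the form of the rate $\xi$ unchanged.
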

The following is a consequence of Lemma \ref{lem:functionofPoincareinequality}.
\begin{lem}
\label{lem:weakPoincaretoAPoincare}Assume Conditions \ref{cond:basic-core},
\ref{cond:closable}, \ref{cond:proj}, \ref{cond:R0-rem}, \ref{cond:Psi}
\ref{cond:weak_poincare}, then for any $f\in\C$
\[
\langle\Ab\overline{\T\Pi_{v}}\Pi_{v}f,\Pi_{v}f\rangle_{2}\leq-\frac{1}{m_{2}+\alpha_{1}(r)}\lVert\Pi_{v}f\rVert_{2}^{2}+\frac{r}{m_{2}+\alpha_{1}(r)}\Psi(f).
\]
\end{lem}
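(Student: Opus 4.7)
The plan is to recognize the composition $\bar{\A}\,\overline{\T\Pi_{v}}$ as the resolvent-type operator $-(m_{2}\Id+G)^{-1}G$ and then invoke Lemma \ref{lem:functionofPoincareinequality} with $A=\overline{\T\Pi_{v}}$, taking \eqref{eq:wp_1} as the required weak Poincar\'e input and the semigroup $\Psi$-contraction in \eqref{eq:Psi_ineqs} as the auxiliary hypothesis.

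I would first unpack $\bar{\A}$ using its definition $\bar{\A}=(m_{2}\Id+G)^{-1}(-\T\Pi_{v})^{*}$ on $\D((\T\Pi_{v})^{*})$. For $f\in\C$, Condition \ref{cond:closable} places $\Pi_{v}f$ in $\C$, and Condition \ref{cond:basic-core} ensures $\C\subset\D(\T)\cap\D(\T^{*})$, from which one verifies that $\T\Pi_{v}f\in\D((\T\Pi_{v})^{*})$, hence $\Pi_{v}f\in\D(G)$. Consequently,
\[
\bar{\A}\,\overline{\T\Pi_{v}}\Pi_{v}f=-(m_{2}\Id+G)^{-1}G\Pi_{v}f.
\]

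Next I would reduce to the mean-zero subspace. Setting $g:=\Pi_{v}f-\mu(f)$, the operator $\overline{\T\Pi_{v}}$ (and hence $G$) annihilates constants, so $G\Pi_{v}f=Gg$. Moreover $(\T\Pi_{v})^{*}$ lands in the orthogonal complement of constants (since $\langle(\T\Pi_{v})^{*}h,1\rangle=\langle h,\T\Pi_{v}1\rangle=0$), and the resolvent $(m_{2}\Id+G)^{-1}$ preserves this subspace; therefore
\[
\langle\bar{\A}\,\overline{\T\Pi_{v}}\Pi_{v}f,\Pi_{v}f\rangle_{2}=-\langle(m_{2}\Id+G)^{-1}Gg,g\rangle_{2}.
\]
Applying \eqref{eq:wp_1} to $g$, using $\Pi_{v}g=g$ and $\mu(g)=0$, yields the weak-Poincar\'e hypothesis $\lVert g\rVert_{2}^{2}\le\alpha_{1}(r)\lVert\overline{\T\Pi_{v}}g\rVert_{2}^{2}+r\Psi(g)$, while the required contraction $\Psi(\mathrm{e}^{-tG}h)\le\Psi(h)$ is exactly the second inequality in \eqref{eq:Psi_ineqs}. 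Lemma \ref{lem:functionofPoincareinequality} then delivers
\[
\lVert g\rVert_{2}^{2}\le(m_{2}+\alpha_{1}(r))\langle(m_{2}\Id+G)^{-1}Gg,g\rangle_{2}+r\Psi(g),
\]
which on rearrangement is the claimed bound once one uses $\Psi(g)\le\Psi(\Pi_{v}f)\le\Psi(f)$ (the last step by Condition \ref{cond:Psi}) and identifies $\lVert g\rVert_{2}=\lVert\Pi_{v}f\rVert_{2}$ in the mean-zero setting in which the lemma is subsequently applied.

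The main obstacle is the domain verification $\Pi_{v}f\in\D(G)$, and in particular $\T\Pi_{v}f\in\D((\T\Pi_{v})^{*})$: this is exactly the sort of technical point highlighted in the discussion preceding the statement, where PDMP generators are only known on the core $\C$ and one cannot freely manipulate closures as in the diffusion framework of \cite{Grothaus19}. Once this is checked using $\C\subset\D(\T^{*})$ from Condition \ref{cond:basic-core}, the remainder is a direct bookkeeping exercise using Lemma \ref{lem:functionofPoincareinequality}.
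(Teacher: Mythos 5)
Your overall strategy is the paper's: identify $\bar{\A}\,\overline{\T\Pi_{v}}$ with $-(m_{2}\Id+G)^{-1}G=-\phi(G)$, $\phi(s)=s/(m_{2}+s)$, and feed (\ref{eq:wp_1}) and the $\mathrm{e}^{-tG}$-contraction of $\Psi$ into Lemma \ref{lem:functionofPoincareinequality}, then use $\Psi(\Pi_{v}f)\le\Psi(f)$. But the step you yourself flag as the main obstacle is where your argument has a genuine gap: you claim that $\T\Pi_{v}f\in\D((\T\Pi_{v})^{*})$, hence $\Pi_{v}f\in\D(G)$, follows from $\C\subset\D(\T)\cap\D(\T^{*})$. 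That inference is invalid. Membership in $\D((\T\Pi_{v})^{*})$ is a property of the vector $\T\Pi_{v}f$ itself, and in the abstract setting there is no reason why $\T\Pi_{v}f$ (for $f\in\C$) should lie in $\C$, in $\D(\T^{*})$, or in any set where such membership can be read off the Conditions; the Conditions only control the operators on the core $\C$, and nothing guarantees $\Pi_{v}\C\subset\D(G)$. This is precisely the PDMP-specific difficulty the text warns about, and it is why the paper does \emph{not} verify $\Pi_{v}f\in\D(G)$: instead it notes, via \cite[Theorem 5.1.9]{Pedersen}, that $G$ is self-adjoint with $\D(G)$ a core for $\overline{\T\Pi_{v}}$, that $\bar{\A}\,\overline{\T\Pi_{v}}=-\phi(G)$ holds on $\D(G)$, and that both sides are bounded (boundedness of $\bar{\A}$ from \cite[Lemma 3]{Andrieu18}), so the identity extends by continuity along the graph norm to all of $\D(\overline{\T\Pi_{v}})\supset\Pi_{v}\C$. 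Without this (or an equivalent) extension argument, your pairing $\langle\bar{\A}\,\overline{\T\Pi_{v}}\Pi_{v}f,\Pi_{v}f\rangle_{2}=-\langle(m_{2}\Id+G)^{-1}G\Pi_{v}f,\Pi_{v}f\rangle_{2}$ is not justified.

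A secondary imprecision: you invoke Lemma \ref{lem:functionofPoincareinequality} with $A=\overline{\T\Pi_{v}}$ without specifying the Hilbert space. The hypothesis (\ref{eq:weak-poincare}) must hold for \emph{all} $f\in\D(A)$, and it fails on $\ELL(\mu)$ (take $f\neq0$ with $\Pi_{v}f=0$). One must therefore restrict to the velocity-independent, mean-zero subspace, which is exactly what the paper does by taking $\mathsf{H}=\ELL_{0}(\pi)$ and $A=\big((\T\Pi_{v})^{*}\overline{\T\Pi_{v}}\big)^{1/2}\big|_{\mathsf{H}}$ with domain $\D(\overline{\T\Pi_{v}})$; your mean-zero reduction of the particular $g=\Pi_{v}f-\mu(f)$ does not by itself repair the hypothesis of the lemma. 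With the extension-by-continuity argument and the correct choice of $\mathsf{H}$ and $A$, the rest of your bookkeeping (annihilation of constants, $\Psi(\Pi_{v}f)\le\Psi(f)$, rearrangement) matches the paper.
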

\begin{proof}
We apply Lemma \ref{lem:functionofPoincareinequality} with $\mathsf{H}:=\ELL_{0}(\pi)=\left\{ f\in L^{2}(\pi):\pi(f)=0\right\} $
and $(A,\D(A))=\big(((\T\Pi_{v})^{*}\overline{\T\Pi_{v}})^{1/2}\rvert_{\mathsf{H}},\D(\overline{\T\Pi_{v}})\big)$.
The existence of $A$ follows from: (a) $\T$ is anti-symmetric and
$\C$ dense, implying that $\T\Pi_{v}$ is closable of closure we
denote $\big(\overline{\T\Pi_{v}},\D\big(\overline{\T\Pi_{v}}\big)\big)$,
(b) $(\T\Pi_{v})^{*}\overline{\T\Pi_{v}}$ is well defined and self-adjoint
by \cite[Theorem 5.1.9]{Pedersen} and (c) $(A,\D(A))$ exists as
defined and is self-adjoint and positive \cite[Theorem VIII.32]{reed2012methods}.
Note that (\ref{eq:weak-poincare}) holds by (\ref{eq:wp_1}) and
the fact that $\D(\T\Pi_{v})$ is a core for $(\overline{\T\Pi_{v}},\D(\overline{\T\Pi_{v}}))$.
Now since $\D(\A)=\D\big((\T\Pi_{v})^{*}\big)$, for $f\in\D\big((\T\Pi_{v})^{*}(\overline{\T\Pi_{v}})\big)$,
\[
\Ab\overline{\T\Pi_{v}}f=-\left(m_{2}\mathrm{Id}+(\T\Pi_{v})^{*}(\overline{\T\Pi_{v}})\right)^{-1}(\T\Pi_{v})^{*}(\overline{\T\Pi_{v}})f=-\phi\big((\T\Pi_{v})^{*}(\overline{\T\Pi_{v}})\big)f,
\]
where $\phi(s):=s/(m_{2}+s)$. Further from \cite[Theorem 5.1.9]{Pedersen}
$\D\big((\T\Pi_{v})^{*}(\overline{\T\Pi_{v}})\big)$ is a core for
$\D(\overline{\T\Pi_{v}})$ and $\Ab\overline{\T\Pi_{v}}$ is bounded,
implying that on $\D(\overline{\T\Pi_{v}})$, $\Ab\overline{\T\Pi_{v}}=-\phi\big((\T\Pi_{v})^{*}(\overline{\T\Pi_{v}})\big)$.
This together with Condition \ref{cond:Psi} allows us to apply Lemma
\ref{lem:functionofPoincareinequality} to conclude that
\[
\lVert f\rVert_{2}^{2}\leq(m_{2}+\alpha_{1}(r))\langle\phi\big((\T\Pi_{v})^{*}(\overline{\T\Pi_{v}})\big)f,f\rangle_{2}+r\Psi(f),\,\forall r>0,f\in\D(\overline{\T\Pi_{v}}).
\]
 Rearranging, setting $f=\Pi_{v}g$ for $g\in\C\subset\D(\overline{\T\Pi_{v}})$
and using Condition \ref{cond:Psi} we obtain
\[
\langle\Pi_{v}g,\Pi_{v}\T^{*}\A^{*}g\rangle_{2}\leq-\frac{1}{m_{2}+\alpha_{1}(r)}\lVert\Pi_{v}g\rVert_{2}^{2}+\frac{r}{m_{2}+\alpha_{1}(r)}\Psi(g).
\]
\end{proof}
We now establish the following result which is based on \cite[Lemma 5]{Andrieu18}.
To prove the above theorem we need to define the closure $\Ab$ of
$\A$ which is defined on the whole space $\ELL(\mu)$, this is possible
since $\A$ is a bounded operator see \cite[Lemma 3]{Andrieu18}.
Define for any $g\in\D(\L)$
\[
\F_{1}(g)=\langle\L g,g\rangle_{2},\quad\F_{2}(g)=\langle\L g,\Ab g\rangle_{2},\quad,\F_{3}(g)=\langle\Ab\L g,g\rangle_{2}.
\]

\begin{lem}
\label{lem:Fineqs}Assume that Conditions \ref{cond:basic-core},
\ref{cond:closable}, \ref{cond:proj}, \ref{cond:R0-rem}, \ref{cond:Psi},
\ref{cond:weak_poincare} hold. Then for any $g\in\D(\L)$ we have
for any $r_{1},r_{2}>0$,
\begin{align}
\F_{1}(g) & \leq-\frac{1}{\alpha_{2}(r_{2})}\|(\Id-\Pi_{v})g\|_{2}^{2}+\frac{r_{2}}{\alpha_{2}(r_{2})}\Psi(g),\qquad\F_{2}(g)\leq\lVert(\Id-\Pi_{v})g\rVert_{2}^{2},\label{eq:Finequalities}\\
\F_{3}(g) & \leq-\frac{1}{m_{2}+\alpha_{1}(r_{1})}\lVert\Pi_{v}g\rVert_{2}^{2}+\frac{r_{1}}{m_{2}+\alpha_{1}(r_{1})}\Psi(g)+R_{0}\lVert(\Id-\Pi_{v})g\rVert_{2}\lVert\Pi_{v}g\rVert_{2}.\nonumber 
\end{align}
\end{lem}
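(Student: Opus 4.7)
The plan is to prove each inequality first for $g\in\C$, where the decomposition $\L=\S+\T$ can be manipulated freely, and then extend to $\D(\L)$ using the core property of Condition \ref{cond:basic-core} together with the $\Psi$-control of Condition \ref{cond:Psi}. Five structural facts drive the core-level arguments, all consequences of Conditions \ref{cond:closable} and \ref{cond:proj}: (i) $\S\Pi_v=0$ on $\C$, since $\S$ is symmetric and $\mathrm{Ran}(\Pi_v)\subset\mathrm{Ker}(\S^*)$; (ii) $(\T\Pi_v)^*\Pi_v=0$, obtained by pairing $\T\Pi_v f$ against $\Pi_v g$ on $\C$ and using antisymmetry of $\T$ together with $\Pi_v\T\Pi_v=0$; (iii) $\mathrm{Ran}(\Ab)\subset\mathrm{Ran}(\Pi_v)$ (a closed subspace), because $(\T\Pi_v)^* g$ lies in $\mathrm{Ran}(\Pi_v)$ and $(m_2\Id+G)^{-1}$ preserves it; (iv) by closure, $\overline{\T\Pi_v}$ maps $\D(\overline{\T\Pi_v})$ into $\mathrm{Ran}(\Id-\Pi_v)$; (v) the spectral identity $\|\overline{\T\Pi_v}\Ab\|_{\mathrm{op}}\le 1$, coming from $\overline{\T\Pi_v}\Ab=-\phi\bigl(\overline{\T\Pi_v}(\T\Pi_v)^*\bigr)$ with $\phi(s)=s/(m_2+s)\in[0,1)$.

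The bound on $\F_1$ is immediate: antisymmetry of $\T$ on $\C$ gives $\F_1(g)=\langle\S g,g\rangle_2$, and (\ref{eq:wp_2}) rearranged produces the estimate. For $\F_2$, (iii) places $\Ab g\in\mathrm{Ker}(\S^*)$, killing the symmetric contribution, and then self-adjointness of $\Pi_v$ with (ii) yields $\Pi_v\T g=-(\T\Pi_v)^* g$; moving the closure onto the right via $(\T\Pi_v)^{**}=\overline{\T\Pi_v}$, and invoking (iv) on the left together with $\Ab g=\Ab(\Id-\Pi_v)g$ (a direct consequence of (ii)) on the right, gives
\[
\F_2(g)=-\bigl\langle(\Id-\Pi_v)g,\;\overline{\T\Pi_v}\Ab(\Id-\Pi_v)g\bigr\rangle_2,
\]
which is $\le\|(\Id-\Pi_v)g\|_2^2$ by Cauchy--Schwarz and (v). For $\F_3$, the decomposition
\[
\F_3(g)=\langle\Ab\overline{\T\Pi_v}\Pi_v g,g\rangle_2+\bigl[\langle\Ab\S g,g\rangle_2+\langle\Ab\T(\Id-\Pi_v)g,g\rangle_2\bigr]
\]
splits $\F_3$ into a leading term and an error bracket; the bracket is bounded by $R_0\|(\Id-\Pi_v)g\|_2\|\Pi_v g\|_2$ directly by Condition \ref{cond:R0-rem}, and in the leading term, writing $g=\Pi_v g+(\Id-\Pi_v)g$ kills the cross term by (iii) and reduces the diagonal piece exactly to the situation of Lemma \ref{lem:weakPoincaretoAPoincare}.

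To extend from $\C$ to $g\in\D(\L)$, the core property lets us pick $f_n\in\C$ with $f_n\to g$ and $\L f_n\to\L g$ in $\ELL(\mu)$, so that all inner products involving the bounded operators $\Ab$, $\Pi_v$, and $\overline{\T\Pi_v}\Ab$ pass to the limit and the norm terms converge; the $\Psi$ contribution is controlled via the $\limsup$ inequality of Condition \ref{cond:Psi}, combined with the graph-norm approximation through a standard diagonal argument. I expect the main obstacle to be not the clean core-level manipulations themselves but the rigorous justification of the adjoint-level identities $(\T\Pi_v)^{**}=\overline{\T\Pi_v}$, $(\T\Pi_v)^*\Pi_v=0$, and $\mathrm{Ran}(\Ab)\subset\mathrm{Ran}(\Pi_v)$: since the form of $\L$ is only known explicitly on $\C$, these cannot be checked by direct computation on any larger domain and must be teased out by careful density and functional-calculus arguments, which is exactly the PDMP-specific subtlety highlighted in the introduction.
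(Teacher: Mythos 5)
Your core-level ($g\in\C$) arguments are sound and essentially follow the paper's route: $\F_{1}$ via antisymmetry of $\T$ plus (\ref{eq:wp_2}), $\F_{3}$ via the split into $\langle\Ab\overline{\T\Pi_{v}}\Pi_{v}g,g\rangle_{2}$ plus the bracket controlled by Condition \ref{cond:R0-rem} and Lemma \ref{lem:weakPoincaretoAPoincare}; for $\F_{2}$ you reconstruct from scratch (via $\mathrm{Ran}(\Ab)\subset\mathrm{Ran}(\Pi_{v})$, $(\T\Pi_{v})^{*}\Pi_{v}=0$ and $\|\overline{\T\Pi_{v}}\Ab\|\le1$) what the paper simply imports from \cite[Lemma 5]{Andrieu18}; that is legitimate, and your structural facts (i)--(v) are all verifiable from Conditions \ref{cond:closable} and \ref{cond:proj}.

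The genuine gap is in the extension from $\C$ to $\D(\L)$. You propose to take a single graph-norm approximating sequence $f_{n}\in\C$ (from the core property) and to handle the $\Psi$ terms ``via the limsup inequality of Condition \ref{cond:Psi} \dots{} through a standard diagonal argument.'' No such diagonal argument exists: the core property gives a sequence converging in graph norm but with no control whatsoever on $\Psi(f_{n})$ (which may be infinite or unbounded), while the sequence supplied by Condition \ref{cond:Psi} satisfies $\limsup_{n}\Psi(f_{n})\le\Psi(f)$ but is only guaranteed to converge in $\ELL(\mu)$, not in graph norm; since $\Psi$ is not continuous and the two sequences cannot be merged, neither the $\F_{1}$ nor the $\F_{3}$ inequality passes to the limit along a single sequence as you describe. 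The correct mechanism is built into Condition \ref{cond:Psi} itself: its first inequality in (\ref{eq:Psi_limsup}), $\limsup_{n}\langle-\L f_{n},f_{n}\rangle_{2}\le\langle-\L f,f\rangle_{2}$, is exactly the substitute for graph-norm convergence, so for $\F_{1}$ you should use \emph{only} the Condition \ref{cond:Psi} sequence (boundedness of $\Id-\Pi_{v}$ handles the norm term). For $\F_{3}$ the paper splits the extension in two: first extend $\F_{3}(g)\le\langle\Ab\overline{\T\Pi_{v}}\Pi_{v}g,g\rangle_{2}+R_{0}\|(\Id-\Pi_{v})g\|_{2}\|\Pi_{v}g\|_{2}$ to $\D(\L)$ by graph-norm density of $\C$ (no $\Psi$ appears, and $\Ab$, $\Ab\overline{\T\Pi_{v}}$, $\Pi_{v}$ are bounded); then separately extend the weak-Poincar\'e-type bound (\ref{eq:F_3_ineq}) on the bounded form $\langle\Ab\overline{\T\Pi_{v}}\Pi_{v}g,\Pi_{v}g\rangle_{2}$ using the Condition \ref{cond:Psi} sequence, for which $\ELL(\mu)$-convergence suffices. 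Your proof needs this two-stage structure (and the first inequality of (\ref{eq:Psi_limsup}) for $\F_{1}$); only the $\F_{2}$ extension, which involves no $\Psi$, works with the graph-norm sequence alone as you wrote it.
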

\begin{proof}
Note that the proof of the inequality for $\F_{2}$ in \cite[Lemma 5]{Andrieu18}
does not rely upon the Poincaré inequality so we may use the same
proof to obtain for all $g\in\D(\L)$,
\[
\F_{2}(g)\leq\lVert(\Id-\Pi_{v})g\rVert_{2}^{2}.
\]
Fix $g\in\C$ and using that $\T$ is antisymmetric we have that $\langle\L g,g\rangle_{2}=\langle\S g,g\rangle_{2}$
and by (\ref{eq:wp_2}) we have for $r>0$,
\[
\F_{1}(g)\leq-\frac{1}{\alpha_{2}(r)}\|(\Id-\Pi_{v})g\|_{2}^{2}+\frac{r}{\alpha_{2}(r)}\Psi(g).
\]
To extend this to $\D(\L)$ fix $f\in\D(\L)$ and let $\left\{ f_{n}\right\} _{n}\subseteq\C$
be as in Condition \ref{cond:Psi}, then we have 
\begin{align*}
\F_{1}(f) & \le\limsup_{n\to\infty}\F_{1}(f_{n})\\
 & \leq\limsup_{n\to\infty}\left(-\frac{1}{\alpha_{2}(r)}\|(\Id-\Pi_{v})f_{n}\|_{2}^{2}+\frac{r}{\alpha_{2}(r)}\Psi(f_{n})\right).
\end{align*}
Now since $\Id-\Pi_{v}$ is bounded and $\limsup_{n\to\infty}\Psi(f_{n})\leq\Psi(f)$
we obtain
\begin{align*}
\F_{1}(f) & \leq-\frac{1}{\alpha_{2}(r)}\|(\Id-\Pi_{v})f\|_{2}^{2}+\frac{r}{\alpha_{2}(r)}\Psi(f).
\end{align*}
Now consider $\F_{3}$, for any $g\in\C\subseteq\D(\L)\cap\D(\L^{*})\cap\D(\T\Pi_{v})$
we have by Condition~\ref{cond:R0-rem},
\begin{align*}
\F_{3}(g) & =\langle\Ab\T\Pi_{v}g,g\rangle_{2}+\langle\Ab\T(\Id-\Pi_{v})g,g\rangle_{2}+\langle\Ab\S g,g\rangle_{2}\\
 & \leq\langle\Pi_{v}g,\Pi_{v}\T^{*}\A^{*}g\rangle_{2}+R_{0}\lVert(\Id-\Pi_{v})g\rVert_{2}\lVert\Pi_{v}g\rVert_{2}.
\end{align*}
This inequality can be extended to $g\in\D(\L)$ since $\C$ is dense
in $\D(\L)$ and $\bar{\mathcal{A}}\overline{\T\Pi_{v}}$ is bounded,
and $\Pi_{v}$ and $\Id-\Pi_{v}$ are bounded. From Lemma \ref{lem:weakPoincaretoAPoincare},
we have
\begin{equation}
\langle\Pi_{v}g,\Pi_{v}\T^{*}\A^{*}g\rangle_{2}\leq-\frac{1}{m_{2}+\alpha_{1}(r)}\lVert\Pi_{v}g\rVert_{2}^{2}+\frac{r}{m_{2}+\alpha_{1}(r)}\Psi(g),\label{eq:F_3_ineq}
\end{equation}
for $g\in\C\subseteq\D(\overline{\T\Pi_{v}})$. Note that (\ref{eq:F_3_ineq})
can be extended to $f\in\D(\L)$: fix $f\in\D(\L)$ and let $\left\{ f_{n}\right\} _{n}\subseteq\C$
be as in Condition~\ref{cond:Psi}. Then (\ref{eq:F_3_ineq}) holds
for each $f_{n}$ and can be extended to $f$, since $\bar{\mathcal{A}}\overline{\T\Pi_{v}}$
is bounded and by (\ref{eq:Psi_limsup}). Finally therefore, putting
the pieces together we can conclude that for each $g\in\D(\L)$,
\[
\F_{3}(g)\leq-\frac{1}{m_{2}+\alpha_{1}(r)}\lVert\Pi_{v}g\rVert_{2}^{2}+\frac{r}{m_{2}+\alpha_{1}(r)}\Psi(g)+R_{0}\lVert(\Id-\Pi_{v})g\rVert_{2}\lVert\Pi_{v}g\rVert_{2}.
\]

\end{proof}
\begin{proof}[Proof of Theorem \ref{thm:abstracthypo}]
 We combine approach of \cite[Theorem 4]{Andrieu18} with that of
\cite[Theorem 2.1]{Grothaus19}. Without loss of generality we may
assume $\mu(f)=0.$ Let us define for any $\epsilon>0$ and $g\in\ELL(\mu)$,
\[
\mathscr{H}_{\epsilon}(g):=\frac{1}{2}\| g\|_{2}^{2}+\epsilon\langle g,\bar{\mathcal{A}}g\rangle.
\]
As in \cite{Andrieu18}, we have the equivalence, for any $0<\epsilon<(m_{2}/2)^{1/2}$
and $g\in\ELL(\mu)$, 
\begin{equation}
\frac{1-(m_{2}/2)^{-1/2}\epsilon}{2}\| g\|_{2}^{2}\le\mathscr{H}_{\epsilon}(g)\le\frac{1+(m_{2}/2)^{-1/2}\epsilon}{2}\| g\|_{2}^{2}.\label{eq:equiv_norms}
\end{equation}
For $f\in\ELL(\mu)$ let us write for convenience, $f_{t}:=P_{t}f$
for each $t\ge0$. Then from the Dynkin formula we know that $f_{t}\in\D(\L)$
and $\dif f_{t}/\dif t=\L f_{t}$ for each $t>0$. Then, we can use
Lemma \ref{lem:Fineqs} to obtain
\begin{align*}
-\frac{\dif}{\dif t}\mathscr{H}_{\epsilon}(f_{t}) & =-[\mathscr{F}_{1}(f_{t})+\epsilon\{\mathscr{F}_{2}(f_{t})+\mathscr{F}_{3}(f_{t})\}]\\
 & \ge\left(\frac{1}{\alpha_{2}(r_{2})}-\epsilon\right)\|(\Id-\Pi_{v})f_{t}\|_{2}^{2}+\frac{\epsilon}{m_{2}+\alpha_{1}(r_{1})}\|\Pi_{v}f_{t}\|_{2}^{2}\\
 & -\left(\frac{r_{2}}{\alpha_{2}(r_{2})}+\frac{\epsilon r_{1}}{m_{2}+\alpha_{1}(r_{1})}\right)\Psi(f_{t})-\epsilon R_{0}\|(\Id-\Pi_{v})f_{t}\|_{2}\|\Pi_{v}f_{t}\|_{2}.
\end{align*}
We now follow the calculations in the proof of \cite[Theorem 2.1]{Grothaus19}.
Our approach is very similar, however we obtain slightly better bounds
(from our Lemma \ref{lem:Fineqs}) which lead to slightly better constants
in the end; hence we include a full proof. We use Young's inequality
to bound the cross term 
\[
\epsilon R_{0}\|(\Id-\Pi_{v})f_{t}\|_{2}\|\Pi_{v}f_{t}\|_{2}\le\frac{\epsilon\|\Pi_{v}f_{t}\|_{2}^{2}}{2(\alpha_{1}(r_{1})+m_{2})}+\frac{\epsilon R_{0}^{2}(\alpha_{1}(r_{1})+m_{2})}{2}\|(\Id-\Pi_{v})f_{t}\|_{2}^{2}.
\]
This gives us
\begin{align*}
-\frac{\dif}{\dif t}\mathscr{H}_{\epsilon}(f_{t})\ge & \left(\frac{1}{\alpha_{2}(r_{2})}-\frac{\epsilon R_{0}^{2}(\alpha_{1}(r_{1})+m_{2})}{2}-\epsilon\right)\|(\Id-\Pi_{v})f_{t}\|_{2}^{2}\\
 & +\frac{\epsilon}{2(\alpha_{1}(r_{1})+m_{2})}\|\Pi_{v}f_{t}\|_{2}^{2}-\left(\frac{r_{2}}{\alpha_{2}(r_{2})}+\frac{\epsilon r_{1}}{m_{2}+\alpha_{1}(r_{1})}\right)\Psi(f_{t}).
\end{align*}
Now we take 
\begin{equation}
\epsilon=\frac{1}{\alpha_{2}(r_{2})[R_{0}^{2}(\alpha_{1}(r_{1})+m_{2})+2]}<\frac{1}{2},\label{eq:epsilon}
\end{equation}
and using the fact that $\| f_{t}\|_{2}^{2}=\|\Pi_{v}f_{t}\|_{2}^{2}+\|(\Id-\Pi_{v})f_{t}\|_{2}^{2}$
and (\ref{eq:Psi_ineqs}) we get
\begin{align*}
-\frac{\dif}{\dif t}\mathscr{H}_{\epsilon}(f_{t})\ge & \frac{1}{2\alpha_{2}(r_{2})}\|(\Id-\Pi_{v})f_{t}\|_{2}^{2}\\
 & +\frac{1}{2\alpha_{2}(r_{2})(\alpha_{1}(r_{1})+m_{2})[R_{0}^{2}(\alpha_{1}(r_{1})+m_{2})+2]}\|\Pi_{v}f_{t}\|_{2}^{2}\\
 & -\left(\frac{r_{2}}{\alpha_{2}(r_{2})}+\frac{r_{1}}{(m_{2}+\alpha_{1}(r_{1}))\alpha_{2}(r_{2})[R_{0}^{2}(\alpha_{1}(r_{1})+m_{2}]+2)}\right)\Psi(f_{t})\\
\ge & \frac{1}{4R_{0}^{2}\alpha_{2}(r_{2})(\alpha_{1}(r_{1})+m_{2})^{2}}\| f_{t}\|_{2}^{2}\\
 & -\left(\frac{r_{2}}{\alpha_{2}(r_{2})}+\frac{r_{1}}{R_{0}^{2}\alpha_{2}(r_{2})(\alpha_{1}(r_{1})+m_{2})^{2}}\right)\Psi(f_{t}).
\end{align*}
Now since $\epsilon<1/2$ we use (\ref{eq:equiv_norms}) to see that
for $g\in\ELL(\mu)$
\[
\| g\|_{2}^{2}\ge\frac{4\mathscr{H}_{\epsilon}(g)}{2+(m_{2}/2)^{-1/2}},
\]
so 
\begin{align*}
-\frac{\dif}{\dif t}\mathscr{H}_{\epsilon}(f_{t})\ge & \frac{\mathscr{H}_{\epsilon}(f_{t})}{(2+(m_{2}/2)^{-1/2})R_{0}^{2}\alpha_{2}(r_{2})(\alpha_{1}(r_{1})+m_{2})^{2}}\\
 & -\left(\frac{r_{2}}{\alpha_{2}(r_{2})}+\frac{r_{1}}{R_{0}^{2}\alpha_{2}(r_{2})(\alpha_{1}(r_{1})+m_{2})^{2}}\right)\Psi(f_{t}).
\end{align*}
So by Gronwall's lemma, for $t\geq0$,
\begin{align*}
\mathscr{H}_{\epsilon}(f_{t})\le & \exp\left[-\frac{t}{(2+(m_{2}/2)^{-1/2})R_{0}^{2}\alpha_{2}(r_{2})(\alpha_{1}(r_{1})+m_{2})^{2}}\right]\mathscr{H}_{\epsilon}(f)\\
 & +\left[r_{2}(2+(m_{2}/2)^{-1/2})R_{0}^{2}(\alpha_{1}(r_{1})+m_{2})^{2}+r_{1}(2+(m_{2}/2)^{-1/2})\right]\Psi(f).
\end{align*}
Now we choose $r_{1}=r$, $r_{2}=r/\alpha_{1}(r_{1})^{2}$, and then
using that $m_{2}\ge1$ (Assumption~\ref{assu:V_nu}) and (\ref{eq:equiv_norms}),
\[
\| f_{t}\|_{2}^{2}\le c_{1}\exp\left[-\frac{c_{2}t}{\alpha_{1}(r)^{2}\alpha_{2}\left(r/\alpha_{1}(r)^{2}\right)}\right]\| f\|_{2}^{2}+c_{1}r\Psi(f),
\]
for $r>0,$ $f\in\D(\L)$, $t\ge0$.  Here we can take for $\epsilon$
as defined in (\ref{eq:epsilon}),
\begin{align}
c_{1} & =\frac{2\max\left\{ \frac{1+\epsilon(m_{2}/2)^{-1/2}}{2},(2+(m_{2}/2)^{-1/2})\left[R_{0}^{2}(1+m_{2})^{2}+1\right]\right\} }{1-\epsilon(m_{2}/2)^{-1/2}},\nonumber \\
c_{2} & =\frac{1}{R_{0}^{2}(2+(m_{2}/2)^{-1/2})(1+m_{2})^{2}}.\label{eq:expression_c1_c2}
\end{align}
Thus we can conclude that (\ref{eq:Pt_exp}) holds with $\xi$ as
in (\ref{eq:xi_t}) for some $c_{1},c_{2}>0$.%
\end{proof}

\section{Application to PDMPs}

\label{sec:Application-to-PDMPs}

In this case the operator $\L$ acts on smooth functions in $\C=\mathrm{C}_{b}^{2}(\E)$
as follows,
\begin{equation}
\L f(x,v)=v^{\top}\nabla_{x}f(x,v)+\sum_{k=1}^{K}\lambda_{k}(x,v)(\bounce_{k}-\Id)f(x,v)+m_{2}^{1/2}\lambda_{\refr}(x)\Rv f(x,v).\label{eq:LPDMPdef}
\end{equation}
In which case we have that the operators $\S$ and $\T$ are given
for functions $f\in\C$ by
\begin{align*}
\S f(x,v) & =\frac{1}{2}\sum_{k=1}^{K}\lambda_{k}^{\mathrm{e}}(x,v)(\bounce_{k}-\Id)f(x,v)+m_{2}^{1/2}\lambda_{\refr}(x)\Rv f(x,v),\\
\T f(x,v) & =v^{\top}\nabla_{x}f(x,v)+\frac{1}{2}\sum_{k=1}^{K}v^{\top}F_{k}(\bounce_{k}-\Id)f(x,v).
\end{align*}
Where, $\lambda_{k}^{\mathrm{e}}(x,v):=\lambda_{k}(x,v)+\lambda_{k}(x,-v)$,
for $k\in\{1,\dots,K\}$ and $(x,v)\in\E$. Recall that $G:=(\T\Pi_{v})^{*}\overline{\T\Pi_{v}}$
with $\D(G)=\{f\in\D(\T\Pi_{v}):\T\Pi_{v}f\in\D((\T\Pi_{v})^{*})\}$.
By \cite[Lemma 9(b)]{Andrieu18}, we have that $\mathrm{C}_{b}^{2}(\E)\subset\D(G)$
and for any $f\in\mathrm{C}_{b}^{2}(\E)$, $Gf=m_{2}\nabla_{x}^{*}\nabla_{x}\Pi_{v}f$,
here $\nabla^{*}$ is the adjoint of $\nabla$ as an operator from
$\D(\nabla)\subseteq\ELL(\pi)\to\mathrm{L}_{\pi}^{2}(\X;\R^{d})$.
We take $\Psi=\|\cdot\|_{\osc}^{2}$. Our goal in the section is the
following theorem, which will follow from Corollary \ref{cor:abstract}
once we have checked that the abstract conditions hold.
\begin{thm}
\label{thm:PDMP} Assume that our basic Condition \ref{cond:basic-core}
and our PDMP Assumptions \ref{assu:U}, \ref{assu:F_k}, \ref{assu:event_rate},
\ref{assu:V_nu}, \ref{assu:refreshment}, \ref{assu:The-refreshment-rate}
hold. Then we have convergence of the semigroup,
\[
\| P_{t}f\|_{2}^{2}\le\xi(t)\left(\| f\|_{2}^{2}+\| f\|_{\osc}^{2}\right),\quad\forall t\ge0,f\in\ELL(\mu),
\]
where 
\[
\xi(t):=c_{1}\inf\left\{ r>0:c_{2}t\ge\alpha_{1}(r)^{2}\log1/r\right\} ,
\]
for a decreasing function $\alpha_{1}:(0,\infty)\to[1,\infty)$ and
some constants $c_{1},c_{2}>0$. In particular, $c_{2}$ may be taken
to be
\[
c_{2}=\frac{\underline{\lambda}m_{2}^{1/2}}{R_{0}^{2}(2+(m_{2}/2)^{-1/2})(1+m_{2})^{2}}.
\]
\end{thm}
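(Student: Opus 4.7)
The plan is to verify that the stated PDMP assumptions imply each of the abstract Conditions \ref{cond:closable}--\ref{cond:weak_poincare} with core $\C=\mathrm{C}_{b}^{2}(\E)$ and functional $\Psi=\|\cdot\|_{\osc}^{2}$, to upgrade (\ref{eq:wp_2}) to the strong coercivity (\ref{eq:strongPI}), and then to invoke Corollary~\ref{cor:abstract}. The precise expression for $c_{2}$ claimed in the theorem will drop out of $c'_{2}=c_{2}^{\mathrm{abs}}/C_{\mathrm{P}}$ once the right constant $C_{\mathrm{P}}$ is identified.

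First I would dispatch the projection-type conditions directly. Condition~\ref{cond:closable} is immediate, since $\Pi_{v}f$ is a smooth bounded function of $x$ alone. For Condition~\ref{cond:proj}(\ref{enu:3(a)}), the bounce-invariance of $\nu$ from Assumption~\ref{assu:V_nu}(\ref{enu:4(b)For-any-}) combined with $\Pi_{v}\Rv h=0$ gives $\Pi_{v}\S h=0$ for every $h\in\C$, which by duality places $\mathrm{Ran}(\Pi_{v})$ in $\mathrm{Ker}(\S^{*})$. Condition~\ref{cond:proj}(\ref{enu:3(b)For-any-,}) holds because $\T\Pi_{v}f=v^{\top}\nabla_{x}\Pi_{v}f$ (the bounce part of $\T$ vanishes on $x$-dependent functions) and Assumption~\ref{assu:V_nu}(\ref{enu:4(c)For-any-bounded}) forces $\nu$ to have zero mean.

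The key coercivity estimate comes from the refreshment: the bounce part of $-\S$ is nonnegative thanks to Assumption~\ref{assu:V_nu}(\ref{enu:4(b)For-any-}), and Assumption~\ref{assu:The-refreshment-rate} then yields
\[
\langle-\S f,f\rangle_{2}\;\ge\;m_{2}^{1/2}\underline{\lambda}\,\langle(\Id-\Pi_{v})f,f\rangle_{2}\;=\;m_{2}^{1/2}\underline{\lambda}\,\|(\Id-\Pi_{v})f\|_{2}^{2},
\]
so (\ref{eq:strongPI}) holds with $C_{\mathrm{P}}=(m_{2}^{1/2}\underline{\lambda})^{-1}$. For (\ref{eq:wp_1}), Assumption~\ref{assu:V_nu}(\ref{enu:4(c)For-any-bounded}) yields $\|\T\Pi_{v}f\|_{2}^{2}=m_{2}\|\nabla_{x}\Pi_{v}f\|_{\pi}^{2}$, reducing (\ref{eq:wp_1}) to a weak Poincar\'e inequality for $\pi$ with $\osc$ remainder; under Assumption~\ref{assu:U} this is supplied by the results of \cite{Rockner01}, as used in Example~\ref{exa:targetscont}. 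For Condition~\ref{cond:Psi} with $\Psi=\|\cdot\|_{\osc}^{2}$, contractivity under $P_{t}$ and $\Pi_{v}$ is immediate from the Markov property and Jensen's inequality, while contractivity under $\mathrm{e}^{-tG}$ follows from the identification $Gf=m_{2}\nabla_{x}^{*}\nabla_{x}\Pi_{v}f$, which generates a Markov heat-type semigroup acting trivially in $v$; the approximation clause in (\ref{eq:Psi_limsup}) is standard once $\C=\mathrm{C}_{b}^{2}(\E)$ is a core.

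The main obstacle is Condition~\ref{cond:R0-rem}, which requires the cross-term bound on $\bigl|\langle\bar{\A}\T(\Id-\Pi_{v})f,f\rangle_{2}+\langle\bar{\A}\S f,f\rangle_{2}\bigr|$ by $R_{0}\|(\Id-\Pi_{v})f\|_{2}\|\Pi_{v}f\|_{2}$. Since $\bar{\A}$ is morally the resolvent $(m_{2}\Id+\nabla_{x}^{*}\nabla_{x})^{-1}\nabla_{x}^{*}$ composed with the projection onto $v$-independent functions, one needs uniform control of $\bar{\A}g$ and $\nabla_{x}\bar{\A}g$ strong enough to absorb the $F_{k}$- and $\lambda_{\refr}$-dependent factors arising in $\T$ and $\S$, via Assumptions~\ref{assu:F_k}(\ref{enu:2(c)for-all-,}) and~\ref{assu:The-refreshment-rate}. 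This is where the (quasi) potential-independent smoothness estimates of \cite{Lorenzi06} for the Poisson equation of the overdamped Langevin diffusion---flagged in the Contribution subsection as a main technical novelty relative to \cite{Andrieu18}---are brought to bear. Finally, the approximation hypothesis (\ref{eq:cor_f_n_seq}) of Corollary~\ref{cor:abstract} for $\Psi=\|\cdot\|_{\osc}^{2}$ is handled by a truncation-and-mollification argument that preserves the $\osc$ seminorm; substituting $C_{\mathrm{P}}=(m_{2}^{1/2}\underline{\lambda})^{-1}$ into $c'_{2}=c_{2}^{\mathrm{abs}}/C_{\mathrm{P}}$, with $c_{2}^{\mathrm{abs}}$ from (\ref{eq:expression_c1_c2}), then yields the stated expression for $c_{2}$.
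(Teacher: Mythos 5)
Your overall route is the same as the paper's: check the abstract Conditions with $\C=\mathrm{C}_{b}^{2}(\E)$ and $\Psi=\|\cdot\|_{\osc}^{2}$, obtain the strong coercivity (\ref{eq:strongPI}) from the refreshment term with $C_{\mathrm{P}}=(\underline{\lambda}m_{2}^{1/2})^{-1}$, get (\ref{eq:wp_1}) from the weak Poincar\'e inequality of \cite{Rockner01} via $\|\T\Pi_{v}f\|_{2}^{2}=m_{2}\|\nabla_{x}\Pi_{v}f\|_{2}^{2}$, and conclude through Corollary \ref{cor:abstract}, which indeed produces the stated $c_{2}$. Your treatment of Conditions \ref{cond:closable}, \ref{cond:proj}, \ref{cond:weak_poincare} and of the contractivity part of Condition \ref{cond:Psi} matches what is done in Section \ref{sec:Application-to-PDMPs} (the paper routes some of these through Lemma 9 and Proposition 10 of \cite{Andrieu18}, but your direct arguments are equivalent).

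The genuine gap is Condition \ref{cond:R0-rem}, which you correctly identify as the main obstacle but then leave as a gesture towards \cite{Lorenzi06}. The theorem's explicit $c_{2}$ requires an explicit $R_{0}$, and producing it is the bulk of the paper's work: (i) one must first justify that $\bar{\A}$ really acts through the Poisson equation (\ref{eq:Poisson-eq}), i.e.\ that $\{m_{2}\mathrm{Id}+(\T\Pi_{v})^{*}\overline{\T\Pi_{v}}\}^{-1}=m_{2}\{\mathrm{Id}+\nabla_{x}^{*}\nabla_{x}\Pi_{v}\}^{-1}$; the argument of \cite[Lemma 9(c)]{Andrieu18} uses a density argument through polynomially growing functions and hence moments of $\pi$ that heavy-tailed targets need not have, so the paper reproves this via the $\mathrm{H}^{2}$-density result of \cite{Lorenzi06} together with the expression for $\nabla_{x}^{*}$ (Lemma \ref{lem:nabla-star}); (ii) one needs quantitative $\ELL(\pi)$ bounds on $u_{f}$, $\nabla_{x}u_{f}$, $\nabla_{x}^{2}u_{f}$ and, crucially, $(\nabla_{x}U)^{\top}\nabla_{x}u_{f}$ (the constants $\kappa_{1}=\sqrt{2(2+c_{U})}$ and $\kappa_{2}$ of Lemma \ref{lem:Poissonequestimates}); the second-derivative bound is not simply quoted from \cite{Lorenzi06} but obtained by revisiting its proof with a cut-off argument, and the $\kappa_{2}$ bound splits according to which alternative of Assumption \ref{assu:U}-\ref{enu:nablaU-summable} holds, using the integration-by-parts inequality (\ref{eq:boundforphinablaU}) driven by (\ref{eq:boundonU}); and (iii) one must reduce the cross term $\bigl|\langle\bar{\A}\T(\Id-\Pi_{v})f,f\rangle_{2}+\langle\bar{\A}\S f,f\rangle_{2}\bigr|$ to the norms $\|F_{k}^{\top}\nabla_{x}u_{f}\|_{2}$ and $\|\lambda_{\refr}\nabla_{x}u_{f}\|_{2}$, which is Lemma \ref{lem:Remaindertoellipticity} (Lemmas 12--13 of \cite{Andrieu18}), before Assumptions \ref{assu:F_k}-\ref{enu:2(c)for-all-,} and \ref{assu:The-refreshment-rate} can absorb the potential-dependent factors to give the $R_{0}$ of Corollary \ref{cor:R0-PDMP}. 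Without these three ingredients your proposal asserts the existence of some $R_{0}$ rather than establishing it, and the claimed value of $c_{2}$ is not yet justified. A smaller but real omission: for (\ref{eq:Psi_limsup}) the approximation is not ``standard'' graph-norm density, since truncating $g_{n}$ to control $\|\cdot\|_{\osc}$ must not increase the Dirichlet-form term; the paper handles this by composing with $1$-Lipschitz cut-offs $h_{n}$ and using the explicit quadratic-form expression for $\langle-\S f_{n},f_{n}\rangle$, and the same construction is what supplies (\ref{eq:cor_f_n_seq}).
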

\begin{example}[Central limit theorems.]
\label{exa:CLT}From our results in Example \ref{exa:5}, we can
apply Corollary \ref{cor:general-clt} to our running examples to
see that a central limit theorem holds for $f\in\ELL(\mu)$ such that
$\| f\|_{{\rm osc}}<\infty$,
\begin{enumerate}
\item for $U(x)=\frac{1}{2}(d+p)\log\left(1+|x|^{2}\right)$ whenever $p$
is large enough so that $\tau<1/2$, where $\tau$ is defined later
in (\ref{eq:alphaforpowertarget}),
\item for $U(x)=\sigma|x|{}^{\delta}$ for any $\sigma>0$, $0<\delta<1$.
\end{enumerate}
\end{example}

\subsection{Checking Condition \ref{cond:basic-core}}

In \cite{Andrieu18} it is argued that the BPS and the ZZ processes
are both well-defined Markov processes satisfying Condition \ref{cond:basic-core}
with $\C=\ctb(\E)$ as a core (see their remarks after their Corollary
2). In order to help the reader we provide here a brief overview of
existing theoretical results which have been used to establish a similar
property, and can be adapted to establish Condition \ref{cond:basic-core}.
For the BPS, it is shown in \cite{Durmus2018}, in full detail, that
$\mathrm{C}_{\mathrm{c}}^{1}(\E)$ is core for its generator on $\mathrm{C}_{0}(\E)$,
the set of continuous functions vanishing at infinity. This relies
on a stability property of $\mathrm{C}^{1}(\E)$, the set of continuously
differentiable functions, under the semigroup \cite[Lemma 17]{Durmus2018},
which by using \cite[Proposition 3.3]{ethier2009markov} implies the
core property. As remarked in \cite[Remark 18]{Durmus2018}, \cite[Lemma 17]{Durmus2018}
can be extended to cover stability of $\mathrm{C}_{\mathrm{c}}^{k}(\E)$
for $k\geq2$ and an application of \cite[Proposition 3.3]{ethier2009markov}
leads to the desired conclusion for the generator on $\mathrm{C}_{0}(\E)$.
Crucially \cite[Lemma 17]{Durmus2018} requires the intensity to belong
to $\mathrm{C}^{1}(\E)$, a property not satisfied by the standard
BPS or ZZ when using the ``canonical'' choice of intensity. This
is however relaxed for BPS by utilising their \cite[Theorem 21]{Durmus2018}
in conjunction with \cite[Propositions 9 and 23]{Durmus2018}. In
\cite[Theorem 5.11]{Andrieu19} these ideas are used in the context
of the ZZ process to establish in detail that $\mathrm{C}_{\mathrm{c}}^{1}(\E)$
is a core for the generator on $\ELL(\mu)$, assuming that the intensities
involved belong to $\mathrm{C}^{1}(\E)$--this extends directly to
the scenario where $\mathrm{C}^{1}(\E)$ is replaced with $\mathrm{C}^{2}(\E)$.
We note that \cite[Proposition 5.17]{Andrieu19} defines a family
of smooth intensities, $\mathrm{C}^{2}(\E)$ under Condition \ref{assu:U},
uniformly converging to the popular canonical choice, so we may apply
\cite[Proposition 11 and 27]{Durmus2018} to get that $\mathrm{C}_{\mathrm{c}}^{2}(\E)$
is a core for the generator in $\mathrm{C}_{0}(\E)$ of ZZ with the
canonical intensity. Since the semigroup preserves the domain of the
generator in $\mathrm{C}_{0}(\E)$, when we extend the semigroup to
$\ELL(\mu)$ we have that the domain of the generator is a core for
the generator in $\ELL(\mu)$ by using \cite[Proposition 3.3]{ethier2009markov}.
Then since convergence in $\mathrm{C}_{0}(\E)$ implies convergence
in $\ELL(\mu)$ we have that $\mathrm{C}_{\mathrm{c}}^{2}(\E)$ is
a core for the generator of ZZ with the canonical intensity in $\ELL(\mu)$.
This immediately implies that $\ctb(\E)$ is a core in $\ELL(\mu)$
as desired. See also \cite{Bierkens2019Spec} for a direct approach
in the one-dimensional ZZ case.

\subsection{Checking Condition \ref{cond:closable} and \ref{cond:proj}}

Condition \ref{cond:closable} follows from Lemma 9 of \cite{Andrieu18}.
Condition \ref{cond:proj}-\ref{enu:3(a)} is immediate from the definition
of $\S$. Finally, Condition \ref{cond:proj}-\ref{enu:3(b)For-any-,}
follows from Lemma 9 of \cite{Andrieu18}.

\subsection{Checking Condition \ref{cond:weak_poincare}: weak Poincaré inequalities
\label{subsec:Checking-ConditionWPI}}

To establish weak Poincaré inequalities, our starting point is \cite{Rockner01},
as in \cite{Grothaus19}. \cite[Theorem 3.1]{Rockner01} and the subsequent
remark allow us to deduce that there exists decreasing functions $\alpha_{1},\alpha_{2}:(0,\infty)\to[1,\infty)$
such that the weak Poincaré inequalities hold
\begin{align}
\pi(f^{2})-\pi(f)^{2} & \le\alpha_{1}(r)\pi(|\nabla_{x}f|^{2})+r\| f\|_{\osc}^{2},\forall f\in\mathrm{C}_{\mathrm{b}}^{1}(\X),r>0.\label{eq:weakPI}
\end{align}
We need to show conditions (\ref{eq:wp_1}) and (\ref{eq:strongPI})
hold. First by \cite[Lemma 9 (b)]{Andrieu18} we have for any $f\in\C$,
\[
(\T\Pi_{v})^{*}(\T\Pi_{v})f=m_{2}\nabla_{x}^{*}\nabla_{x}\Pi_{v}f.
\]
Multiplying by $\Pi_{v}f$ and integrating we obtain
\[
\lVert(\T\Pi_{v})f\rVert_{2}^{2}=m_{2}\lVert\nabla\Pi_{v}f\rVert_{2}^{2}.
\]
Therefore substituting the above expression into (\ref{eq:weakPI})
we have
\[
\lVert\Pi_{v}f-\mu(f)\rVert_{2}^{2}\le\frac{\alpha_{1}(r)}{m_{2}}\lVert\T\Pi_{v}f\rVert_{2}^{2}+r\| f\|_{\osc}^{2},\forall f\in\mathrm{C}_{\mathrm{b}}^{1}(\E),r>0.
\]
Thus we have (\ref{eq:wp_1}). Note that (\ref{eq:strongPI}) follows
immediately from \cite[Proposition 10]{Andrieu18}, since we have
the same refreshment mechanism. Indeed, we can obtain for $f\in\C$,
\[
\|(\Id-\Pi_{v})f\|_{2}^{2}\le\frac{1}{\underline{\lambda}m_{2}^{1/2}}\langle-\S f,f\rangle.
\]

\begin{example}[Example \ref{exa:targets} continued]
\label{exa:targetscont} By \cite[Theorem 3.2]{Rockner01} we have
that (\ref{eq:weakPI}) holds and 
\[
\alpha(r)=\frac{4R_{r}^{2}}{\pi^{2}}e^{\delta_{R_{r}}(U)}
\]
where $R_{r}:=\inf\{s>0:\pi(B_{s}^{c})\leq r/(1+r)\}$, $\delta_{R}(U)=\sup_{\{x,y\in B_{R}\}}U(x)-U(y)$
and $B_{R}=\{x\in\R^{d}:|x|\leq R\}$. We shall exhibit $\alpha$
for the two cases considered in Example \ref{exa:targets}
\begin{enumerate}
\item If $U(x)=\frac{1}{2}(d+p)\log\left(1+|x|^{2}\right)$, for some $p>0$
then by \cite[Example 1.4(a)]{Rockner01} we have that (\ref{eq:weakPI})
holds with
\begin{align}
\alpha_{1}(r) & =c(1+r^{-\tau})\nonumber \\
\tau= & \min\left\{ \frac{d+p+2}{p},\frac{4p+4+2d}{[p^{2}-4-2d-2p]^{+}}\right\} ,\label{eq:alphaforpowertarget}
\end{align}
where $c$ is a universal constant independent of dimension and $r$.
Note that \cite[Example 1.4(a)]{Rockner01} considers a slightly different
function, $V(x)=(d+p)\log\left(1+|x|\right)$. However, the difference
between these functions is bounded so if (\ref{eq:weakPI}) holds
for $V$, then it also holds for $U$.
\item If $U(x)=\sigma|x|{}^{\delta}$ then by \cite[Example 1.4 (c)]{Rockner01}
we have that (\ref{eq:weakPI}) holds with 
\begin{equation}
\alpha_{1}(r)=c\left[1+\log(1+r^{-1})\right]^{4(1-\delta)/\delta}.\label{eq:alphaforsubexpo}
\end{equation}
\end{enumerate}
We detail convergence rates these potentials lead to in Section \ref{sec:Examples}.
\end{example}

\subsection{Checking Condition \ref{cond:R0-rem}: finding $R_{0}$}

The most difficult part of the proof is checking Condition \ref{cond:R0-rem}
which will control the remainder terms. For light-tailed targets in
\cite{Andrieu18} this is done by showing that solutions of the Poisson
equation
\begin{equation}
m_{2}(\mathrm{Id}+\nabla_{x}^{*}\nabla_{x})u_{f}=\Pi_{v}f,\label{eq:Poisson-eq}
\end{equation}
have polynomially growing derivatives. This implies in their setting
that they are $\pi$-integrable, however for heavy-tailed measures
$\pi$ this is not sufficient. By using Schauder estimates we know
that the solution $u_{f}$ of the Poisson equation is twice differentiable,
however we do not know in general that the derivatives are $\pi$-integrable.
By multiplying the solution by smooth cut-off functions it is shown
in \cite{Lorenzi06} that under Assumption \ref{assu:U} the first
and second derivatives are in $\ELL(\pi)$. We can write down the
solution as
\[
u_{f}=m_{2}^{-1}(\mathrm{Id}+\nabla_{x}^{*}\nabla_{x})^{-1}\Pi_{v}f,
\]
since $\nabla_{x}$ is a densely-defined closed operator on $\ELL(\pi)$,
\cite[Proposition 26]{Andrieu18} shows that $(\mathrm{Id}+\nabla_{x}^{*}\nabla_{x})^{-1}$
is a positive self-adjoint bounded operator on $\ELL(\pi)$, which
furthermore is a bijection between $\ELL(\pi)$ and $\mathrm{D}(\nabla_{x}^{*}\nabla_{x})$.
In our case, we will utilize the powerful abstract result of \cite{Lorenzi06}.

We also remark here that our subsequent argument patches a minor omission
in \cite{Grothaus19}, in their proof of (H3) for degenerate diffusions.
In \cite{Grothaus19}, the authors reference \cite{Dolbeault15} and
\cite[Section 5.1]{Grothaus14sti} for elliptic a priori estimates.
However, the cited references assume the existence of a strong Poincaré
inequality, which precisely falls outside the scope of the processes
under consideration.
\begin{lem}
\label{lem:Poissonequestimates}Under our Assumption \ref{assu:U}
for any $f\in\ctb(\mathsf{E})$,
\begin{enumerate}
\item the solution $u_{f}$ of the Poisson equation (\ref{eq:Poisson-eq})
is uniquely defined and $u_{f}\in\mathsf{\mathrm{C}}^{3}(\X)$;
\item there exist some $\kappa_{1},\kappa_{2}>0$ such that: 
\begin{equation}
\max\left\{ \lVert u_{f}\rVert_{2},\lVert\nabla_{x}u_{f}\rVert_{2}\right\} \leq m_{2}^{-1}\lVert\Pi_{v}f\rVert_{2},\label{eq:derPE}
\end{equation}
\begin{align}
\|\nabla_{x}^{2}u_{f}\|_{2} & \le m_{2}^{-1}\kappa_{1}\|\Pi_{v}f\|_{2},\label{eq:secondderPE}
\end{align}
\begin{equation}
\lVert(\nabla_{x}U)^{\top}\nabla_{x}u_{f}\rVert_{2}\leq m_{2}^{-1}\kappa_{2}\lVert\Pi_{v}f\rVert_{2}.\label{eq:innerprodderPE}
\end{equation}
Here 
\[
\kappa_{1}=\sqrt{2(2+c_{U})},
\]
and if $|\nabla_{x}U|$ is bounded then we may take $\kappa_{2}=\sup_{x\in\X}\lvert\nabla_{x}U(x)\rvert$,
otherwise $\kappa_{2}=\sqrt{4(4\kappa_{1}+C_{U}d^{1+\omega})}$, where
$\omega,c_{U}$ and $C_{U}$ are as in Assumption~\ref{assu:U}.
\end{enumerate}
\end{lem}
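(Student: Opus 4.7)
Part (a) is essentially set up by the remarks preceding the statement: since $(\mathrm{Id}+\nabla_{x}^{*}\nabla_{x})$ is a positive self-adjoint bijection from $\ELL(\pi)$ onto its domain, $u_{f}=m_{2}^{-1}(\mathrm{Id}+\nabla_{x}^{*}\nabla_{x})^{-1}\Pi_{v}f$ is well-defined and unique in $\ELL(\pi)$. To promote this weak solution to a classical $\mathrm{C}^{3}$ one, I would rewrite the Poisson equation pointwise as the uniformly elliptic equation
\[
m_{2}\bigl(u_{f}-\Delta_{x}u_{f}+\nabla_{x}U\cdot\nabla_{x}u_{f}\bigr)=\Pi_{v}f,
\]
whose first-order coefficient lies in $\mathrm{C}^{1+\alpha}$ by Assumption \ref{assu:U}-\ref{enu:UisC2plusalpha} and whose right-hand side $\Pi_{v}f$ is $\mathrm{C}^{2}$ in $x$ (since $f\in\mathrm{C}_{b}^{2}(\E)$ and $\Pi_{v}$ preserves smoothness in $x$ by dominated convergence). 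Interior Schauder estimates, accessed via the abstract Kolmogorov-operator machinery of \cite{Lorenzi06}, then give $u_{f}\in\mathrm{C}^{3+\alpha}_{\mathrm{loc}}(\X)\subset\mathrm{C}^{3}(\X)$, together with the $\ELL(\pi)$-integrability of $\nabla_{x}u_{f}$ and $\nabla_{x}^{2}u_{f}$ that cannot be taken for granted in the heavy-tailed setting and is needed to justify the integration-by-parts computations below.

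For \eqref{eq:derPE}, the plan is to test the Poisson equation against $u_{f}$ in $\ELL(\pi)$: using $\langle u_{f},\nabla_{x}^{*}\nabla_{x}u_{f}\rangle_{2}=\|\nabla_{x}u_{f}\|_{2}^{2}$ and Cauchy--Schwarz gives
\[
m_{2}\|u_{f}\|_{2}^{2}+m_{2}\|\nabla_{x}u_{f}\|_{2}^{2}=\langle\Pi_{v}f,u_{f}\rangle_{2}\le\|\Pi_{v}f\|_{2}\|u_{f}\|_{2},
\]
which separately delivers both $\|u_{f}\|_{2}\le m_{2}^{-1}\|\Pi_{v}f\|_{2}$ and $\|\nabla_{x}u_{f}\|_{2}\le m_{2}^{-1}\|\Pi_{v}f\|_{2}$. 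For \eqref{eq:secondderPE}, I would test against $\nabla_{x}^{*}\nabla_{x}u_{f}$ (or equivalently expand $\|(\mathrm{Id}+\nabla_{x}^{*}\nabla_{x})u_{f}\|_{2}^{2}$ directly) to obtain $\|\nabla_{x}^{*}\nabla_{x}u_{f}\|_{2}\le m_{2}^{-1}\|\Pi_{v}f\|_{2}$, and then invoke the Bakry--Emery/Bochner identity
\[
\|\nabla_{x}^{*}\nabla_{x}u\|_{2}^{2}=\|\nabla_{x}^{2}u\|_{2}^{2}+\langle\nabla_{x}u,(\nabla_{x}^{2}U)\nabla_{x}u\rangle_{\pi},
\]
which, combined with the lower Hessian bound $\nabla_{x}^{2}U\succeq-c_{U}\mathrm{I}_{d}$ of Assumption \ref{assu:U}-\ref{enu:HessianUlowerbounded}, yields $\|\nabla_{x}^{2}u_{f}\|_{2}^{2}\le\|\nabla_{x}^{*}\nabla_{x}u_{f}\|_{2}^{2}+c_{U}\|\nabla_{x}u_{f}\|_{2}^{2}$. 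Substituting \eqref{eq:derPE} gives \eqref{eq:secondderPE} with the stated $\kappa_{1}$.

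The third bound \eqref{eq:innerprodderPE} is the most delicate, and it is here that the dichotomy in Assumption \ref{assu:U}-\ref{enu:nablaU-summable} becomes active. The bounded-gradient case is immediate from \eqref{eq:derPE} with $\kappa_{2}=\sup|\nabla_{x}U|$. In the heavy-tailed case, my plan is to start from $\int(\nabla_{x}U\cdot\nabla_{x}u_{f})^{2}\,\dif\pi\le\int|\nabla_{x}U|^{2}|\nabla_{x}u_{f}|^{2}\,\dif\pi$ and to integrate by parts against $\pi$ so as to exchange one outer $|\nabla_{x}U|^{2}$ factor for a $\Delta_{x}U$ term plus a derivative falling on $|\nabla_{x}u_{f}|^{2}$, the latter producing a cross term controlled by $\|\nabla_{x}^{2}u_{f}\|_{2}$. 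The growth condition $\Delta_{x}U\le C_{U}d^{1+\omega}+|\nabla_{x}U|^{2}/2$ of Assumption \ref{assu:U}-\ref{enu:cond-laplacian-U} then absorbs half of the resulting $|\nabla_{x}U|^{2}|\nabla_{x}u_{f}|^{2}$-term back into the left-hand side, leaving a quadratic inequality in $\|(\nabla_{x}U)^{\top}\nabla_{x}u_{f}\|_{2}$ whose residual coefficients involve only $C_{U}d^{1+\omega}$, $\|\nabla_{x}u_{f}\|_{2}$ and $\|\nabla_{x}^{2}u_{f}\|_{2}$; inserting \eqref{eq:derPE}--\eqref{eq:secondderPE} then produces the stated $\kappa_{2}$. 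The main obstacle throughout is justifying each integration-by-parts step without a coercive $\mathrm{e}^{-U}$ weight to make boundary terms at infinity vanish a priori: this is precisely what the abstract results of \cite{Lorenzi06} supply, via a cut-off-and-pass-to-the-limit argument that simultaneously yields the $\ELL(\pi)$-integrability of $\nabla_{x}u_{f}$ and $\nabla_{x}^{2}u_{f}$ and identifies the $\ELL(\pi)$-weak solution with its classical $\mathrm{C}^{3}$ representative, making each IBP rigorous.
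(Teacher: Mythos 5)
Your proposal is correct and follows essentially the same route as the paper: regularity and $\ELL(\pi)$-integrability via \cite{Lorenzi06}, elementary energy estimates for \eqref{eq:derPE} (the paper simply quotes \cite[Theorem 3.3]{Lorenzi06} for these, but your direct computation with $u_f\in\D(\nabla_x^{*}\nabla_x)$ is valid), and for \eqref{eq:innerprodderPE} exactly the paper's argument: the bounded-gradient case from \eqref{eq:derPE}, and otherwise the integration by parts that trades $\lvert\nabla_x U\rvert^{2}$ for $\Delta_x U$ plus a cross term controlled by $\lVert\nabla_x^{2}u_f\rVert_2$, absorbed using \eqref{eq:boundonU} (this is \cite[Lemma 34]{Andrieu18} applied with $\varphi=\partial_i u_f$ in the paper). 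The only organizational difference is in \eqref{eq:secondderPE}: you combine $\lVert\nabla_x^{*}\nabla_x u_f\rVert_2\le m_2^{-1}\lVert\Pi_v f\rVert_2$ with the Bochner/Bakry--\'Emery identity and the lower Hessian bound, which formally even yields the slightly sharper constant $\sqrt{1+c_U}$ in place of $\kappa_1=\sqrt{2(2+c_U)}$, whereas the paper differentiates the Poisson equation and runs an explicit cut-off computation (Appendix \ref{appen:pf_lemma}). Note that the global Bochner identity for $u_f$ is precisely the step that is not available off the shelf in this heavy-tailed setting: \cite[Theorem 3.3]{Lorenzi06} as stated does not give the $\ELL(\pi)$ Hessian bound, and the paper must revisit its proof with cut-off functions $\vartheta_n$ to justify the integration by parts and the a priori finiteness of $\lVert\nabla_x^{2}u_f\rVert_2$; since you explicitly defer to such a cut-off-and-limit argument, this is a deferral of detail rather than a gap, and the same caveat applies to extending the $\kappa_2$ inequality from $\mathrm{C}_{\mathrm{c}}^{\infty}$ test functions to $\partial_i u_f$, which the paper handles via the core property of $\mathrm{C}_{\mathrm{c}}^{\infty}$ for $(\nabla_x,\D(\nabla_x))$.
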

\begin{proof}
Note by rescaling $f$ we may take $m_{2}=1.$ The fact that $u_{f}\in\ELL(\pi)$
follows from the fact that $(\mathrm{Id}+\nabla_{x}^{*}\nabla_{x})^{-1}$
is a positive self-adjoint bounded operator on $\ELL(\pi)$, as detailed
in \cite[Proposition 26]{Andrieu18}. We now make use of \cite[Theorem 3.3]{Lorenzi06}.
Since we are dealing with a simplified version of the Poisson equation
(\ref{eq:Poisson-eq}), the Hypotheses 2.1(i)-(iii) of \cite{Lorenzi06}
are trivially satisfied. Hypothesis 2.1(iv) of \cite{Lorenzi06} is
equivalent in our setting to Assumption \ref{assu:U}-\ref{enu:HessianUlowerbounded}.
Hence we have satisfied the hypotheses of \cite[Theorem 3.3]{Lorenzi06}.
The bounds (\ref{eq:derPE}) then follow immediately from \cite[Theorem 3.3]{Lorenzi06}.
The finiteness and upper bound of $\|\nabla_{x}^{2}u_{f}\|_{2}$ follows
from revisiting the proof of \cite[Theorem 3.3]{Lorenzi06}, see Appendix
\ref{appen:pf_lemma}. If there exists $\kappa_{2}$ such that $\lVert\nabla_{x}U\rVert_{\infty}\leq\kappa_{2}$
then (\ref{eq:innerprodderPE}) follows from (\ref{eq:derPE}). Now
consider the case where (\ref{eq:boundonU}) holds. 

In the following, for functions $\R^{d}\to\R^{d}$, we will use the
bare notation $\|\cdot\|$ to denote the norm $\|\cdot\|_{\mathrm{L}_{\pi}^{2}(\R^{d};\R^{d})}$.
Following the proof of \cite[Lemma 34]{Andrieu18} we have for any
$\phi\in\mathrm{C}_{\mathrm{c}}^{\infty}(\X)$ and $\varepsilon>0$
that 
\[
\lVert\phi\nabla_{x}U\lVert^{2}-\langle\phi^{2},\Delta_{x}U\rangle_{2}\leq\varepsilon^{-1}\lVert\nabla_{x}\phi\rVert^{2}+\varepsilon\lVert\phi\nabla_{x}U\rVert^{2}.
\]
Now using (\ref{eq:boundonU}) we obtain
\[
\frac{1}{2}\lVert\phi\nabla_{x}U\lVert^{2}-C_{U}d^{1+\omega}\lVert\phi\rVert_{2}^{2}\leq\varepsilon^{-1}\lVert\nabla_{x}\phi\rVert^{2}+\varepsilon\lVert\phi\nabla_{x}U\rVert^{2}.
\]
Rearranging and setting $\varepsilon=1/4$ gives
\begin{equation}
\frac{1}{4}\lVert\phi\nabla_{x}U\lVert^{2}\leq4\lVert\nabla_{x}\phi\rVert^{2}+C_{U}d^{1+\omega}\lVert\phi\rVert^{2}.\label{eq:boundforphinablaU}
\end{equation}
As $\mathrm{C}_{\mathrm{c}}^{\infty}$ is a core for $(\nabla_{x},\D(\nabla_{x}))$
we have the above inequality for any $\phi\in\D(\nabla_{x})$. In
particular we shall set $\phi(x)=\partial_{i}u_{f}$ which gives
\[
\frac{1}{4}\lVert\partial_{i}u_{f}\nabla_{x}U\lVert^{2}\leq4\lVert\nabla_{x}\partial_{i}u_{f}\rVert^{2}+C_{U}d^{1+\omega}\lVert\partial_{i}u_{f}\rVert^{2}.
\]
Now summing over $i\in\{1,\ldots,d\}$ we obtain 
\begin{align*}
\lVert\,\rvert\nabla_{x}u_{f} & \lvert\,\nabla_{x}U\rVert^{2}=\sum_{i=1}^{d}\lVert\partial_{i}u_{f}\nabla_{x}U\lVert^{2}\\
\leq & 4(4\lVert\nabla_{x}^{2}u_{f}\rVert_{\mathrm{L}_{\pi}^{2}(\R^{d};\R^{d\times d})}^{2}+C_{U}d^{1+\omega}\lVert\nabla_{x}u_{f}\rVert^{2})\\
\leq & 4(4\kappa_{1}+C_{U}d^{1+\omega})\lVert\Pi_{v}f\rVert_{2}^{2}.
\end{align*}
Recall here that $\lvert\nabla_{x}u_{f}(x)\rvert$ denotes the Euclidean
norm of $\nabla_{x}u_{f}(x)$. Finally by using Cauchy--Schwarz we
obtain the desired result with $\kappa_{2}=\sqrt{4(4\kappa_{1}+C_{U}d^{1+\omega})}$.
\end{proof}
A proof of the following result is given in \cite[Lemma 9(c)]{Andrieu18};
however it relies on a density argument involving $\mathsf{\mathrm{C}}_{{\rm poly}}^{3}(\X)$,
therefore requiring the existence of moments under $\pi$ and hence
stronger assumption on $U$. The below establishes that this assumption
is not required.
\begin{lem}
Under Assumption \ref{assu:U}, then for any $f\in\ELL(\pi)$
\begin{equation}
\{m_{2}\mathrm{Id}+(\T\Pi_{v})^{*}\overline{\T\Pi_{v}}\}^{-1}f=m_{2}\{\mathrm{Id}+\nabla_{x}^{*}\nabla_{x}\Pi_{v}\}^{-1}f.\label{eq:AisTPibutalsoNabla}
\end{equation}
\end{lem}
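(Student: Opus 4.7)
The approach is to promote the elementary identity $Gf = m_{2}\nabla_{x}^{*}\nabla_{x}\Pi_{v}f$, which holds on the core $\C = \mathrm{C}_{b}^{2}(\E)$ by \cite[Lemma 9(b)]{Andrieu18}, to an equality of resolvents on all of $\ELL(\pi)$. Both $m_{2}\Id + G$ and $m_{2}(\Id + \nabla_{x}^{*}\nabla_{x}\Pi_{v})$ are self-adjoint strictly positive operators on $\ELL(\mu)$: the first by \cite[Theorem 5.1.9]{Pedersen}; the second because $\nabla_{x}$ is densely defined and closed on $\ELL(\pi)$ so that $\Id + \nabla_{x}^{*}\nabla_{x}$ is self-adjoint, cf.\ \cite[Proposition 26]{Andrieu18}. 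Thus both admit bounded inverses on all of $\ELL(\pi)$, and it suffices to show that the right-hand side of (\ref{eq:AisTPibutalsoNabla}) lies in $\D(G)$ and is mapped to $f$ by $m_{2}\Id + G$.

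Concretely, fix $f \in \ELL(\pi)$ and set $u := m_{2}^{-1}(\Id + \nabla_{x}^{*}\nabla_{x})^{-1}\Pi_{v}f$, regarded as a $v$-independent element of $\ELL(\mu)$ so that $\Pi_{v}u = u$. By Lemma \ref{lem:Poissonequestimates}, $u \in \mathrm{C}^{3}(\X)$ with quantitative $\ELL(\pi)$-control on $u$, $\nabla_{x}u$, $\nabla_{x}^{2}u$ and $(\nabla_{x}U)^{\top}\nabla_{x}u$. To show $u \in \D(G)$ with $Gu = m_{2}\nabla_{x}^{*}\nabla_{x}\Pi_{v}u$, I would approximate $u$ by $u_{n}(x,v) := u(x)\chi(x/n)$, where $\chi \in \mathrm{C}_{c}^{\infty}(\R^{d})$ is a standard cutoff equal to $1$ on $B_{1}$ and supported in $B_{2}$. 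Then $u_{n} \in \mathrm{C}_{c}^{2}(\E) \subset \C$, so the identity $Gu_{n} = m_{2}\nabla_{x}^{*}\nabla_{x}u_{n}$ holds and may be expanded via the chain rule.

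Writing $\nabla_{x}^{*}\nabla_{x} = -\Delta_{x} + (\nabla_{x}U)^{\top}\nabla_{x}$ and differentiating the product $u(x)\chi(x/n)$ decomposes $\nabla_{x}^{*}\nabla_{x}u_{n}$ into a leading piece $\chi(\cdot/n)\nabla_{x}^{*}\nabla_{x}u$ plus correction terms of order $1/n$ and $1/n^{2}$ involving $\nabla\chi(\cdot/n)$ and $\Delta\chi(\cdot/n)$. The main obstacle is that, since $\nabla_{x}U$ may be unbounded under Assumption \ref{assu:U}\ref{enu:cond-laplacian-U}, showing $\chi(\cdot/n)(\nabla_{x}U)^{\top}\nabla_{x}u \to (\nabla_{x}U)^{\top}\nabla_{x}u$ in $\ELL(\pi)$ by dominated convergence requires the integrable majorant $\|(\nabla_{x}U)^{\top}\nabla_{x}u\|_{2} \le m_{2}^{-1}\kappa_{2}\|\Pi_{v}f\|_{2}$ furnished by Lemma \ref{lem:Poissonequestimates}, and this is precisely the non-trivial ingredient coming from the Lorenzi-type estimates. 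The remaining correction terms are dispatched analogously using $\|\nabla_{x}u\|_{2}, \|u\|_{2} < \infty$ together with the boundedness of $\chi, \nabla\chi, \Delta\chi$.

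Once $u_{n} \to u$ and $Gu_{n} \to m_{2}\nabla_{x}^{*}\nabla_{x}u$ in $\ELL(\mu)$ are established, the closedness of the self-adjoint operator $G$ gives $u \in \D(G)$ with $Gu = m_{2}\nabla_{x}^{*}\nabla_{x}\Pi_{v}u$. Hence $(m_{2}\Id + G)u = m_{2}(\Id + \nabla_{x}^{*}\nabla_{x}\Pi_{v})u = \Pi_{v}f = f$, since $f$ is $v$-independent, and applying $(m_{2}\Id + G)^{-1}$ to both sides yields the identification (\ref{eq:AisTPibutalsoNabla}).
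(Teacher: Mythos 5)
There is a genuine gap at the very first step: you fix an arbitrary $f\in\ELL(\pi)$ and invoke Lemma \ref{lem:Poissonequestimates} to conclude that $u=\{m_{2}(\mathrm{Id}+\nabla_{x}^{*}\nabla_{x})\}^{-1}\Pi_{v}f$ lies in $\mathrm{C}^{3}(\X)$ with the quantitative bounds (\ref{eq:derPE})--(\ref{eq:innerprodderPE}). That lemma is stated (and proved, via the Schauder/Lorenzi machinery of \cite{Lorenzi06}) only for $f\in\mathrm{C}_{b}^{2}(\E)$; for a general $f\in\ELL(\pi)$ the resolvent only places $u$ in $\D(\nabla_{x}^{*}\nabla_{x})$, not in $\mathrm{C}^{3}(\X)$, and the constants $\kappa_{1},\kappa_{2}$ are not available. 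Consequently your cut-off computation, which differentiates $u$ classically and uses $\|\nabla_{x}^{2}u\|_{2}<\infty$ and $\|(\nabla_{x}U)^{\top}\nabla_{x}u\|_{2}<\infty$ as dominating bounds, is not justified at the level of generality at which you run it. A secondary (smaller) omission: the correction term $n^{-1}u\,(\nabla_{x}U)^{\top}\nabla\chi(\cdot/n)$ involves the possibly unbounded $\nabla_{x}U$, so boundedness of $\chi,\nabla\chi,\Delta\chi$ together with $\|u\|_{2}<\infty$ does not suffice; you need $\|u\,\nabla_{x}U\|_{2}<\infty$, which follows from (\ref{eq:boundforphinablaU}) applied with $\varphi=u$, but this must be said.

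The repair is short and brings you essentially onto the paper's route: run your argument only for $f$ in the dense set $\mathrm{C}_{b}^{2}(\E)$ (equivalently $\Pi_{v}f\in\mathrm{C}_{b}^{2}(\X)$), where Lemma \ref{lem:Poissonequestimates} does apply and your cut-off/closedness argument shows $u_{f}\in\D(G)$ with $(m_{2}\mathrm{Id}+G)u_{f}=\Pi_{v}f=f$; then, since you have already observed that both $\{m_{2}\mathrm{Id}+(\T\Pi_{v})^{*}\overline{\T\Pi_{v}}\}^{-1}$ and $\{m_{2}(\mathrm{Id}+\nabla_{x}^{*}\nabla_{x}\Pi_{v})\}^{-1}$ are bounded on $\ELL(\pi)$, the identity (\ref{eq:AisTPibutalsoNabla}) extends from this dense class to all of $\ELL(\pi)$ by continuity. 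This is precisely how the paper argues: it shows the two operators coincide on $\mathrm{H}^{2}(\X)$ (using \cite[Lemma 3.1]{Lorenzi06} to approximate $\mathrm{H}^{2}$ functions by $\mathrm{C}_{b}^{2}$ functions in the relevant graph norms, rather than your explicit cut-offs), notes $\mathrm{C}_{b}^{2}(\X)\subset m_{2}(\mathrm{Id}+\nabla_{x}^{*}\nabla_{x})(\mathrm{H}^{2}(\X))$ is dense in $\ELL(\pi)$, and concludes by boundedness of the two inverses. You have all the ingredients; the flaw is only that you applied the Poisson-equation estimates outside their hypotheses instead of proving the resolvent identity on a dense set and then extending.
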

\begin{proof}
The proof is along the same lines as that of \cite[Lemma 9(c)]{Andrieu18},
replacing $\mathsf{\mathrm{C}}_{{\rm poly}}^{3}(\X)$ with $\mathsf{\mathrm{H}}^{2}(\X)$,
thanks to the results of \cite{Lorenzi06}. More precisely from \cite[Lemma 3.1]{Lorenzi06}
we have that for any $g\in\mathrm{H}^{2}(\X)$ we can find $\{g_{n}\in\mathsf{\mathrm{C}}_{{\rm b}}^{2}(\X)\}_{n\in\mathbb{N}}$
such that $\nabla_{x}g_{n}\rightarrow\nabla_{x}g$ and $\nabla_{x}^{2}g_{n}\rightarrow\nabla_{x}^{2}g$
in $\ELL(\pi)$, therefore implying $\{m_{2}\mathrm{Id}+(\T\Pi_{v})^{*}\overline{\T\Pi_{v}}\}g_{n}\rightarrow m_{2}\{\mathrm{Id}+\nabla_{x}^{*}\nabla_{x}\}g$
and $\{m_{2}\mathrm{Id}+(\T\Pi_{v})^{*}\overline{\T\Pi_{v}}\}g_{n}\rightarrow m_{2}\{\mathrm{Id}+\nabla_{x}^{*}\nabla_{x}\}^{-1}g$
since $\nabla_{x}^{*}\nabla_{x}h=-\Delta_{x}h+\nabla_{x}U^{\top}\nabla_{x}h$
for $h\in\mathrm{H}^{2}(\mathsf{X})$ (see Lemma \ref{lem:nabla-star})
and the two operators are closed. Therefore the two operators coincide
on $\mathrm{H}^{2}(\X)\subset\D\big(m_{2}\mathrm{Id}+(\T\Pi_{v})^{*}\overline{\T\Pi_{v}}\big)\cap\D\big(\mathrm{Id}+\nabla_{x}^{*}\nabla_{x}\big)$.
Again from \cite{Lorenzi06} we have $\mathsf{\mathrm{C}}_{{\rm b}}^{2}(\X)\subset m_{2}(\mathrm{Id}+\nabla_{x}^{*}\nabla_{x})\big(\mathrm{H}^{2}(\X)\big)$
(see Lemma \ref{lem:Poissonequestimates}), which is dense in $\ELL(\pi)$
and we deduce (\ref{eq:AisTPibutalsoNabla}) by boundedness of the
two inverses.
\end{proof}
In order to show that Condition \ref{cond:R0-rem} follows we use
\cite[Lemma 12 \& 13]{Andrieu18} which states the following.
\begin{lem}[{\cite[Lemma 12 \& 13]{Andrieu18}}]
\label{lem:Remaindertoellipticity}Assume that $\L$ is given by
(\ref{eq:LPDMPdef}). Assume in addition that Assumptions \ref{assu:U}
- \ref{assu:The-refreshment-rate} hold.
\begin{enumerate}
\item For any $f\in\ctb(\mathsf{E})$,
\[
\lvert\langle\A\S(\Id-\Pi_{v})f,f\rangle_{2}\rvert\leq\lVert(\Id-\Pi_{v})f\rVert_{2}\lVert(\Id-\Pi_{v})\S\A^{*}f\rVert_{2}.
\]
\item For any $f\in\ctb(\mathsf{E}),$ 
\begin{align*}
\lVert(\Id-\Pi_{v})\S\A^{*}f\rVert_{2} & \leq m_{2}(\lVert\lambda_{\refr}\nabla_{x}u_{f}\rVert_{2}+c_{\varphi}K\|\nabla_{x}u_{f}\|_{2})\\
 & +C_{\varphi}\sqrt{2m_{2,2}+3(m_{4}-m_{2,2})_{+}}\sum_{k=1}^{K}\lVert F_{k}^{\top}\nabla_{x}u_{f}\rVert_{2}.
\end{align*}
\item For any $f\in\ctb(\mathsf{E})$,
\[
\lvert\langle\A\T(\Id-\Pi_{v})f,f\rangle_{2}\rvert\leq\lVert(\Id-\Pi_{v})f\rVert_{2}\lVert(\Id-\Pi_{v})\T\A^{*}f\rVert_{2}.
\]
\item For any $f\in\ctb(\mathsf{E})$,
\[
\lVert(\Id-\Pi_{v})\T\A^{*}f\rVert_{2}\leq\sqrt{3(m_{4}-m_{2,2})+2m_{2,2}}(\kappa_{1}+\kappa_{2})\lVert\Pi_{v}f\rVert_{2},
\]
with $\kappa_{1}$ and $\kappa_{2}$ positive constants as defined
in Lemma \ref{lem:Poissonequestimates}.
\end{enumerate}
\end{lem}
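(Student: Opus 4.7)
The plan is to reproduce the proofs of Lemmas 12 and 13 of \cite{Andrieu18}, since the operators $\S$, $\T$ and $\A$ here are literally those studied there: our PDMP generator takes the same form, with the same decomposition into symmetric and antisymmetric parts. The only new technical input needed is the quantitative control of the Poisson solution $u_f$ provided by our Lemma \ref{lem:Poissonequestimates}, which now covers the heavy-tailed regime. I would organise the argument into an \emph{abstract pairing} step for parts (a) and (c), and an \emph{explicit computation} step for parts (b) and (d).

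For parts (a) and (c), the observations are that $\S$ is self-adjoint on $\C$, $\T$ is antisymmetric on $\C$, and $\Id - \Pi_v$ is a self-adjoint orthogonal projection. Moving the adjoint across the inner product and then inserting $\Id-\Pi_v$ twice on the right gives, for (a),
\[
\langle \bar{\A}\S(\Id-\Pi_v)f, f\rangle_2 = \langle (\Id-\Pi_v)f, (\Id-\Pi_v)\S\bar{\A}^*f\rangle_2,
\]
whence Cauchy--Schwarz yields the claimed inequality. Part (c) follows by the same template, with antisymmetry of $\T$ contributing only a sign that is absorbed into the absolute value. These manipulations are justified on $\C = \mathrm{C}_b^2(\E)$ because $\bar{\A}$ is bounded and $\S, \T$ act pointwise there; any closure issues can be handled by density.

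For parts (b) and (d), the first step is to identify $\bar{\A}^*f$ explicitly. Unpacking the definition gives $\bar{\A}^* = -\overline{\T\Pi_v}(m_2\Id + G)^{-1}$, and the identity (\ref{eq:AisTPibutalsoNabla}), together with Lemma \ref{lem:Poissonequestimates}, shows that on $\mathrm{C}_b^2(\E)$ this reduces to $\bar{\A}^*f = -v^\top\nabla_x u_f$ up to the global normalising constant, since $u_f$ is $v$-independent and every bounce operator therefore acts trivially on it. Substituting this into $\S$ gives for (b) a refreshment piece $m_2^{1/2}\lambda_\refr v^\top\nabla_x u_f$ (using $\Pi_v(v)=0$) and bounce pieces $2\lvert v^\top F_k\rvert(v^\top \mathrm{n}_k)(\mathrm{n}_k^\top \nabla_x u_f)$, each of which is odd in $v$ and so already lies in $\mathrm{Ran}(\Id-\Pi_v)$; evaluating the resulting $\ELL(\mu)$-norms via the moment conventions $m_2, m_{2,2}, m_4$ of Assumption \ref{assu:V_nu}-(\ref{enu:4(d)-has-finite}) produces the stated $m_2$ prefactor on the refreshment term and the constant $\sqrt{2m_{2,2}+3(m_4-m_{2,2})_+}$ on each bounce term. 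Substituting into $\T$ gives for (d) a drift contribution $v^\top(\nabla_x^2 u_f)v$ and bounce contributions $-2(v^\top F_k)^2(F_k^\top\nabla_x u_f)/\lvert F_k\rvert^2$; projecting off their $\Pi_v$-averages $m_2\Delta_x u_f$ and $m_2 F_k^\top\nabla_x u_f$, computing the remainder norms via the same fourth-moment calculus, and then applying the bounds (\ref{eq:secondderPE})--(\ref{eq:innerprodderPE}) of Lemma \ref{lem:Poissonequestimates} delivers the constant $\sqrt{3(m_4-m_{2,2})+m_{2,2}}(\kappa_1+\kappa_2)\lVert\Pi_v f\rVert_2$.

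The main obstacle is bookkeeping of the fourth $\nu$-moments when expanding $(v^\top F_k)^4$: the clause $\mathrm{card}(\{i,j,k,l\})>2$ in Assumption \ref{assu:V_nu}-(\ref{enu:4(d)-has-finite}) is precisely what keeps this expansion tractable, and in part (d) the Hessian term and the bounce terms couple, so one must combine them carefully before bounding. Structurally, the heavy-tailed regime enters only through the estimate (\ref{eq:innerprodderPE}), which replaces the polynomial-growth control of $\lVert F_k^\top\nabla_x u_f\rVert_2$ used in \cite{Andrieu18} by one that uses only Assumption \ref{assu:U}-(\ref{enu:cond-laplacian-U}); this is the single ingredient that makes the whole argument compatible with a merely weak Poincar\'e inequality.
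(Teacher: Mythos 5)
Your proposal takes essentially the same route the paper relies on: the paper gives no proof of this lemma but cites Lemmas 12--13 of \cite{Andrieu18}, whose argument is exactly your adjoint-plus-Cauchy--Schwarz step for (a) and (c) and the explicit identification $\bar{\mathcal{A}}^{*}f=-v^{\top}\nabla_{x}u_{f}$ via the Poisson equation for (b) and (d), with the only new ingredient being that the estimates of Lemma \ref{lem:Poissonequestimates} supply $\kappa_{1},\kappa_{2}$ in place of the polynomial-growth bounds used in \cite{Andrieu18}. Apart from inconsequential sign/factor slips in the bounce and Hessian terms and the fact that the fourth-moment bookkeeping yielding the exact constants $\sqrt{2m_{2,2}+3(m_{4}-m_{2,2})_{+}}$ and $\sqrt{3(m_{4}-m_{2,2})+m_{2,2}}$ is asserted rather than carried out, this is the intended argument.
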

\begin{cor}
\label{cor:R0-PDMP}Assume that $\L$ is given by (\ref{eq:LPDMPdef}).
Assume in addition that Assumptions \ref{assu:U} - \ref{assu:The-refreshment-rate}
hold. Then Condition \ref{cond:R0-rem} holds with
\begin{align*}
R_{0} & =\sqrt{3(m_{4}-m_{2,2})_{+}+2m_{2,2}}(\kappa_{1}+\kappa_{2})\\
 & +m_{2}^{-1}\left(m_{2}(\underline{\lambda}(1+c_{\lambda}\kappa_{2})+c_{\varphi}K)+C_{\varphi}(1+\kappa_{2})\sqrt{2m_{2,2}+3(m_{4}-m_{2,2})_{+}}\sum_{k=1}^{K}a_{k}\right).
\end{align*}
\end{cor}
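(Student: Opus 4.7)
The plan is to reduce the estimate in Condition \ref{cond:R0-rem} to the four parts of Lemma \ref{lem:Remaindertoellipticity}, and then to convert the resulting Poisson-type norms into multiples of $\|\Pi_v f\|_2$ using Lemma \ref{lem:Poissonequestimates} together with Assumptions \ref{assu:F_k}-\ref{enu:2(c)for-all-,} and \ref{assu:The-refreshment-rate}. It suffices to verify the inequality for $f\in\C=\mathrm{C}_b^2(\E)$ since $\C$ is precisely the space where the Condition must hold, and where parts (a)--(d) of Lemma \ref{lem:Remaindertoellipticity} are formulated.

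First I would exploit Condition \ref{cond:proj}-\ref{enu:3(a)}: since $\mathrm{Ran}(\Pi_v)\subset\mathrm{Ker}(\S^*)$ and $\S$ is self-adjoint on $\C$, we have $\S\Pi_v f=0$, hence $\S f=\S(\Id-\Pi_v)f$. The LHS of Condition \ref{cond:R0-rem} is then bounded by
\[
|\langle\bar{\A}\T(\Id-\Pi_v)f,f\rangle_{2}|+|\langle\bar{\A}\S(\Id-\Pi_v)f,f\rangle_{2}|.
\]
Parts (a) and (c) of Lemma \ref{lem:Remaindertoellipticity} immediately bound these by $\|(\Id-\Pi_v)f\|_2\bigl(\|(\Id-\Pi_v)\T\A^{*}f\|_2+\|(\Id-\Pi_v)\S\A^{*}f\|_2\bigr)$, so it remains to show each of the two norms on the right is at most (a constant)$\times\|\Pi_v f\|_2$.

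For the $\T$ term, Lemma \ref{lem:Remaindertoellipticity}(d) gives the bound $\sqrt{3(m_4-m_{2,2})_+ + m_{2,2}}(\kappa_1+\kappa_2)\|\Pi_v f\|_2$ directly. For the $\S$ term, Lemma \ref{lem:Remaindertoellipticity}(b) reduces the problem to bounding $\|\lambda_{\refr}\nabla_x u_f\|_2$ and $\|F_k^{\top}\nabla_x u_f\|_2$, where $u_f$ is the solution of the Poisson equation (\ref{eq:Poisson-eq}). Using Assumption \ref{assu:The-refreshment-rate} and the triangle inequality, $\|\lambda_{\refr}\nabla_x u_f\|_2\le\underline{\lambda}\bigl(\|\nabla_x u_f\|_2+c_\lambda\|\,|\nabla_x U|\,|\nabla_x u_f|\,\|_2\bigr)$, which by (\ref{eq:derPE}) and the stronger form of the third bound established inside the proof of Lemma \ref{lem:Poissonequestimates} (i.e.\ the Euclidean-product norm, not merely the inner product) is at most $\underline{\lambda}m_2^{-1}(1+c_\lambda\kappa_2)\|\Pi_v f\|_2$. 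Similarly, Assumption \ref{assu:F_k}-\ref{enu:2(c)for-all-,} yields $\|F_k^{\top}\nabla_x u_f\|_2\le a_k m_2^{-1}(1+\kappa_2)\|\Pi_v f\|_2$. Substituting these into Lemma \ref{lem:Remaindertoellipticity}(b) and summing over $k$ produces exactly the second line of the claimed expression for $R_0$.

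The whole argument is really just careful bookkeeping; the genuinely difficult step has already been absorbed into Lemma \ref{lem:Poissonequestimates}, where the Lorenzi--Lunardi machinery of \cite{Lorenzi06} produced a potential-independent bound on $\|(\nabla_x U)\cdot\nabla_x u_f\|_2$ without appealing to a Poincar\'e inequality. If there is an obstacle, it is making sure we invoke (\ref{eq:innerprodderPE}) in the stronger Euclidean-product form rather than only as an inner-product bound, since what Lemma \ref{lem:Remaindertoellipticity}(b) requires (through $\|F_k^\top\nabla_x u_f\|_2$ and $\|\lambda_{\refr}\nabla_x u_f\|_2$) is the pointwise product norm; this is why the proof of Lemma \ref{lem:Poissonequestimates} is stated to yield the stronger estimate via (\ref{eq:boundforphinablaU}).
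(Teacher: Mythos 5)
Your proposal is correct and follows essentially the same route as the paper: combine the four estimates of Lemma \ref{lem:Remaindertoellipticity} with the Poisson-equation bounds of Lemma \ref{lem:Poissonequestimates}, Assumption \ref{assu:F_k}-\ref{enu:2(c)for-all-,} and Assumption \ref{assu:The-refreshment-rate}, and collect constants. Your closing remark is also apt: the paper's proof likewise relies on the Euclidean-product estimate $\lVert\,\lvert\nabla_{x}U\rvert\,\nabla_{x}u_{f}\rVert_{2}\le m_{2}^{-1}\kappa_{2}\lVert\Pi_{v}f\rVert_{2}$ established inside the proof of Lemma \ref{lem:Poissonequestimates}, not merely the inner-product form (\ref{eq:innerprodderPE}).
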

\begin{proof}[Proof of Corollary \ref{cor:R0-PDMP} ]
 By Lemma \ref{lem:Remaindertoellipticity}, we have
\begin{align*}
\left|\langle\bar{\mathcal{A}}\mathcal{T}(\mathrm{Id}-\Pi_{v})f,f\rangle_{2}+\langle\bar{\mathcal{A}}\mathcal{S}f,f\rangle_{2}\right| & \leq\\
\sqrt{3(m_{4}-m_{2,2})_{+}+2m_{2,2}} & (\kappa_{1}+\kappa_{2})\lVert\Pi_{v}f\rVert_{2}\lVert(1-\Pi_{v})f\rVert_{2}\\
+\biggl( & C_{\varphi}\sqrt{2m_{2,2}+3(m_{4}-m_{2,2})_{+}}\sum_{k=1}^{K}\lVert F_{k}^{\top}\nabla_{x}u_{f}\rVert_{2}\\
+m_{2} & (\lVert\lambda_{\refr}\nabla_{x}u_{f}\rVert_{2}+c_{\varphi}K\|\nabla_{x}u_{f}\|_{2})\biggr)\lVert(1-\Pi_{v})f\rVert_{2}.
\end{align*}
Now by Assumption \ref{assu:F_k}-\ref{enu:2(c)for-all-,} and Lemma
\ref{lem:Poissonequestimates} we have
\[
\lVert F_{k}^{\top}\nabla_{x}u_{f}\rVert_{2}\leq a_{k}(\lVert\nabla_{x}u_{f}\rVert_{2}+\lVert\lvert\nabla_{x}U\rvert\nabla_{x}u_{f}\rVert_{2})\leq m_{2}^{-1}a_{k}(1+\kappa_{2})\lVert\Pi_{v}f\rVert_{2}.
\]
Similarly using Assumption \ref{assu:The-refreshment-rate} we have
\[
\lVert\lambda_{\refr}\nabla_{x}u_{f}\rVert_{2}\leq\underline{\lambda}\lVert(1+c_{\lambda}|\nabla_{x}U(x)|)\nabla_{x}u_{f}\rVert_{2}\leq m_{2}^{-1}\underline{\lambda}(1+c_{\lambda}\kappa_{2})\lVert\Pi_{v}f\rVert_{2}.
\]
Therefore Condition \ref{cond:R0-rem} follows with
\begin{align*}
R_{0} & =\sqrt{3(m_{4}-m_{2,2})+2m_{2,2}}(\kappa_{1}+\kappa_{2})\\
+ & m_{2}^{-1}\left(m_{2}(\underline{\lambda}(1+c_{\lambda}\kappa_{2})+c_{\varphi}K)+C_{\varphi}(1+\kappa_{2})\sqrt{2m_{2,2}+3(m_{4}-m_{2,2})_{+}}\sum_{k=1}^{K}a_{k}\right).
\end{align*}
\end{proof}

\subsection{Checking Condition \ref{cond:Psi}}

Recall that here $\Psi(\cdot)=\|\cdot\|_{\osc}^{2}$. We combine the
approaches of \cite{Grothaus19} and \cite{Andrieu18}. Note then
that the conditions in (\ref{eq:Psi_ineqs}) follow immediately from
the definition of $\Psi$ and the contractivity in $\mathrm{L}^{\infty}(\mu)$
of the Markov semigroups $P_{t}$ and $\mathrm{\mathrm{e}}^{-tG}$;
the latter corresponds to a diffusion semigroup. Now we check (\ref{eq:Psi_limsup}).
Fix some $f\in\D(\L)$ with $\Psi(f)=\| f\|_{\osc}^{2}<\infty$. Without
loss of generality, by translating $f$, we can assume that $\mu(f)=0$.
Hence we have $\gamma_{1}:=\text{ess}_{\mu}\inf f\le0$ and $\gamma_{2}:=\text{ess}_{\mu}\sup f\ge0$.
Since $\C=\ctb(\E)$ is a core, we can choose a sequence $\{g_{n}\}_{n=1}^{\infty}\subset\C$
such that $g_{n}\to f$ and $\L g_{n}\to\L f$ in $\ELL(\mu)$. We
take, as in \cite{Grothaus19}, for each $n\in\mathbb{N}$ a monotone
increasing function $h_{n}\in\mathrm{C}^{\infty}(\R)$ which satisfies
$0\le h'_{n}\le1$ and
\[
h_{n}(r)=\begin{cases}
r & r\in[\gamma_{1},\gamma_{2}],\\
\gamma_{1}-\frac{1}{2n} & r\le\gamma_{1}-\frac{1}{n},\\
\gamma_{2}+\frac{1}{2n} & r\ge\gamma_{2}+\frac{1}{n}.
\end{cases}
\]
Now similarly we set $f_{n}:=h_{n}(g_{n})\in\C$ and we have $f_{n}\to f$
in $\ELL(\mu)$. By construction $\| f_{n}\|_{\osc}\leq\gamma_{2}-\gamma_{1}+\frac{1}{n}$
so we have
\[
\limsup_{n\to\infty}\Psi(f_{n})=\limsup_{n\to\infty}\| f_{n}\|_{\osc}^{2}\leq(\gamma_{2}-\gamma_{1})^{2}=\| f\|_{\osc}^{2}.
\]
Additionally, 
\begin{align*}
\limsup_{n\to\infty}\langle-\L f_{n},f_{n}\rangle & =\limsup_{n\to\infty}\langle-\S f_{n},f_{n}\rangle\\
 & =\limsup_{n\to\infty}\sum_{k=0}^{K}\frac{1}{2}\langle\lambda_{k}^{\mathrm{e}}f_{n},f_{n}-\mathcal{B}_{k}f_{n}\rangle-\langle m_{2}^{1/2}\lambda_{\refr}\Rv f_{n},f_{n}\rangle\\
 & =\limsup_{n\to\infty}\frac{1}{4}\sum_{k=0}^{K}\int\lambda_{k}^{\mathrm{e}}(x,v)\left\{ f_{n}(x,v)-\mathcal{B}_{k}f_{n}(x,v)\right\} ^{2}\dif\mu(x,v)\\
 & \quad+m_{2}^{1/2}\int\lambda_{\refr}(x)\left\{ (\Id-\Pi_{v})f_{n}(x,v)\right\} ^{2}\dif\mu(x,v)\\
 & \le\limsup_{n\to\infty}\frac{1}{4}\sum_{k=0}^{K}\int\lambda_{k}^{\mathrm{e}}(x,v)\left\{ g_{n}(x,v)-\mathcal{B}_{k}g_{n}(x,v)\right\} ^{2}\dif\mu(x,v)\\
 & \quad+m_{2}^{1/2}\int\lambda_{\refr}(x)\left\{ (\Id-\Pi_{v})g_{n}(x)\right\} ^{2}\dif\mu(x,v)\\
 & =\limsup_{n\to\infty}\langle-\S g_{n},g_{n}\rangle\\
 & =\langle-\L f,f\rangle.
\end{align*}
The third equality follows from the fact that $\mathcal{B}_{k}$ is
symmetric on $\ELL(\mu)$, and $\mathcal{B}_{k}\lambda_{k}^{\mathrm{e}}(x,v)=\lambda_{k}^{\mathrm{e}}(x,v)$,
as in the proof of Proposition 7 of \cite{Andrieu19}. The inequality
follows from the fact that $h_{n}$ is 1-Lipschitz; we have that $\left(h_{n}(x)-h_{n}(y)\right)^{2}\le(x-y)^{2}$
for any $x,y\in\R$. Thus we have verified Condition \ref{cond:Psi}.

Since $\C$ is a core for $\L$, and $\L$ is densely defined, $\C$
is also dense in $\ELL(\mu)$. So given some $f\in\ELL(\mu)$ with
$\Psi(f)<\infty$, this argument also allows us to conclude that there
exists a sequence $(f_{n})\subset\D(\L)$ satisfying (\ref{eq:cor_f_n_seq})
as required in the assumptions of Corollary \ref{cor:abstract}.

\section{Examples\label{sec:Examples}}

In this section we apply Theorem \ref{lem:nabla-star} to our running
examples and obtain explicit bounds on convergence rate. We further
explore the tightness of such bounds on various examples, both theoretically
and empirically. Our main finding is that although our bounds are
useful (e.g. we establish the existence of a central limit theorem
for a large class of problems; see Example \ref{exa:CLT}) and widely
applicable, they are not sharp and rather pessimistic. In particular
we find that the bounds we obtain for PDMPs do not compare favourably
with the corresponding bounds for (reversible) Langevin diffusions
for a particular heavy-tailed target density. Informally, this should
not be surprising since for PDMPs, condition (\ref{eq:wp_1}) is precisely
that required of a Langevin diffusion (there is equivalence in the
reversible case) to achieve a particular subgeometric rate of convergence.
This condition drives all subsequent developments where the nonreversible
nature of the initial process does not seem to play a rôle anymore. 
\begin{example}[Example \ref{exa:targets} and \ref{exa:targetscont} continued]
\label{exa:5} In Example \ref{exa:targetscont} we showed the weak
Poincaré Inequality holds for the two examples considered and we now
show what rate we obtain by applying Theorem \ref{thm:PDMP}. These
obtained rates will immediately allow us to check condition (\ref{eq:CLT_condition})
to ensure central limit theorems, as discussed in Example \ref{exa:CLT}.
\end{example}
\begin{itemize}
\item[(a)]  For the case $U(x)=\frac{1}{2}(d+p)\log(1+\lvert x\rvert^{2})$
for some $p>0$, we have from Example \ref{exa:targetscont} (a) that
$\alpha_{1}$ is given by (\ref{eq:alphaforpowertarget}). Hence by
Theorem \ref{thm:PDMP}, we have the bound, for some $c>0$,
\[
\| P_{t}f\|_{2}^{2}\le ct^{-\frac{1}{2\tau}}\left(\| f\|_{2}^{2}+\lVert f\rVert_{\osc}^{2}\right),\quad\forall t\ge0,f\in\D(\L).
\]
\item[(b)]  Consider the case $U(x)=\sigma|x|{}^{\delta}$, for $x\in\R^{d}$
with $|x|\ge M$, for some $\delta,\sigma,M>0$, by Example \ref{exa:targetscont}
(b) we have that $\alpha_{1}$ is given by (\ref{eq:alphaforsubexpo}).
By Theorem \ref{thm:PDMP} we have that (\ref{eq:Pt_exp}) holds with
$\xi(t)=\inf\left\{ r>0:c_{2}t\geq\alpha_{1}(r)^{2}\log(r^{-1})\right\} .$
Setting $r=\exp(-kt^{\frac{\delta}{8-7\delta}})$ we have
\begin{align*}
\alpha_{1}(r)^{2}\log(r^{-1}) & =c^{2}\left[1+\log(1+r^{-1})\right]^{8(1-\delta)/\delta}\log(r^{-1})\\
 & =c^{2}k\left[1+\log(1+\exp(kt^{\frac{\delta}{8-7\delta}}))\right]^{8(1-\delta)/\delta}t^{\frac{\delta}{8-7\delta}}\\
 & \leq c^{2}k\left[1+\exp(-kt^{\frac{\delta}{8-7\delta}})+\log(\exp(kt^{\frac{\delta}{8-7\delta}}))\right]^{8(1-\delta)/\delta}t^{\frac{\delta}{8-7\delta}}
\end{align*}
to obtain the inequality in the last line we use that $\log(1+x)-\log(x)=\log(1+x^{-1})\leq x^{-1}$
for $x\geq1$. Now for $t\leq1$ the required bound is immediate so
we shall assume $t\geq1$ in which case $1+\exp(-kt^{\frac{\delta}{8-7\delta}})\leq2\leq2kt^{\frac{\delta}{8-7\delta}}$
so there exists $C(k,\delta)>0$ such that
\begin{align*}
\alpha_{1}(r)^{2}\log(r^{-1}) & \leq C(k,\delta)\left[t^{\frac{\delta}{8-7\delta}}\right]^{8(1-\delta)/\delta}t^{\frac{\delta}{8-7\delta}}\\
 & =C(k,\delta)t^{\frac{8(1-\delta)}{8-7\delta}}t^{\frac{\delta}{8-7\delta}}=C(k,\delta)t.
\end{align*}
Therefore we have, for some $c>0$,
\[
\| P_{t}f\|^{2}\le\exp(-kt^{\frac{\delta}{8-7\delta}})\left(\| f\|^{2}+\lVert f\rVert_{\osc}^{2}\right),\quad\forall t\ge0,f\in\D(\L).
\]
Let us compare the rates we obtain with those found for the reversible
and nonreversible Langevin diffusion. In \cite{Rockner01} the authors
consider the reversible (overdamped) Langevin diffusion,
\[
\dif X_{t}=-\nabla_{x}U(X_{t})\,\dif t+\sqrt{2}\,\dif B_{t},
\]
for heavy-tailed target distributions and prove convergence to equilibrium
by using the weak Poincaré Inequality and standard techniques, whereas
in \cite{Grothaus19} they use hypocoercivity to prove convergence
to equilibria for the nonreversible (underdamped) Langevin diffusion,
\begin{align*}
\dif X_{t} & =\nabla_{v}V_{2}(V_{t})\,\dif t,\\
\dif V_{t} & =-[\nabla_{x}V_{1}(X_{t})+\nabla_{v}V_{2}(V_{t})]\,\dif t+\sqrt{2}\,\dif B_{t}.
\end{align*}
In this case the diffusion has a unique invariant measure with density
which is proportional to $e^{-V_{1}(x)-V_{2}(v)}.$ In \cite[Example 1.4]{Rockner01}
and \cite[Example 1.3]{Grothaus19} they find the rates of convergence
to equilibria which we summarise in the table below. We can see in
Table 1 that for the examples we have considered we obtain the same
rate of convergence for PDMP to those obtained in \cite{Grothaus19}
for non-reversible Langevin dynamics which have Gaussian velocity.
This is a demonstration of the limits of hypocoercivity theory for
subgeometric target distributions, since the rate $\xi(t)$ given
by Theorem \ref{thm:abstracthypo} depends only on $\alpha_{1},\alpha_{2}$
and $c_{2}$ but $\alpha_{1}$ and $\alpha_{2}$ are given by the
target measure so are the same for each algorithm. 
\end{itemize}
\begin{center}
\begin{table}
\begin{centering}
\begin{tabular}{|c|c|c|}
\hline 
$U$ in scenario: & (a) & (b)\tabularnewline
\hline 
\hline 
rate of PDMP & $t^{-\frac{1}{2\tau}}$ & $\exp(-kt^{\frac{\delta}{8-7\delta}})$\tabularnewline
\hline 
rate of reversible Langevin & $t^{-\frac{1}{\tau}}$ & $\exp(-kt^{\frac{\delta}{4-3\delta}})$\tabularnewline
\hline 
nonreversible Langevin with $V_{1}=U$, $V_{2}=v^{2}/2$ & $t^{-\frac{1}{2\tau}}$ & $\exp(-kt^{\frac{\delta}{8-7\delta}})$\tabularnewline
\hline 
nonreversible Langevin with $V_{1}=v^{2}/2$, $V_{2}=U$ & $t^{-\frac{1}{\tau}}$ & $\exp(-kt^{\frac{\delta}{4-3\delta}})$\tabularnewline
\hline 
\end{tabular}\caption{Comparison of rates of convergence for scenarios in Example \ref{exa:targets}-\ref{exa:targetscont}.}
\par\end{centering}
\end{table}
\par\end{center}
\begin{example}
Let $d=1$ and $U(x)=\log(1+x^{2})$, for this choice we can give
the rate explicitly. By \cite[Theorem 3.2]{Rockner01} we have that

\[
\alpha_{1}(r)=\frac{4R_{r}^{2}}{\pi^{2}}\exp(\delta_{R_{r}}(U)).
\]
Here $R_{r}:=\inf\left\{ U(x)-U(y):x,y\in B_{R}\right\} $ and $\delta_{R}(U)=\sup\left\{ U(x)-U(y):x,y\in B_{R}\right\} .$
Now,
\[
\pi(B_{s}^{c})=2\int_{s}^{\infty}\frac{1}{1+x^{2}}dx=\pi-2\arctan(s).
\]
Rearranging, we find
\begin{align*}
R_{r} & =\tan\left(\frac{1}{2}\left(\pi-\frac{r}{r+1}\right)\right),\\
\delta_{R}(U) & =\log(1+R^{2}).
\end{align*}
Substituting these expressions into the defintion of $\alpha_{1}(r)$
we have
\[
\alpha_{1}(r)=\frac{4}{\pi^{2}}\frac{\tan\left(\frac{1}{2}\left(\pi-\frac{r}{r+1}\right)\right)}{\cos\left(\frac{1}{2}\left(\pi-\frac{r}{r+1}\right)\right)^{2}}\sim\frac{32}{\pi^{2}r^{3}}.
\]
Now we will run the Zig-Zag sampler with this potential, in which
case one finds that
\[
c_{2}'=\frac{\lambda_{\text{\ensuremath{\refr}}}}{4R_{0}^{2}(2+\sqrt{2})},\quad R_{0}=(1+\sqrt{1+c_{U}/2})+\lambda_{\refr}+2\sqrt{2},\quad c_{1}=2+\sqrt{2}.
\]
For this choice of $U$ we find $c_{U}=0.3$. Then
\[
\xi(t):=\inf\left\{ r>0:c_{2}'t\geq\frac{16}{\pi^{4}}\frac{\tan\left(\frac{1}{2}\left(\pi-\frac{r}{r+1}\right)\right)^{2}}{\cos\left(\frac{1}{2}\left(\pi-\frac{r}{r+1}\right)\right)^{4}}\log\left(\frac{1}{r}\right)\right\} .
\]
To leading order\footnote{This is found by first approximating to leading order 
\[
\frac{16}{\pi^{4}}\frac{\tan\left(\frac{1}{2}\left(\pi-\frac{r}{r+1}\right)\right)^{2}}{\cos\left(\frac{1}{2}\left(\pi-\frac{r}{r+1}\right)\right)^{4}}\sim\frac{1024}{\pi^{4}r^{6}}
\]

then we can solve

\[
c_{2}'t=\frac{1024}{\pi^{4}r^{6}}\log\left(\frac{1}{r}\right)=\frac{512}{3\pi^{4}r^{6}}\log\left(\frac{1}{r^{6}}\right).
\]
In which case $r^{-6}=W(\frac{3c_{2}'\pi^{4}}{512}t)$, therefore
$\xi(t)\sim W(\frac{3c_{2}'\pi^{4}}{512}t)^{-1/6}$.} $\xi(t)\sim W(\frac{3c_{2}'\pi^{4}}{512}t)^{-1/6}$, here $W(x)$
is the Lambert function defined as the inverse of $xe^{x}.$ We can
also compare this with the numerical performance for the Zig-Zag sampler
with canonical switching rates, below is a plot of $\mathbb{E}[f(X_{t})]^{2}$
started with inital condition $X_{0}=-5$ and with the velocity $V_{0}$
drawn uniformly from $\{1,-1\}$. To run the simulation we generated
$N=10^{7}$ Zig-Zag samplers and then calculated $N^{-1}\sum_{n=1}^{N}f(X_{t}^{n})$
to estimate $\mu(f)$. In the figure below we have used $f(x)=\mathbb{I}\{x\geq5\}$.
As we can see in the plot the process appears to converge much faster
than the theoretical bound of $\xi(t)$ which is included as a reference.
Note that in some of the plots the error is converging to a constant
value (around $10^{-4}$) this is due to error in running a finite
number of particles to estimate the expectation. We have also included
on the plot a simulation for the non-reversible Langevin SDE
\begin{align*}
{\rm d}X_{t} & =V_{t}{\rm d}t\\
{\rm d}V_{t} & =-\nabla_{x}U(X_{t}){\rm d}t-V_{t}{\rm d}t+\sqrt{2}{\rm d}B_{t},
\end{align*}
where $(B_{t})_{t\geq0}$ is a one-dimensional Brownian motion. To
simlute the non-reversible Langevin process we used the Euler-Maryuma
scheme with step size 0.01. We see for this example that the Zig-Zag
process is converging to zero faster than the non-reversible Langevin
SDE. 

\begin{figure}
\centering{}\includegraphics[width=1\textwidth]{"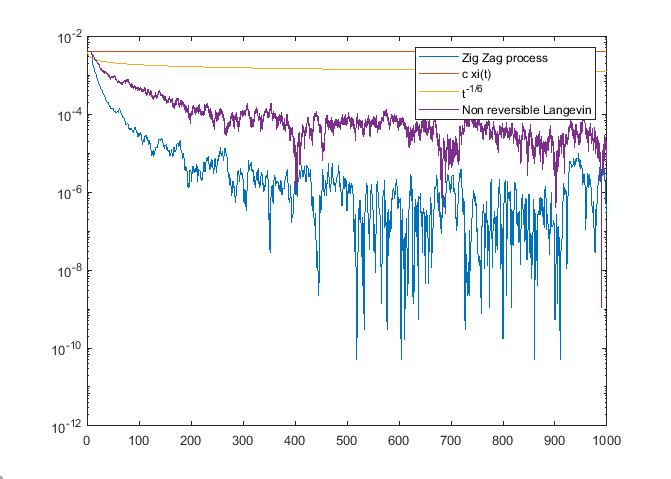"}\caption{A plot of the error $\lvert P_{t}f(-5,v)-\mu(f)\rvert^{2}$ for $f(x,v)=\mathbb{I}\{x\protect\geq5\}$,
in red is a plot of $c\xi(t)$ where $c$ is chosen so that curves
are initially equal. The line in blue is the error from the Zig-Zag
process whereas the purple is an Euler approximation to the non-reversible
Langevin process using a step size of 0.01. }
\end{figure}
\end{example}

\appendix

\section{Proof of Lemma \ref{lem:Poissonequestimates}}
\begin{defn}
A (smooth) cut-off function $\vartheta\colon\X\rightarrow[0,1]$ is
such that
\begin{enumerate}
\item $\vartheta\in\mathsf{\mathrm{C}}_{\mathrm{c}}^{\infty}(\X)$,
\item $\vartheta\equiv0$ on the complement of the unit ball $B_{1}^{\mathrm{\complement}}$
,
\item $\vartheta\equiv1$ on the ball $B_{1/2}$.
\end{enumerate}
\end{defn}
\begin{rem}
\label{rem:cut-off-decreasing-away-origin}Define the mollifier $\omega\in\mathrm{C}_{c}^{\infty}\big(\R^{d}\big)$,
\[
\omega(x)=\begin{cases}
\exp\big(-1/(1-|x|^{2})\big) & \text{if }|x|<1\\
0 & \text{otherwise,}
\end{cases}
\]
then $\vartheta(x)=2^{-d}\int\mathbb{I}\{|y-x|\leq1\}\omega(2y)\,\dif y$
is a cut-off function. We now show that if $|x_{2}|\geq|x_{1}|$ then
$\vartheta(x_{2})\leq\vartheta(x_{1})$. First notice that since $\omega$
is invariant by rotation, so is $\vartheta$, and we can focus on
the following scenario. Let $x\in B_{1/2}^{\complement}$ and $\lambda>1$
then $\{y\in B_{1/2}\colon|y-\lambda x|\leq1\}\subset\{y\in B_{1/2}\colon|y-x|\leq1\}$
and therefore $\vartheta(\lambda x)\leq\vartheta(x)$, from which
we conclude. The inclusion above is justified by the fact that for
$(x,y)\in B_{1/2}^{\complement}\times B_{1/2}$ $\lambda\mapsto|\lambda x-y|^{2}$
is non-decreasing since, with $n(x)=x/|2x|$, 
\begin{align*}
|\lambda x-y|^{2} & =|\lambda x-n(x)|^{2}+|n(x)-y|^{2}+2\langle\lambda x-n(x),n(x)-y\rangle\\
 & =(\lambda|x|-1/2)^{2}+|n(x)-y|^{2}+2(\lambda|x|-1/2)\langle n(x),n(x)-y\rangle,
\end{align*}
and $\langle n(x),n(x)-y\rangle=\langle n(x),n(x)\rangle-\langle n(x),y\rangle\geq0$
as $\langle n(x),y\rangle\leq|n(x)|\,|y|\leq|n(x)|^{2}$.
\end{rem}

\subsection{Expression for $\nabla^{*}$}
\begin{lem}
\label{lem:nabla-star}Let Assumption \ref{assu:U} hold. Then with
$\left(\nabla_{x},\D(\nabla_{x})\right)$ with $\nabla_{x}\colon\D(\nabla_{x})\rightarrow\ELL(\pi)^{d}$
where $\D(\nabla_{x})\subset\ELL(\pi)$, 
\begin{enumerate}
\item for $F\in\mathrm{H}^{1}(\X,\R^{d})$,
\[
\nabla_{x}^{*}F=-{\rm div}_{x}F+\nabla_{x}U^{\top}F,
\]
\item for $f\in\mathsf{\mathrm{H}}^{2}(\X)$,
\[
\nabla_{x}^{*}\nabla_{x}f=-\Delta_{x}f+\nabla_{x}U^{\top}\nabla_{x}f.
\]
\end{enumerate}
\end{lem}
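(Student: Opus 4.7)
The plan is to prove (a) first by a standard integration-by-parts calculation, and then derive (b) as an immediate corollary by applying (a) to the vector field $F=\nabla_{x}f$, which lies in $\mathrm{H}^{1}(\mathsf{X},\R^{d})$ whenever $f\in\mathrm{H}^{2}(\mathsf{X})$ (each component $\partial_{i}f$ is $\mathrm{C}^{1}$ with $L^{2}(\pi)$-integrable gradient by the very definition of $\mathrm{H}^{2}(\mathsf{X})$).

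For part (a), the formal starting point is: for $f,F$ sufficiently smooth and decaying,
\[
\langle\nabla_{x}f,F\rangle_{\pi}=\int\nabla f\cdot F\,\mathrm{e}^{-U}/Z\,\dif x=\int f(-\mathrm{div}_{x}F+\nabla_{x}U^{\top}F)\mathrm{e}^{-U}/Z\,\dif x.
\]
This is literally integration by parts against the density $\mathrm{e}^{-U}/Z$, valid whenever $f,F\in\mathrm{C}_{\mathrm{c}}^{\infty}$. The task is therefore to (i) show that $-\mathrm{div}_{x}F+\nabla_{x}U^{\top}F\in\ELL(\pi)$ for any $F\in\mathrm{H}^{1}(\X,\R^{d})$, so the RHS belongs to the domain of the adjoint, and (ii) extend the identity to all test functions $f\in\D(\nabla_{x})$ through a truncation argument. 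For (i), $\mathrm{div}_{x}F$ is a finite sum of components of $\nabla_{x}F$, so it is in $\ELL(\pi)$ by definition of $\mathrm{H}^{1}$. For $\nabla_{x}U^{\top}F$ the relevant bound is the already-established pointwise inequality (\ref{eq:boundforphinablaU}) from the proof of Lemma \ref{lem:Poissonequestimates}: applying it componentwise to each $F_{i}$ (using that $\mathrm{C}_{\mathrm{c}}^{\infty}$ is a core for $(\nabla_{x},\D(\nabla_{x}))$) and summing gives $\lVert|\nabla_{x}U|\,|F|\rVert_{2}\lesssim\lVert\nabla_{x}F\rVert_{2}+\lVert F\rVert_{2}<\infty$ under Assumption \ref{assu:U}(c)(ii); the case where $\nabla_{x}U$ is bounded is trivial.

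For (ii), I would introduce the cut-off sequence $\vartheta_{n}(x):=\vartheta(x/n)$ using the smooth cut-off $\vartheta$ of the preceding definition/remark, so that $\vartheta_{n}\uparrow1$ pointwise, $0\le\vartheta_{n}\le1$, and $|\nabla_{x}\vartheta_{n}|\le C/n$ uniformly. Set $F^{(n)}:=\vartheta_{n}F$, which is $\mathrm{C}^{1}$ with compact support, and verify the identity on $F^{(n)}$ against $f\in\mathrm{C}_{\mathrm{c}}^{\infty}(\X)$ by classical integration by parts. Writing $\mathrm{div}_{x}F^{(n)}=\vartheta_{n}\mathrm{div}_{x}F+\nabla_{x}\vartheta_{n}\cdot F$ and $\nabla_{x}U^{\top}F^{(n)}=\vartheta_{n}\nabla_{x}U^{\top}F$, dominated convergence in $\ELL(\pi)$ then lets me send $n\to\infty$: the terms $\vartheta_{n}\mathrm{div}_{x}F$ and $\vartheta_{n}\nabla_{x}U^{\top}F$ converge in $\ELL(\pi)$ to their limits by the bounds from step (i), while the unwanted boundary term $\nabla_{x}\vartheta_{n}\cdot F$ vanishes in $\ELL(\pi)$ since $|\nabla_{x}\vartheta_{n}|\le C/n$ and $F\in\ELL(\pi)^{d}$. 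This yields the identity $\langle\nabla_{x}f,F\rangle_{\pi}=\langle f,-\mathrm{div}_{x}F+\nabla_{x}U^{\top}F\rangle_{\pi}$ for $f\in\mathrm{C}_{\mathrm{c}}^{\infty}(\X)$, and then a density argument (since $\mathrm{C}_{\mathrm{c}}^{\infty}$ is a core for $\nabla_{x}$) gives it for all $f\in\D(\nabla_{x})$, placing $F$ in $\D(\nabla_{x}^{*})$ with the claimed expression.

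The main obstacle is purely checking $L^{2}(\pi)$-integrability of $\nabla_{x}U^{\top}F$ when $|\nabla_{x}U|$ is unbounded; this is exactly the technical point that made Assumption \ref{assu:U}(c) necessary, and it is resolved by invoking (\ref{eq:boundforphinablaU}) componentwise, which the paper has already established. Once (a) is in hand, part (b) follows by taking $F=\nabla_{x}f$ for $f\in\mathrm{H}^{2}(\X)$, so that $\mathrm{div}_{x}F=\Delta_{x}f$ and the stated formula is immediate.
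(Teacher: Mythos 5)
Your proof is correct and follows essentially the same route as the paper's: integration by parts against $\mathrm{e}^{-U}$ with the smooth cut-off $\vartheta_{n}$, dominated convergence, and the bound (\ref{eq:boundforphinablaU}) (the bounded-$\nabla_{x}U$ case being trivial) to secure $\nabla_{x}U^{\top}F\in\ELL(\pi)$, with (b) obtained by taking $F=\nabla_{x}f$. The only cosmetic difference is that you truncate $F$ and test against $f\in\mathrm{C}_{\mathrm{c}}^{\infty}(\X)$ before extending via the core property, whereas the paper truncates both functions (first for $d=1$, then componentwise); the substance is identical.
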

\begin{proof}
We follow the ideas of \cite{Lorenzi06}. Consider first the case
where $d=1$. Let $f\in\mathrm{H}^{1}(\X)$ and $\vartheta\colon\X\rightarrow[0,1]$
a cut-off function, then we define for $n\in\mathbb{N_{*}}$ and $x\in\X$
$f_{n}(x):=f(x)\vartheta(x/n)$ . Note that
\[
\partial f_{n}(x)=\vartheta(x/n)\partial f(x)+\frac{1}{n}f(x)\partial\vartheta(x/n).
\]
Let $f,g\in\mathsf{H}^{1}(\X)$. Then for any $n\in\mathbb{N}_{*}$,
using integration by parts and noting that $\vartheta(\cdot/n)\equiv0$
on $B_{n}^{\complement}$, we have
\begin{align*}
\int f_{n}(x)\pi(x)\partial g_{n}(x)\,{\rm d}x\\
=-Z^{-1}\int g_{n}(x) & \left[\partial f_{n}(x)-f_{n}(x)\partial U(x)\right]\exp(-U(x))\,{\rm d}x\\
=-\int\vartheta^{2}(x/n) & g(x)\bigl[\partial f(x)-f(x)\partial U(x)+f(x)\frac{\partial\vartheta(x/n)}{n}\bigr]\,\pi({\rm d}x).
\end{align*}
Letting $n\rightarrow\infty$ and applying the dominated convergence
theorem leads to the desired result; the required integrability follows
from applying the Cauchy--Schwarz inequality several times, Assumption
\ref{assu:U}--\ref{enu:nablaU-summable} and that $f(x)\partial U(x)\in\ELL(\pi)$.
Note if $\partial U$ is bounded then it is immediate to see $f(x)\partial U(x)\in\ELL(\pi)$
on the other hand if $\partial U$ is not bounded then Assumption
\ref{assu:U}- \ref{enu:cond-laplacian-U} holds and we use (\ref{eq:boundforphinablaU})
with $\varphi=f$ to show $f(x)\partial U(x)\in\ELL(\pi)$, indeed
this gives 
\[
\lVert f\partial U\rVert_{2}^{2}\leq16\lVert\partial f\rVert_{2}^{2}+4C_{U}d^{1+\omega}\lVert f\rVert_{2}^{2}.
\]
 When $d>1$, simply notice that for $f\in\mathrm{H}^{1}(\X)$ and
$G=(g_{1},\ldots,g_{d})\in\mathrm{H}^{1}(\X,\R^{d})$, $\bigl\langle\nabla_{x}f,G\bigr\rangle_{\pi}=\sum_{i=1}^{d}\bigl\langle\partial_{i}f,g_{i}\bigr\rangle_{\pi}$
and apply the result above. The second statement is immediate.
\end{proof}
\begin{cor}
Let $\L$ be as in (\ref{eq:def-calL}) and assume Assumption \ref{assu:U}
holds. Then for $f\in\mathrm{C}_{b}^{2}(\E)$ and $(x,v)\in\E$,
\begin{align*}
\L^{*}f(x,v) & =-v^{\top}\nabla_{x}f(x,v)+\sum_{k=1}^{K}\lambda(x,-v)({\cal B}_{k}-{\rm Id})f(x,v)+m_{2}^{1/2}\lambda_{\refr}(x)\mathcal{R}_{v}f(x).
\end{align*}
\end{cor}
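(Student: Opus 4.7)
The plan is to compute $\langle \L f, g \rangle_\mu$ for arbitrary $f, g \in \mathrm{C}_b^2(\E)$ by splitting $\L$ into its transport, bounce and refreshment pieces, integrating by parts on each, and then reading off the adjoint. Self-adjointness with respect to $\mu = \pi \otimes \nu$ is the target, and the compactness of the domain $\mathrm{C}_b^2(\E)$ together with the regularity in Assumption~\ref{assu:U} ensures all manipulations are justified.

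First, for the transport term $v^\top \nabla_x f$, I would fix $v$ and integrate by parts in $x$ using Lemma~\ref{lem:nabla-star}(i) applied to the vector field $x \mapsto v\,g(x,v)$. This yields
\[
\langle v^\top \nabla_x f, g \rangle_\mu = -\langle f, v^\top \nabla_x g\rangle_\mu + \langle f, (v^\top \nabla_x U)\,g\rangle_\mu,
\]
since $\mathrm{div}_x(v g) = v^\top \nabla_x g$ for $v$ constant in $x$.

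For the bounce term, the key observations are (a) by Assumption~\ref{assu:V_nu}(\ref{enu:4(b)For-any-}), $\nu$ is invariant under the reflection $R_k(x) = \mathrm{I}_d - 2 \mathrm{n}_k(x) \mathrm{n}_k(x)^\top$, so $\bounce_k$ is self-adjoint for the $\nu$-inner product; and (b) by the explicit form $\lambda_k(x,v) = (v^\top F_k(x))_+$ in Assumption~\ref{assu:event_rate} together with the identity $(R_k(x) v)^\top F_k(x) = -v^\top F_k(x)$, one has $\lambda_k(x, R_k(x) v) = \lambda_k(x, -v)$. Changing variables $w = R_k(x) v$ in the $\nu$-integral then gives
\[
\langle \lambda_k (\bounce_k - \Id) f, g \rangle_\mu = \langle f, \lambda_k(x,-v)\,\bounce_k g - \lambda_k(x,v)\, g \rangle_\mu.
\]
Adding and subtracting $\lambda_k(x,-v)\,g$ inside, and using the identity $\lambda_k(x,v) - \lambda_k(x,-v) = v^\top F_k(x)$, this rearranges to
\[
\langle \lambda_k (\bounce_k - \Id) f, g \rangle_\mu = \langle f, \lambda_k(x,-v)(\bounce_k - \Id) g \rangle_\mu - \langle f, (v^\top F_k)\, g \rangle_\mu.
\]
Summing over $k$ and using $\sum_k F_k = \nabla_x U$ (Assumption~\ref{assu:F_k}), the stray term is $-\langle f, (v^\top \nabla_x U) g\rangle_\mu$, which exactly cancels the second term coming from the transport computation.

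For the refreshment term, $\Pi_v$ is an orthogonal projection on $\ELL(\mu)$ (by Fubini and $\nu$ a probability measure), hence self-adjoint, and $\lambda_{\refr}$ depends only on $x$; so $\langle m_2^{1/2}\lambda_{\refr} \Rv f, g\rangle_\mu = \langle f, m_2^{1/2}\lambda_{\refr} \Rv g\rangle_\mu$ directly. Assembling the three pieces produces precisely the claimed formula for $\L^* g$. The main delicate step is the bounce computation: getting the sign conventions right on $R_k(x) v$, exploiting $\lambda_k(x,\cdot)$'s behaviour under reflection, and verifying the cancellation of the $v^\top \nabla_x U$ boundary-like term against the integration-by-parts remainder — all other steps are routine Fubini and adjoint identities.
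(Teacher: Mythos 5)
Your proposal is correct and follows essentially the same route as the paper: the transport term is handled by Lemma \ref{lem:nabla-star} applied to the vector field $x\mapsto v\,g(x,v)$, while your bounce computation (reflection invariance of $\nu$, the identity $\lambda_k(x,R_k(x)v)=\lambda_k(x,-v)$, and the cancellation of the $v^{\top}\nabla_x U$ term via $\lambda_k(x,v)-\lambda_k(x,-v)=v^{\top}F_k(x)$) together with the self-adjointness of $\Pi_v$ is precisely the calculation the paper delegates to the proof of Proposition 7 of \cite{Andrieu18}. The only difference is that you write that cited calculation out explicitly, which is fine.
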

\begin{proof}
From Lemma \ref{lem:nabla-star}, for $F\in\mathrm{H}^{1}(\X,\R^{d})$
and $g\in{\rm H}^{1}(\pi)$ we have $\bigl\langle\nabla_{x}g,F\bigr\rangle_{\pi}=\langle g,-{\rm div}_{x}F+\nabla_{x}U^{\top}F\rangle_{\pi}$.
We need to check that this is applicable in the case $x\mapsto F_{v}(x):=v\cdot f(x,v)$
for fixed $v\in\V$, provided $F_{v}\in\mathrm{H}^{1}(\mathsf{E},\mathbb{R}^{d})$,
which is clearly true here. This is true for $v=0$. For $v\neq0$,
noting that $\nabla_{x}(v\cdot f)=v(\nabla_{x}f)^{\top}$ we deduce
that $F_{v}\in{\rm H}^{1}(\mathsf{X},\R^{d})$. The rest then follows
from a calculation identical to that in the proof of \cite[Proposition 7]{Andrieu18}. 
\end{proof}

\subsection{Bound on $\protect\|\nabla_{x}^{2}u_{f}\protect\|_{2}$ in Lemma
\ref{lem:Poissonequestimates}\label{appen:pf_lemma}}

 Fix $f\in\mathrm{C}_{b}^{2}(\E)$, and let $u=u_{f}$ be the solution
of the Poisson equation (\ref{eq:Poisson-eq}). Since $\mathrm{C}_{\mathrm{c}}^{\infty}(\X)$
is dense in $\mathrm{H}^{2}(\X)$ (see \cite[Lemma 3.1]{Lorenzi06})
we may assume that $f$ is smooth with compact support. By rescaling
$f$ we may assume that $m_{2}=1.$ Differentiating (\ref{eq:Poisson-eq})
we have for $i\in\{1,\dots,d\}$,
\begin{equation}
\partial_{i}u+\nabla_{x}^{*}\nabla_{x}\partial_{i}u+(\nabla_{x}\partial_{i}U)^{\top}\nabla_{x}u=\partial_{i}f,\label{eq:derivativeofPE-1}
\end{equation}
since from Lemma \ref{lem:Poissonequestimates} $u\in\mathrm{C}^{3}(\X)$
as $U\in\mathrm{C}^{2+\alpha}(\X)$. Let $\vartheta\colon\X\rightarrow[0,1]$
be a cut-off function. Define $\vartheta_{n}(x):=\vartheta(x/n)$
for $(n,x)\in\mathbb{N_{*}\times\X}$ and note that $\sup_{x\in B_{n}}|\nabla_{x}\vartheta_{n}(x)|\leq C_{1}n^{-1}$
with $C_{1}:=\sup_{x\in B_{1}}|\nabla_{x}\vartheta(x)|$. Now let
$n\geq n_{0}$ for $n_{0}\in\mathbb{N}_{*}$ such that $\vartheta_{n_{0}}\equiv1$
on the support of $f$. Throughout this section, we will write $\langle\cdot,\cdot\rangle_{\pi}$
as shorthand for $\langle\cdot,\cdot\rangle_{\ELL(\pi)}$. Now multiply
(\ref{eq:derivativeofPE-1}) by $\vartheta_{n}^{2}\partial_{i}u$,
sum over $i\in\{1,\ldots,d\}$ and integrate to get
\begin{multline}
\left\langle \vartheta_{n}^{2}\nabla_{x}u,\nabla_{x}u\right\rangle _{\pi}+\sum_{i=1}^{d}\left(\left\langle \vartheta_{n}^{2}\partial_{i}u,\nabla_{x}^{*}\nabla_{x}\partial_{i}u\right\rangle _{\pi}+\left\langle \vartheta_{n}^{2}\partial_{i}u,(\nabla_{x}\partial_{i}U)^{\top}\nabla_{x}u\right\rangle _{\pi}\right)\\
=\left\langle \vartheta_{n}^{2}\nabla_{x}u,\nabla_{x}f\right\rangle _{\pi}.\label{eq:derivativeofPE}
\end{multline}
Now since $\vartheta_{n}\equiv1$ on the support of $f$ and from
Lemma \ref{lem:nabla-star}, we deduce $\left\langle \vartheta_{n}^{2}\nabla_{x}u,\nabla_{x}f\right\rangle _{\pi}=\left\langle \nabla_{x}^{*}\nabla_{x}u,f\right\rangle _{\pi}$
and therefore
\begin{multline}
\|\vartheta_{n}\nabla_{x}u\|_{2}^{2}+\sum_{i=1}^{d}\left(\left\langle \vartheta_{n}^{2}\partial_{i}u,\nabla_{x}^{*}\nabla_{x}\partial_{i}u\right\rangle _{\pi}+\left\langle \vartheta_{n}^{2}\partial_{i}u,(\nabla_{x}\partial_{i}U)^{\top}\nabla_{x}u\right\rangle _{\pi}\right)\\
=\left\langle \nabla_{x}^{*}\nabla_{x}u,f\right\rangle _{\pi}.\label{eq:3termsPE}
\end{multline}
Now consider the second term on the left hand side of (\ref{eq:3termsPE}),
\begin{align*}
\sum_{i=1}^{d}\left\langle \vartheta_{n}^{2}\partial_{i}u,\nabla_{x}^{*}\nabla_{x}\partial_{i}u\right\rangle _{\pi} & =\sum_{i=1}^{d}\left\langle \nabla_{x}(\vartheta_{n}^{2}\partial_{i}u),\nabla_{x}\partial_{i}u\right\rangle _{\pi}\\
 & =\|\vartheta_{n}\nabla_{x}^{2}u\|_{2}^{2}+2\sum_{i,j=1}^{d}\int\vartheta_{n}\partial_{j}\vartheta_{n}\,\partial_{i}u\,\partial_{ij}u\,\dif\pi.
\end{align*}
Therefore (\ref{eq:3termsPE}) becomes 
\[
\|\vartheta_{n}\nabla_{x}u\|_{2}^{2}+\|\vartheta_{n}\nabla_{x}^{2}u\|_{2}^{2}=I_{1}+I_{2}+I_{3},
\]
with
\begin{gather}
I_{1}:=\left\langle \nabla_{x}^{*}\nabla_{x}u,f\right\rangle _{\pi},I_{2}:=-2\left\langle \vartheta_{n}\nabla_{x}\vartheta_{n},\nabla_{x}^{2}u\nabla_{x}u\right\rangle _{\pi}\nonumber \\
I_{3}:=-\left\langle \vartheta_{n}\nabla u,\nabla_{x}^{2}U\,(\vartheta_{n}\nabla_{x}u)\right\rangle _{\pi}.\label{eq:I1I2I3}
\end{gather}
It remains to estimate each of the terms $I_{1},I_{2},I_{3}$. For
the first term $I_{1}$, recall that $\nabla_{x}^{*}\nabla_{x}u=f-u$
and $\lVert u\rVert_{2}\leq\lVert f\rVert_{2}$ by (\ref{eq:derPE})
so we have
\begin{equation}
\bigl|\left\langle \nabla_{x}^{*}\nabla_{x}u,f\right\rangle _{\pi}\bigr|\leq\lVert\nabla_{x}^{*}\nabla_{x}u\rVert_{2}\lVert f\rVert_{2}\leq\lVert f-u\rVert_{2}\lVert f\rVert_{2}\leq2\lVert f\rVert_{2}^{2}.\label{eq:boundforI1}
\end{equation}
For the third term $I_{3}$ we use that $\nabla_{x}^{2}U\succeq-c_{U}\mathrm{I}_{d}$
by Assumption \ref{assu:U} for some $c_{U}\geq0$, $\vartheta_{n}\leq1$
and that $\lVert\nabla_{x}u\rVert_{2}\leq\lVert f\rVert_{2}$ by (\ref{eq:derPE}),
\begin{equation}
\bigl|\left\langle \vartheta_{n}\nabla_{x}u,\nabla_{x}^{2}U\,(\vartheta_{n}\nabla_{x}u)\right\rangle \bigr|\leq c_{U}\|\vartheta_{n}\nabla_{x}u\|_{2}^{2}\leq c_{U}\lVert f\rVert_{2}^{2}.\label{eq:boundforI3}
\end{equation}
For the second term $I_{2}$, we use that $|\nabla_{x}\vartheta_{n}(x)|\leq C_{1}n^{-1}$
then
\begin{align*}
\bigl|\left\langle \vartheta_{n}\nabla_{x}\vartheta_{n},\nabla_{x}^{2}u\,\nabla_{x}u\right\rangle _{\pi}\bigr| & \leq C_{1}n^{-1}\int\vartheta_{n}\lvert\nabla_{x}^{2}u\rvert\lvert\nabla_{x}u\rvert\,\dif\pi.
\end{align*}
Now Young's inequality for any $\varepsilon>0$ and (\ref{eq:derPE})
yield
\begin{align}
\bigl|\left\langle \vartheta_{n}\nabla_{x}\vartheta_{n},\nabla_{x}^{2}u\,\nabla_{x}u\right\rangle _{\pi}\bigr|\leq & \varepsilon\int\vartheta_{n}^{2}\lvert\nabla_{x}^{2}u\rvert^{2}\,\dif\pi+\frac{C_{1}^{2}n^{-2}}{4\varepsilon}\int\lvert\nabla_{x}u\rvert^{2}\,\dif\pi\label{eq:boundforI2}\\
\leq & \varepsilon\|\vartheta_{n}\nabla_{x}^{2}u\|_{2}^{2}+\frac{C_{1}^{2}n^{-2}}{4\varepsilon}\lVert f\rVert_{2}^{2}.
\end{align}
Combining (\ref{eq:I1I2I3}), (\ref{eq:boundforI1}), (\ref{eq:boundforI3}),
(\ref{eq:boundforI2}) we have
\[
\|\vartheta_{n}\nabla_{x}u\|_{\pi}^{2}+(1-\varepsilon)\|\vartheta_{n}\nabla_{x}^{2}u\|_{2}^{2}\leq\left(2+c_{U}+\frac{C_{1}^{2}n^{-2}}{4\varepsilon}\right)\lVert f\rVert_{2}^{2}.
\]
Finally taking $\varepsilon=1/4$ we obtain
\[
\|\vartheta_{n}\nabla_{x}^{2}u\|_{2}^{2}\leq2(2+c_{U}+C_{1}^{2}n^{-2})\lVert f\rVert_{2}^{2}.
\]
The result follows by choosing $\vartheta$ as in Remark \ref{rem:cut-off-decreasing-away-origin},
letting $n\rightarrow\infty$ and invoking the monotone convergence
theorem.

\section*{Acknowledgements}

We would like to thank the Heilbronn Institute for Mathematical Research
Research for funding the Hypocoercivity Workshop held at the University
of Bristol, March 2020, which initiated this research. We would like
to thank Joris Bierkens, Anthony Lee and Sam Power for interesting
discussions related to this work. Research of CA supported by EPSRC
grants Bayes4Health, `New Approaches to Bayesian Data Science: Tackling
Challenges from the Health Sciences' (EP/R018561/1) and `CoSInES (COmputational
Statistical INference for Engineering and Security)' (EP/R034710/1).
Research of PD supported by the research programme `Zigzagging through
computational barriers' with project number 016.Vidi.189.043, financed
by the Dutch Research Council (NWO). Research of AQW supported by
EPSRC grant CoSInES (EP/R034710/1).

\bibliographystyle{plain}
\bibliography{bib-poly}

\end{document}